\newtheorem{defi}{Definition}[section]
\newtheorem{theo}[defi]{Theorem}
\newtheorem{lem}[defi]{Lemma}
\newtheorem{cor}[defi]{Corollary}
\newtheorem{rem}[defi]{Remark}
\newtheorem{prop}[defi]{Proposition}
\newtheorem{example}[defi]{Example}
\newcommand{\C}{\ensuremath{\mathcal{C}}\xspace}
\newcommand{\SC}{\ensuremath{\mathcal{S}_{\C}}\xspace}
\newcommand{\cP}{\ensuremath{\mathcal{P}}\xspace}
\newcommand{\Z}{\ensuremath{\mathcal{Z}}\xspace}
\newcommand{\X}{\ensuremath{\mathcal{X}}\xspace}
\def\cal{\mathcal}
\newcommand{\s}{\ensuremath{{\mathcal S}}\xspace}
\newcommand{\si}{\ensuremath{\sigma}\xspace}
\newcommand{\wc}{\ensuremath{{\hat{\mathcal{C}}}}\xspace}
\newcommand{\Wc}{\ensuremath{{\hat{\mathcal{C}}}}\xspace}
\newcommand{\params}[1]{$($#1\rotatebox[origin=c]{180}{$($}}
\newcommand{\calA}{{\mathcal A}}
\newcommand{\cA}{\ensuremath{\mathcal{A}}\xspace}
\newcommand{\Sys}{\ensuremath{\mathcal{E}}\xspace}
\newcommand{\D}{\ensuremath{{\mathcal D}}\xspace}
\newcommand{\E}{\ensuremath{{\mathcal E}}\xspace}
\newcommand{\Bstar}{\ensuremath{{B^\star}}\xspace}
\newcommand{\rs}{restriction\xspace}
\newcommand{\rss}{restrictions\xspace}
\newcommand{\rt}{restriction term\xspace}
\newcommand{\rts}{restriction terms\xspace}
\newcommand{\ssi}{if and only if\xspace}
\def\@tocline#1#2#3#4#5#6#7{\relax
  \ifnum #1>\c@tocdepth 
  \else
    \par \addpenalty\@secpenalty\addvspace{#2}%
    \begingroup \hyphenpenalty\@M
    \@ifempty{#4}{%
      \@tempdima\csname r@tocindent\number#1\endcsname\relax
    }{%
      \@tempdima#4\relax
    }%
    \parindent\z@ \leftskip#3\relax \advance\leftskip\@tempdima\relax
    \rightskip\@pnumwidth plus4em \parfillskip-\@pnumwidth
    #5\leavevmode\hskip-\@tempdima #6\nobreak\relax
    \dotfill\hbox to\@pnumwidth{\@tocpagenum{#7}}\par 
    \nobreak
    \endgroup
  \fi}
\title{An algorithm computing combinatorial specifications of permutation classes}
\author{Frédérique Bassino}
       \address{Université Paris 13, Sorbonne Paris Cité, LIPN, CNRS UMR 7030, F-93430 Villetaneuse, France}
       \email{bassino@lipn.univ-paris13.fr}
 \author{Mathilde Bouvel}
       \address{Institut für Mathematik, Universität Zürich, Winterthurerstr. 190, CH-8057 Zürich, Switzerland}
       \email{mathilde.bouvel@math.uzh.ch}
 \author{Adeline Pierrot}
 \address{LRI, Université Paris-Sud, Bat. 650 Ada Lovelace, 91405 Orsay Cedex, France}
       \email{adeline.pierrot@lri.fr}
\author{Carine Pivoteau}
        \address{LIGM UMR 8049, Universit\'e Paris Est and CNRS, Marne-la-Vall\'ee, France}
       \email{pivoteau@univ-mlv.fr}
\author{Dominique Rossin}
\address{LIX UMR 7161, \'Ecole Polytechnique and CNRS, Palaiseau, France}
      \email{dominique.rossin@lix.polytechnique.fr}
\keywords{Permutation; Pattern avoidance; Permutation class; Combinatorial specification; Generating function; Random sampler}
\begin{document}
\begin{abstract}
This article presents a methodology that automatically derives a combinatorial specification for a permutation class~$\mathcal{C}$, given its basis~$B$ of excluded patterns and the set of simple permutations in~$\mathcal{C}$, when these sets are both finite.
This is achieved considering both pattern avoidance and pattern containment constraints in permutations.
The obtained specification yields a system of equations satisfied by the generating function of~$\mathcal{C}$, this system being always positive and algebraic. 
It also yields a uniform random sampler of permutations in~$\mathcal{C}$. 
The method presented is fully algorithmic.
\end{abstract}

\maketitle

\tableofcontents

\section{Introduction}

Permutation classes (and the underlying pattern order on permutations) were defined in the seventies, 
and since then the enumeration of specific permutation classes (\emph{i.e.}, sets of permutations closed under taking patterns) 
has received a lot of attention. 
In this context, as in many in combinatorics, a recursive description of the permutations belonging to the class 
is often the key towards their enumeration. 
This recursive description is \emph{a priori} specific to the class studied. 
But more recently, the substitution decomposition (along with other general frameworks, see~\cite[and references therein]{Vatter2014}) 
has been introduced for the study of permutation classes: 
it provides a general and systematic approach to their study, with a recursive point of view. 
This tool has already proved useful in solving many enumerative problems~\cite[among others]{AA05,AlAtBr11,AlBr14,AtSaVa12}, 
but also in other areas like algorithmics~\cite{BBCP07,br06}.

The goal of the current paper is to systematize even more the use of substitution decomposition 
for describing recursively and enumerating permutation classes. 
Our main result is an algorithm that computes a combinatorial specification (in the sense of Flajolet and Sedgewick~\cite{FlSe09}) 
for any permutation class containing finitely many simple permutations. 
Note that this problem has been addressed already in~\cite{AA05,BHV08a}, however with much less focus on the algorithmic side. 
Moreover, we introduce in this article a generalization of permutation classes that we call restrictions:
while every permutation class is characterized by a set of forbidden patterns, 
a restriction is described giving a set of forbidden patterns and a set of mandatory patterns.
Our algorithm also allows to compute a specification for restrictions containing finitely many simple permutations.

The article is organized as follows. 
We start by recalling the necessary background in Section~\ref{sec:PermClasses}: 
permutation classes, substitution decomposition, and the symbolic method. 
Section~\ref{sec:detailed_intro} gives a more detailed presentation of our results. 
Here, we dedicate specific attention to explaining the differences between our work and those of~\cite{AA05,BHV08a}, 
and to putting our result in a more global algorithmic context 
(namely, we describe an algorithmic chain from the basis $B$ of a class~$\C$ to random sampling of permutations in~$\C$). 
With the next sections, we enter the technical part of our work. 
After briefly solving the case of substitution-closed classes in Section~\ref{sec:substitution-closed}, 
we explain in two steps how to obtain a combinatorial specification for other classes~$\C$. 
Section~\ref{sec:ambiguous} gives an algorithm producing an ambiguous system of combinatorial equations describing~$\C$.
Next, Section~\ref{sec:disambiguation} describes how to adapt this algorithm to obtain a combinatorial specification for~$\C$. 
Finally, Section~\ref{sec:ex} illustrates the whole process on examples.

\section{Some background on permutations and combinatorial specifications}
\label{sec:PermClasses}

\subsection{Permutation patterns and permutation classes}

A permutation $\sigma$ of size $|\sigma| = n$ is a bijective map from $[1..n] = \{1,\ldots ,n\}$ to itself.
We represent a permutation by a word $\sigma= \sigma_1 \sigma_2 \ldots \sigma_n$, 
where each letter $\sigma_i$ denotes the image of $i$ under $\sigma$. 
We denote $\varepsilon$ the only permutation of size $0$; 
$\varepsilon$ is also called the empty permutation.

\begin{defi}
\label{def:subsequence}
For any sequence $s$ of $k$ distinct integers, the \emph{normalization} of $s$ is the permutation $\pi$ of size $k$ which is order-isomorphic to $s$, 
\emph{i.e.}, $s_{\ell} < s_{m}$ whenever $\pi_{\ell} < \pi_{m}$.

For any permutation $\sigma$ of size $n$, and any subset $I = \{i_1,\ldots, i_k\}$ of $\{1, \ldots, n\}$ with $i_1 < \ldots < i_k$, 
$\sigma_I$ denotes the permutation of size $k$ obtained by normalization of the sequence $\sigma_{i_1} \ldots \sigma_{i_k}$. 
\end{defi}

\begin{defi}
A permutation $\pi$ is a \emph{pattern} of a permutation $\sigma$ if and only if 
there exists a subset $I$ of $\{1, \ldots, |\sigma|\}$ such that $\sigma_I = \pi$. 
We also say that $\sigma$ \emph{contains} or \emph{involves} $\pi$, and we write $\pi \preccurlyeq \sigma$.
A permutation $\sigma$ that does not contain $\pi$ as a pattern is said to {\em avoid} $\pi$.
\end{defi}

\begin{example}
The permutation $\sigma=316452$ contains the pattern $2431$ whose occurrences are $3642$ and $3652$.
But $\sigma$ avoids the pattern $2413$ as none of its subsequences of length $4$ is order-isomorphic to $2413$.
\end{example}

The pattern containment relation $\preccurlyeq$ is a partial order on permutations,
and permutation classes are downsets under this order.
In other words:
\begin{defi}
A set $\C$ of permutations is a \emph{permutation class} if and only
if for any $\sigma \in \C$, if $\pi \preccurlyeq \sigma$, then we also have $\pi \in \C$.
\end{defi}

Throughout this article, we take the convention that a permutation class only contains permutations of size $n \geq 1$, 
\emph{i.e.}, $\varepsilon \notin \C$ for any permutation class $\C$. 

\medskip

Every permutation class $\C$ can be characterized by a unique antichain $B$
({\em i.e.,} a unique set of pairwise incomparable elements)
such that a permutation $\sigma$ belongs to $\C$ if and only if it avoids every pattern in $B$ (see for example~\cite{AA05}).
The antichain $B$ is called the {\it basis} of $\C$, and we write $\C = Av(B)$.
The basis of a class $\C$ may be finite or infinite;
it is described as the permutations that do not belong to $\C$ and that are minimal in the sense of $\preccurlyeq$ for this criterion.

\subsection{Simple permutations and substitution decomposition of permutations}

The description of permutations in the framework of constructible structures (see Section~\ref{ssec:ConstructibleClasses}) that will be used in this article 
relies on the substitution decomposition of permutations.
Substitution decomposition is a general method,
adapted to various families of discrete objects~\cite{MoRa84},
that is based on core items and relations, and in which every object can be recursively decomposed into core objects using relations.
In the case of permutations, the core elements are simple permutations and the relations are substitutions. 

\begin{defi}
An \emph{interval} of a permutation $\sigma$ of size $n$ is a non-empty subset $\{i,\ldots ,(i+\ell-1)\}$ of consecutive integers of $\{1,\ldots ,n\}$
whose images by $\sigma$ also form a set of consecutive integers. 
The \emph{trivial} intervals of $\sigma$ are $\{1\}, \ldots, \{n\}$ and $\{1,\ldots, n\}$. 
The other intervals of $\sigma$ are called \emph{proper}. 
\label{defn:interval}
\end{defi}

\begin{defi}
\label{def:block}
A \emph{block} (resp.\ \emph{normalized block}) of a permutation $\sigma$ is 
any sequence $\sigma_{i_1} \ldots \sigma_{i_m}$ (resp.\ any permutation $\sigma_I$) for $I=\{i_1, \ldots, i_m\}$ an interval of $\sigma$. 
\end{defi}

\begin{defi}
A permutation $\sigma$ is \emph{simple} when it is of size at least $4$ and it contains no interval,
except the trivial ones. 
\label{defn:simplepermutation}
\end{defi}

Note that no permutation of size $3$ has only trivial intervals (so that the condition on the size is equivalent to ``at least $3$'').

\begin{rem}
The permutations $1$, $12$ and $21$ also have only trivial intervals, 
and are considered simple in many articles. 
Nevertheless, for our notational convenience in this work, we prefer to consider that they are \emph{not} simple. 
\end{rem}

For a detailed study of simple permutations, in particular from an enumerative point of view, we refer the reader to~\cite{AA05,AAK03,Bri08}.
Let us only mention that the number of simple permutations of size $n$ is asymptotically equivalent to $\frac{n!}{e^2}$ as $n$ grows.

\medskip

Let $\sigma$ be a permutation of size $n$ and $\pi^{1},\ldots, \pi^{n}$ be $n$ permutations of size $p_1, \ldots, p_n$ respectively.
Define the \emph{substitution} $\sigma[\pi^{1}, \pi^{2} ,\ldots, \pi^{n}]$ of $\pi^{1},\pi^{2} , \ldots, \pi^{n}$ in $\sigma$ (also called \emph{inflation} in~\cite{AA05})
to be the permutation obtained by concatenation of $n$ sequences of integers $S^1, \ldots , S^n$ from left to right,
such that for every $i,j$, the integers of $S^i$ form a block, 
are ordered in a sequence order-isomorphic to $\pi^{i}$,
and $S^i$ consists of integers smaller than $S^j$ if and only if $\sigma_i < \sigma_j$. 
The interested reader may find a formal definition in~\cite[Definition 0.25]{theseAdeline}.
When a permutation $\tau$ may be written as $\tau = \sigma[\pi^{1}, \pi^{2} ,\ldots, \pi^{n}]$, 
we also say that $\sigma[\pi^{1}, \pi^{2} ,\ldots, \pi^{n}]$ provides a {\em block decomposition} of $\tau$.

\begin{example}
The substitution $ 1\, 3\, 2 [2\, 1, 1\, 3\, 2, 1]$ gives the permutation $ 2\, 1\, \, 4\, 6\, 5\,  \, 3$. 
In particular,  $ 1\, 3\, 2 [2\, 1, 1\, 3\, 2, 1]$ is a block decomposition of $2\, 1\, 4\, 6\, 5\, 3$. 
\end{example}

When substituting in $\sigma=12$ or $21$, we often use $\oplus$ (resp.\ $\ominus$) to denote the permutation $12$ (resp.\ $21$). 

\begin{defi}
  A permutation $\pi$ is \emph{$\oplus$-indecomposable}
  (resp.\ \emph{$\ominus$-indecomposable}) if it cannot be written as
  $\oplus[\pi^{1},\pi^{2}]$ (resp.\ $\ominus[\pi^{1},\pi^{2}]$).
\end{defi}

Simple permutations, together with $\oplus$ and $\ominus$, 
are enough to describe all permutations through their \emph{substitution decomposition}:

\begin{theo}[Proposition 2 of~\cite{AA05}]
Every permutation $\pi$ of size $n$ with
  $n \geq 2$ can be uniquely decomposed as either:
\begin{itemize}
\item $\oplus[\pi^{1},\pi^{2}]$, with $\pi^{1}$ $\oplus$-indecomposable,
\item $\ominus[\pi^{1},\pi^{2}]$, with $\pi^{1}$ $\ominus$-indecomposable,
\item $\sigma[\pi^{1},\pi^{2},\ldots,\pi^{k}]$ with $\sigma$ a simple permutation of size $k$.
\end{itemize}
\label{thm:decomp_perm_AA05}
\end{theo}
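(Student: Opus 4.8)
The plan is to prove Theorem~\ref{thm:decomp_perm_AA05} in two stages: first establish that at least one of the three decompositions exists for every $\pi$ with $|\pi| \geq 2$, and then establish uniqueness by showing the three cases are mutually exclusive and that within each case the components are determined.

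\textbf{Existence.} Let $\pi$ be a permutation of size $n \geq 2$. If $\pi$ has no proper interval, then either $n \in \{2,3\}$ or $\pi$ is simple; in the former case $\pi \in \{12, 21\}$, which is written as $\oplus[1,1]$ or $\ominus[1,1]$ (and one checks $1$ is both $\oplus$- and $\ominus$-indecomposable, being of size $1$), and for $n = 3$ one uses the remark that every permutation of size $3$ does have a proper interval, so this subcase does not actually arise; in the latter case we are in the third bullet with $k = n$ and all $\pi^i = 1$. Now suppose $\pi$ has a proper interval. The key structural fact I would invoke (or prove) is that the maximal proper intervals of $\pi$ whose union is all of $[1..n]$ partition $[1..n]$ into blocks, and the normalization $\sigma$ of the ``quotient'' — the permutation on these blocks — is either $\oplus$, $\ominus$, or a simple permutation of size $k \geq 4$. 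This is the substitution-decomposition analogue of the modular decomposition theorem for graphs. Concretely: call an interval $I$ \emph{strong} if it does not overlap any other interval; one shows the inclusion-maximal proper strong intervals partition $[1..n]$, write $\pi = \sigma[\pi^1,\dots,\pi^k]$ for the resulting quotient $\sigma$, and argue $\sigma$ itself has no proper interval (otherwise it would lift to a strong interval of $\pi$ strictly between the blocks and the whole, contradicting maximality). Hence $\sigma$ is $12$, $21$, or simple. If $\sigma = 12 = \oplus$, then $\pi = \oplus[\pi^1, \pi^2]$; if $\pi^1$ is itself $\oplus[\rho^1,\rho^2]$ we can re-associate, so by replacing $\pi^1$ with the maximal such $\oplus$-indecomposable left factor (equivalently, taking $\pi^1$ to be the normalization of the \emph{first} block in the finest left-to-right sum decomposition) we get $\pi^1$ $\oplus$-indecomposable; symmetrically for $\ominus$.

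\textbf{Uniqueness.} The three cases are mutually exclusive: a permutation cannot be simultaneously of the form $\oplus[\cdot,\cdot]$ and $\ominus[\cdot,\cdot]$ (the top-left-to-bottom-right versus bottom-left-to-top-right structure of the two outermost blocks is incompatible once $n \geq 2$), and neither $\oplus$ nor $\ominus$ is simple, so the quotient in the third case being simple of size $k \geq 4$ rules out the first two. Within the $\oplus$ case, if $\pi = \oplus[\pi^1,\pi^2] = \oplus[\rho^1,\rho^2]$ with $\pi^1, \rho^1$ both $\oplus$-indecomposable, one shows $|\pi^1| = |\rho^1|$ by a minimality argument (if, say, $|\pi^1| < |\rho^1|$, then $\rho^1$ would split as a sum at position $|\pi^1|$, contradicting its $\oplus$-indecomposability), hence $\pi^1 = \rho^1$ and $\pi^2 = \rho^2$; the $\ominus$ case is symmetric. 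For the simple case, uniqueness of $\sigma$ and of the $\pi^i$ follows from uniqueness of the maximal-strong-interval partition: one argues that in $\pi = \sigma[\pi^1,\dots,\pi^k]$ with $\sigma$ simple, the blocks $S^1,\dots,S^k$ are \emph{exactly} the inclusion-maximal proper intervals of $\pi$ (any proper interval of $\pi$ is either contained in some $S^i$, or is a union of consecutive $S^i$'s corresponding to an interval of $\sigma$ — but $\sigma$ simple has none nontrivial), so the partition, and therefore $\sigma$ and each $\pi^i = \sigma_{S^i}$ up to normalization, is forced.

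\textbf{Main obstacle.} The crux — and the only part requiring genuine care rather than bookkeeping — is the structural lemma that the maximal proper strong intervals partition $[1..n]$ and yield a quotient with no proper interval. The subtlety is handling \emph{overlapping} intervals: unlike in the rooted/linear-order setting, two intervals of a permutation can overlap, and one must check that when they do, their union and (nonempty) intersection and symmetric differences are again intervals, so that the overlap classes behave well. Once that ``intersecting family'' lemma is in hand, the partition claim and the simplicity/($\oplus$/$\ominus$)-trichotomy of the quotient follow cleanly, and the rest of the argument is routine. I would either cite this from~\cite{AA05} directly (since the statement is quoted as their Proposition~2) or reprove it via the overlap-closure argument sketched above.
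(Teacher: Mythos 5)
The paper offers no proof of this statement --- it is imported verbatim as Proposition~2 of~\cite{AA05} --- so your proposal can only be measured against the standard argument. Your overall architecture (existence via a quotient-by-maximal-blocks construction plus re-association in the linear cases; uniqueness via mutual exclusivity of the three shapes, a minimality argument pinning down the first summand, and identification of the blocks with the maximal proper intervals when the skeleton is simple) is the right one, and the uniqueness half is essentially sound.

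The gap is in the ``key structural fact'' on which you build existence: it is false as stated. You claim that the quotient $\sigma$ of $\pi$ by its inclusion-maximal proper strong intervals has no proper interval, hence is $12$, $21$, or simple, justified by ``otherwise it would lift to a strong interval of $\pi$ strictly between the blocks and the whole, contradicting maximality.'' Take $\pi = 123$ (or $12\cdots k$ for any $k \geq 3$): the maximal proper strong intervals are the singletons $\{1\},\{2\},\{3\}$ (since $\{1,2\}$ and $\{2,3\}$ overlap, neither is strong), so the quotient is $123$ itself, which has the proper interval $\{1,2\}$; that interval lifts to the interval $\{1,2\}$ of $\pi$, which is indeed strictly between a block and the whole but is \emph{not strong}, so maximality of the strong blocks is not contradicted. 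The correct trichotomy for the strong-interval quotient is increasing $12\cdots k$, decreasing $k\cdots 21$, or simple of size at least $4$, and in the linear cases $k$ may exceed $2$, so your re-association step is not an optional clean-up but carries the whole burden of those cases. A repair that avoids strong intervals entirely: first dispose of $\oplus$-decomposable $\pi$ by taking the minimal $j<n$ with $\pi(\{1,\ldots,j\})=\{1,\ldots,j\}$ (minimality gives both the $\oplus$-indecomposability of $\pi^{1}$ and uniqueness), symmetrically for $\ominus$; then, for $\pi$ neither $\oplus$- nor $\ominus$-decomposable, show that two overlapping proper intervals with union $\{1,\ldots,n\}$ would force $\pi$ to map an initial segment of positions onto an initial or final segment of values (contradiction), deduce that the inclusion-maximal proper intervals are pairwise disjoint and partition $\{1,\ldots,n\}$, and only then conclude that the quotient is simple of size at least $4$. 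With that substitution your existence argument closes and coincides with the proof in~\cite{AA05}.
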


\begin{rem}
The simple permutation $\sigma$ in the third item of Theorem~\ref{thm:decomp_perm_AA05} is a pattern of the permutation $\pi$.
Hence, as soon as $\pi$ belongs to some permutation class $\C$, then so does $\sigma$.
\label{rem:simple_pattern}
\end{rem}

Theorem~\ref{thm:decomp_perm_AA05} provides the first step in the decomposition of a permutation $\pi$.
To obtain its full decomposition, we can recursively decompose the permutations $\pi^{i}$ in the same fashion,
until we reach permutations of size $1$.
This recursive decomposition can naturally be represented by a tree,
that is called the substitution decomposition tree (or {\em decomposition tree} for short) of $\pi$.

\begin{defi}
\label{defn:deccompositionTrees}
The \emph{substitution decomposition tree} $T$ of a permutation $\pi$ is
the unique ordered tree encoding the substitution decomposition of $\pi$,
where each internal node is either labeled by $\oplus,\ominus$ -- those nodes are called {\em linear} --
or by a simple permutation $\sigma$ -- {\em prime} nodes.
\end{defi}

Note that in decomposition trees, 
linear nodes are always binary, and the left child of a node labeled by $\oplus$ (resp.\ $\ominus$) 
may not be labeled  $\oplus$ (resp.\ $\ominus$),
since $\pi^{1}$ is $\oplus$-indecomposable (resp.\ $\ominus$-indecomposable)
in the first (resp.\ second) item of Theorem~\ref{thm:decomp_perm_AA05}. 

\begin{example}
The permutation $\pi = 6\,9\,8\,7\,3\,11\,5\,4\,10\,17\,1\,2\,14\,16\,13\,15\,12$ can be recursively decomposed as 
\begin{eqnarray*}
        \pi     & =& 2413[476519328,1,12,35241]\\ 
                & =&2413[31524[\oplus[1,\ominus[1,\ominus[1,1]]],1,1,\ominus[1,1],1]],1,\oplus[1,1],\ominus[2413[1,1,1,1],1]]
\end{eqnarray*}
and its decomposition tree is given in Figure~\ref{fig:tree}.
\end{example}

\begin{figure}[htbp]
\begin{center}
\begin{tikzpicture}[
    level/.style={sibling distance=20mm/#1},
   edge from parent/.style={very thick,draw=black!70},
    simple/.style={rectangle, draw=none, rounded corners=1mm, fill=white, text centered, text=black,anchor=north,inner sep=5pt},
    linear/.style={circle, draw=none, fill=white, text centered, anchor=north, text=black,inner sep=2pt},
    every node/.style={circle, draw=none, fill=black, text centered, anchor=north, text=white,inner sep=0},
    level distance=6mm
]
\node[simple] {$2\,4\,1\,3$}
	child { node[simple] {$3\,1\,5\,2\,4$}
		child {node[linear] {$\oplus$}
			child { node { ~ }}
			child{node[linear] {$\ominus$}
				child { node { ~ }}	
				child{node[linear] {$\ominus$}
				child { node { ~ }}	
				child { node { ~ }}
				}
			}
		}
		child { node { ~ }}
		child { node { ~ }}
		child{node[linear] {$\ominus$}
			child { node { ~ }}	
			child { node { ~ }}	
		}
		child { node { ~ }}
	}
	child {node { ~ }}
	child {node[linear] {$\oplus$}
		child { node { ~ }}
		child { node { ~ }}
	}
	child {node[linear] {$\ominus$}
	   child {node[simple] {$2\,4\,1\,3$}
			child { node { ~ }}
			child { node { ~ }}
			child { node { ~ }}
			child { node { ~ }}
		}
		child {node { ~ }}
};
\end{tikzpicture}
\caption{Decomposition tree of $\pi = 6\ 9\ 8\ 7\ 3\ 11\ 5\ 4\ 10\ 17\ 1\ 2\ 14\ 16\ 13\ 15\ 12$.}\label{fig:tree}
\end{center}
\end{figure}

\begin{defi}
The substitution closure $\hat{\C}$ of a permutation class $\C$ is defined as
$\cup_{k\geq 1} \C^k$ where $\C^1 = \C$ and 
$\C^{k+1} = \{\sigma[\pi^{1}, \ldots , \pi^{n}] \mid \sigma \in \C \textrm{ and } \pi^{i} \in \C^k \textrm{ for any } i \textrm{ from } 1 \textrm{ to } n = |\sigma| \}$.
\end{defi}

Because simple permutations contain no proper intervals, we have:
\begin{rem}
For any class $\C$, the simple permutations in $\wc$ are exactly the simple permutations in $\C$.
\label{rem:simple_in_WC}
\end{rem}

Consequently, for any permutation class $\C$,
this allows to describe $\wc$ as the class of all permutations whose decomposition trees can be built on the set of nodes $\{\oplus,\ominus\}\cup \SC$,
where $\SC$ denotes the set of simple permutations in $\C$
(if $12$ and $21$ belong to $\C$; otherwise we have to remove $\oplus$ or $\ominus$ from the set of nodes).

\begin{defi}
A permutation class $\C$ is \emph{substitution-closed} if $\C = \hat{\C}$,
or equivalently if for every permutation $\sigma$ of $\C$,
and every permutations $\pi^{1}, \pi^{2}, \ldots  ,\pi^{n}$ of $\C$ (with $n= |\sigma|$),
the permutation $\sigma[\pi^{1}, \pi^{2} ,\ldots, \pi^{n}]$ also belongs to $\C$.
\end{defi}

Like before, a substitution-closed permutation class can therefore be seen as the set of decomposition trees built on the set of nodes
$\{\oplus,\ominus\}\cup \SC$ (if $12$ and $21$ belong to $\C$; but otherwise $\C$ is trivial and has at most one permutation of each size).

\begin{rem}
In~\cite{AA05}, it is proven that substitution-closed permutation classes can be characterized as the permutation classes $Av(B)$
whose basis $B$ contains only simple permutations (or maybe $12$ or $21$ for trivial classes).
\label{rem:characterization_substitution_closed}
\end{rem}

\subsection{From combinatorial specifications to generating functions and random samplers}
\label{ssec:ConstructibleClasses}

Let us leave aside permutations for now, and review some basics of the symbolic method about constructible structures and their description by combinatorial specifications. 
We will see in Theorem~\ref{thm:systemeNonAmbigu} (p.\pageref{thm:systemeNonAmbigu}) that the classes of permutations 
we are interested in fit in this general framework.

A class $\C$ of combinatorial structures is a set of discrete objects equipped with a notion of \emph{size}: 
the size is a function of $\C \rightarrow \mathbb{N}$ denoted $|\cdot|$ such that 
for any $n$ the number of objects of size $n$ in $\C$ is finite. 

Among the combinatorial structures, we focus on {\em constructible} ones, from the framework introduced in~\cite{FlSe09}. 
Basically, a constructible combinatorial class is a set of structures that can be defined from atomic structures of size $1$ (denoted by $\cal{Z}$), 
possibly structures of size $0$ (denoted by $\cal{E}$), 
and assembled by means of \emph{admissible} constructors. 
While a complete list of these combinatorial constructors is given in~\cite{FlSe09}, we only use a (small) subset of them: 
the disjoint union, denoted by~$\uplus$ or~$+$ (we may also use the notation~$\sum$), to choose between structures;  
and the Cartesian product, denoted by~$\times$, to form pairs of structures. 
More formally, a constructible combinatorial class is one that admits a combinatorial specification.

\begin{defi}
A {\em combinatorial specification} for a combinatorial class $\cal{C}_1$ is an equation or a system of equations of the form 
\[
\begin{cases}
	  \cal{C}_1&\!\!=~\cal{H}_1(\cal E, \cal Z,\cal{C}_1,\cal{C}_2,\dots,\cal{C}_m),	\\
   	  \cal{C}_2&\!\!=~\cal{H}_2(\cal E,\cal Z,\cal{C}_1,\cal{C}_2,\dots,\cal{C}_m),	\\ 		
	 &\phantom{=H_k}\vdots                 	\\
   	  \cal{C}_m&\!\!=~\cal{H}_m(\cal E,\cal Z,\cal{C}_1,\cal{C}_2,\dots,\cal{C}_m),
\end{cases}
\]
where each $\cal{H}_i$ denotes a term built from $\cal{C}_1, \dots,
\cal{C}_m, \cal Z$ and $\cal E$ using admissible
constructors.
\end{defi}

For example, the equation~$\cal{I}=\cal{E}+\cal{Z}\times\cal{I}$
describes a class $\cal{I}$ whose elements are finite sequences of
atoms.

In this framework, the {\em size} of a combinatorial structure is its
number of atoms ($\Z$) and from there, combinatorial structures can be
counted according to their size.  The size information for a whole
combinatorial class, say $\cal{C}$, is encoded by its {\em ordinary
generating function}\footnote{We do not use \emph{exponential} but \emph{ordinary} generating functions to count pattern-avoiding permutations. 
First, note that the corresponding exponential generating functions would have infinite radii of convergence, 
pattern-avoiding permutations of size $n$ being always less than $c^n$ for some constant $c$~\cite{MaTa04}. 
The use of ordinary generating functions is moreover very natural since our work is based on an encoding of permutations by trees built on a finite set of nodes.}, 
which is the formal power series~$C(z)=\sum_{n\geq 0}c_nz^n$ where the coefficient $c_n$
is the number of structures of size~$n$ in~$\cal{C}$.
Note that we also have $C(z)=\sum_{\pi \in \cal{C}}z^{|\pi|}$. 

Combinatorial specifications of combinatorial
classes may be automatically translated into systems defining their
generating function (possibly implicitly).  This system is obtained by
means of a dictionary that associates an operator on generating
functions to each admissible constructor.  The complete dictionary is
given in~\cite{FlSe09}, together with the proof that this translation
from constructors of combinatorial classes to operators on their
generating functions is correct.  Here, we only use the constructors
disjoint union and Cartesian product, which are respectively
translated to sum and product of generating functions.

A lot of information can be extracted from such functional systems; in
particular, one can compute as many coefficients of the series as
required,  and~\cite{FlSe09} provides many tools to get asymptotic
equivalents for these coefficients.

Combinatorial specifications may also be automatically translated into
uniform random samplers of objects in the class described by the
specification.  Indeed, a specification can be seen as a (recursive) procedure to
produce combinatorial objects, and randomizing the choices made during
this procedure transforms the specification into a random sampler.  To
ensure that such random samplers are uniform (\emph{i.e.} that for any
$n$, two objects \emph{of the same size} $n$ have the same probability
of being produced), two methods have been developed: the recursive
method~\cite{FlZiVC94} and the Boltzmann method~\cite{DuFlLoSc04}.  In
the first one, the coefficients of the generating functions are used
for the probabilistic choices to ensure uniformity, making this method
well-adapted for generating a large sample of objects of relatively
small size: this requires to compute only once a relatively small number of coefficients. 
The focus of the second one is to achieve efficiently  the generation of very
large objects, with a small tolerance (of a few percents) allowed on
their size.  Coefficients of the generating functions are not needed in
Boltzmann samplers, but rather the generating functions themselves.
More precisely, the value of the generating function at a given point
needs to be computed, and this is solved in~\cite{PiSaSo2010}. 

\section{Our results in existing context}
\label{sec:detailed_intro}

\subsection{Our contributions, and comparison with \cite{AA05,BHV08a}}
\label{subsec:QCS}

The goal of the present work is to solve algorithmically a combinatorial problem on permutation classes: 
computing in an automatic way a combinatorial specification for any given class, under some conditions specified below. 
We first have to determine how to describe the permutation class~$\C$ in input.
In all what follows, we will suppose that~$\C$ is given by its basis $B$ of excluded patterns and the set~$\SC$ of simple permutations in~$\C$, assuming that both these sets are finite. 
Note that from~\cite[Theorem 9]{AA05} the basis of $\C$ is necessarily finite when $\C$ contains finitely many simple permutations. 
On the other hand, from~\cite{BRV08,BBPR14}, it is enough to know~$B$ to decide whether~$\SC$ is finite and (in the affirmative) to compute~$\SC$~\cite{PR11}. 

Our work is a continuation of the main result (Theorem 10) of~\cite{AA05}: 
every permutation class containing finitely many simples has an algebraic generating function. 
The main step in the proof of this result is to construct a system of combinatorial equations describing~$\C$ using the substitution decomposition, 
and more precisely by propagation of pattern avoidance constraints in the decomposition trees. 
Although the proof is in essence constructive, there is still some work to be done to fully automatize this process. 
In Section~\ref{sec:ambiguous}, we review their method, going deeper in the details of the construction. 
This allows us to bring their methodology to a full algorithm
-- see Algorithm~\textsc{AmbiguousSystem} (p.\pageref{alg:sys-ambigu})
and Theorem~\ref{thm:systemeAmbigu} (p.\pageref{thm:systemeAmbigu}).

It is important to note that the description of~$\C$ obtained in this way is not a combinatorial specification, since it is {\em a priori} ambiguous 
(that is to say, unions are not necessarily disjoint). 
Nevertheless, as it is done in~\cite{AA05}, this ambiguous system can be used 
for proving the algebraicity of the generating function of~$\C$, and even allows its computation (or implicit determination, in less favorable cases): 
it is enough to apply the inclusion-exclusion principle. 
The advantage of a specification over an ambiguous system will be discussed in Section~\ref{subsec:chain} (fifth step, p.\pageref{fifth_step}). 

\medskip

The algebraicity result of~\cite{AA05} was re-proved in~\cite{BHV08a} and extended
to generating functions of some subsets of classes~$\C$ with finitely many simples: 
the alternating permutations in~$\C$, the even ones, the involutions in~$\C$,\,... 
For our purpose, those extensions are less important than the alternative proof of the main result of~\cite{AA05}: 
indeed, this second proof describes a method to build a combinatorial specification for~$\C$. 
Essential to this proof are \emph{query-complete sets}, whose definition we recall. 

\begin{defi}
A \emph{property} is any set $P$ of permutations. 
A permutation $\pi$ is said to satisfy $P$ when $\pi \in P$. 
A set $\cP$ of properties is {\em query-complete} if,
for every simple permutation \si (and also for $\si =\oplus$ or $\ominus$) 
and for every property $P \in \cP$, 
it can be decided whether $\si[\alpha_1 , \dots , \alpha_{|\sigma|}]$ satisfies $P$
knowing only which properties of $\cP$ are satisfied by each $\alpha_i$. 
\end{defi}

The proof of the main result (Theorem 1.1) of~\cite{BHV08a} shows that a combinatorial specification for a permutation class~$\C$ with finitely many simples
can be obtained from any finite query-complete set $\cP$ such that $\C \in \cP$. 
More precisely, this combinatorial specification consists of three types of equations, described below
(this is reduced to two types by plugging the third one into the second one).
Recall that $\SC$ denotes the set of simple permutations in $\C$. 
Note that all unions below are finite, since $\cP$ and $\SC$ are finite by assumption. 
\begin{itemize}
 \item First, $\C$ is written as the disjoint union 
\begin{equation*}
\C = \uplus \, \C_{\X}, 
\end{equation*}
where the union runs over all subsets $\X$ of $\cP$ containing $\C$ 
with $\C_{\X}$ denoting the set of permutations that satisfy every property in $\X$ 
and do not satisfy any property in $\cP \setminus \X$.
 \item Second, for any such set $\X$, the substitution decomposition allows to write 
\begin{equation*}
\C_\X = 1_\X \uplus \C_\X^{\oplus} \uplus \C_\X^{\ominus} \uplus \biguplus_{\sigma \in \s_\C} \C_\X^{\sigma} 
\end{equation*}
where $1_\X$ is either the set $\{1\}$ if the permutation $1$ belongs to $\C_\X$, or the empty set otherwise, 
and where $\C_\X^{\sigma}$ is the subset of $\C_\X$ of permutations whose decomposition tree has root $\sigma$. 
 \item And third, for $\sigma \in \{\oplus,\ominus\} \cup \s_\C$ and $\X$ as above, the fact that $\cP$ is query-complete allows to express $\C_\X^{\sigma}$ as 
\begin{equation*}
\C_\X^{\sigma} = \biguplus \si[\C_{\X_1}, \dots, \C_{\X_m}] 
\end{equation*}
where the union is over the set $E_{\X,\si}$ of all $m$-uples $(\X_1, \dots, \X_m)$ of subsets of $\cP$ such that 
if, for every $i \in [1..m]$, it holds that $\alpha_i \in \C_{\X_i}$ 
then $\si[\alpha_1, \dots, \alpha_m] \in \C_\X$. 
In the case where $\sigma = \oplus$ (resp.\ $\ominus$), 
to ensure uniqueness of the decomposition, we further need to enforce that $\X_1$ contains the property of being $\oplus$-indecomposable (resp.\ $\ominus$-indecomposable). 
W.l.o.g., we can assume that these properties are in $\cP$. 
\end{itemize}
Note that the number of equations in the specification obtained depends exponentially on the size of $\cP$,
since there is at least one equation for each subset of $\cP$ containing $\C$.
Similarly,
the number of terms of the union defining some $\C_\X^{\sigma}$ may be exponential in the size of $\cP$,
since the union is over $m$-uples of subsets of $\cP$ (with $m = |\sigma|$).

We point out that the above specification is not fully explicit (even assuming that $\cP$ is given),
since there is no explicit description of the sets $E_{\X,\si}$. 
As explained in the proof of Lemma 2.1 of~\cite{BHV08a}, the sets $E_{\X,\si}$ 
may be described using so-called \emph{lenient inflations}, which are intimately linked with the \emph{embeddings by blocks} of~\cite{AA05}. 
But neither \cite{BHV08a} nor \cite{AA05} discuss their effective computation. 
We will return to this problem later in this section. 

For any permutation class~$\C = Av(B)$ with finitely many simples,
the authors of~\cite{BHV08a} provide a finite query-complete set that contains $\C$,
and conclude that there is a combinatorial specification for any such $\C$.
More precisely, the class $\C$ being described as $\cap_{\beta \in B} Av(\beta)$, 
they rather consider separately every principal class $Av(\beta)$ for all $\beta \in B$, 
and define a finite query-complete set  $\cP_\beta$ containing it. This is essentially their Lemma 2.1. 
The query complete set associated with $\C$, denoted $\cP_\C$, is then obtained taking the union of all $\cP_\beta$. 
It consists of the following properties: 
the set of $\oplus$-indecomposable permutations, 
the set of $\ominus$-indecomposable permutations, 
and the set $Av(\rho)$ for every permutation $\rho$ which is a pattern of some $\beta \in B$. 
Thus $\cP_\C$ is often a big set.

It should be noticed that the query-complete sets that are used in the examples of~\cite[Section 4]{BHV08a} 
are however strictly included in the set $\cP_\C$. 
These smaller query-complete sets are better, since they result in specifications with fewer equations and unions having fewer terms
than the ones that would be obtained applying to the letter the specializations of the proofs. 
But there are no indications in~\cite{BHV08a} on how these smaller query-complete sets were computed, 
nor on how this could generalize to other examples. 
It should be noticed that in the examples of~\cite{BHV08a},
the class is either substitution-closed, or contains no simple permutations.

\smallskip

To summarize, the proof of the main result of~\cite{BHV08a}
gives a general method to compute a specification for a permutation class having finitely many simple permutations.
But there is still some work to be done to fully automatize this process,
and the specification obtained would be very big in general.
Moreover, an algorithm using this method would have a lot of computations to do (the computation of all the sets $E_{\X,\si}$).
On the other hand, the \emph{ad hoc} constructions of the examples of~\cite{BHV08a} show that using the specificity of a particular permutation class,
it is possible to obtain shorter specifications with fewer computations.

\medskip

Our main contribution is to give an algorithm to compute a specification for any permutation class having finitely many simples,
using a different approach --
see Algorithm \textsc{Specification} (p.\pageref{alg:final})
and Theorem~\ref{thm:systemeNonAmbigu} (p.\pageref{thm:systemeNonAmbigu}).
Our method is general
(unlike the \emph{ad hoc} methods used in the examples of~\cite{BHV08a})
but nevertheless uses the specificity of the permutation class given in input (unlike the method described in the proofs of~\cite{BHV08a})
in order to do less computations and to have fewer equations in the specification.

Even if the method we use and the specification we produce are not exactly the ones presented by~\cite{BHV08a}
and reviewed above, they have some similarities. When considering the partition of $\C$ into $\uplus \, \C_{\X}$ shown above, 
\cite{BHV08a} is looking at a very fine level of details, where a coarser level could be enough. 
With our method, we consider the partition of $\C$ which is the coarsest possible to allow the derivation of a specification. 
In practice, if two sets $\X$ and $\X'$ are such that $\C_{\X}$ and $\C_{\X'}$ appear as $\C_{\X} \uplus \C_{\X'}$ everywhere 
in the specification resulting from~\cite{BHV08a}, 
our specification will have only one term instead of these two, representing $\C_{\X} \uplus \C_{\X'}$. 
Of course, this holds for unions with more terms as well. 
Considering fewer sets $\X$ results in fewer equations in the specification, fewer terms in the unions and fewer sets $E_{\X,\si}$. Since the computation of the $E_{\X,\si}$ amounts to computing the specification from the query-complete set, 
the algorithmic complexity for computing the specification with our method is hereby reduced, 
compared to what a  formalized algorithm of the approach of~\cite{BHV08a} would give. 

Concretely, our algorithm computes a query-complete set, the sets $E_{\X,\si}$ associated, and the specification in parallel, whereas the method of~\cite{BHV08a} is to first compute a query-complete set and then deduce a specification.
Note that in our presentation, the result of our algorithm is only the specification, but
it contains also implicitly the description of the query-complete set and of the $E_{\X,\si}$.

To obtain the announced coarsest partition of $\C$, 
and the subsequent specification with as few equations as possible that it yields, 
we proceed as follows. 
We use the same guideline as in~\cite{AA05} for computing a possibly ambiguous combinatorial system describing \C 
(however making this approach effective): the essential idea is to use
the substitution decomposition and to propagate pattern avoidance constraints in the decomposition trees.
We get rid of the ambiguity by introducing complement sets, but only when they are needed
(the method in the proofs of~\cite{BHV08a} can be seen as somehow introducing all complement sets at once). 
In practice, it means that we are not only propagating avoidance constraints in decomposition trees, but also containment constraints.
It will be clear in Sections~\ref{sec:ambiguous} and~\ref{sec:disambiguation} that 
even though the purposes of those two types of constraint are opposite, the ways to propagate them are very similar, 
an essential step being the effective computation of the embeddings/lenient inflations mentioned in~\cite{AA05,BHV08a}. 
The method used to explicitly determine which avoidance/containment constraints are necessary 
and to effectively propagate them in the trees 
is completely new with respect to~\cite{AA05,BHV08a}. 

\smallskip

To conclude on our contributions compared with those of~\cite{AA05,BHV08a}, 
our work describes how to obtain a combinatorial specification for any class having finitely many simple permutations.
Contrary to~\cite{AA05,BHV08a}, our work is fully algorithmic.
Moreover, we develop a method allowing to have fewer equations in the specification and to have a better efficiency
compared to what a formalized algorithm of the approach of~\cite{BHV08a} would give.

\subsection{An algorithmic chain from $B$ to random permutations in $Av(B)$}
\label{subsec:chain}

Our main result (that is, the algorithmic computation of specifications for permutation classes with finitely many simples) 
can and should be viewed in the context of other recent algorithms from the literature. 
Together, they provide a full algorithmic chain starting with the
finite basis $B$ of a permutation class $\C$, and computing a 
specification for $\C$, from which it is possible to sample permutations in \C uniformly at random. 
Figure~\ref{fig:schema2} shows an overview of this algorithmic chain, 
and we present its main steps below. 
Note that this procedure may fail to compute its final result, 
namely when \C contains an infinite number of simple permutations, this condition being tested algorithmically. 

\begin{figure}[htbp]
\includegraphics[width=\textwidth]{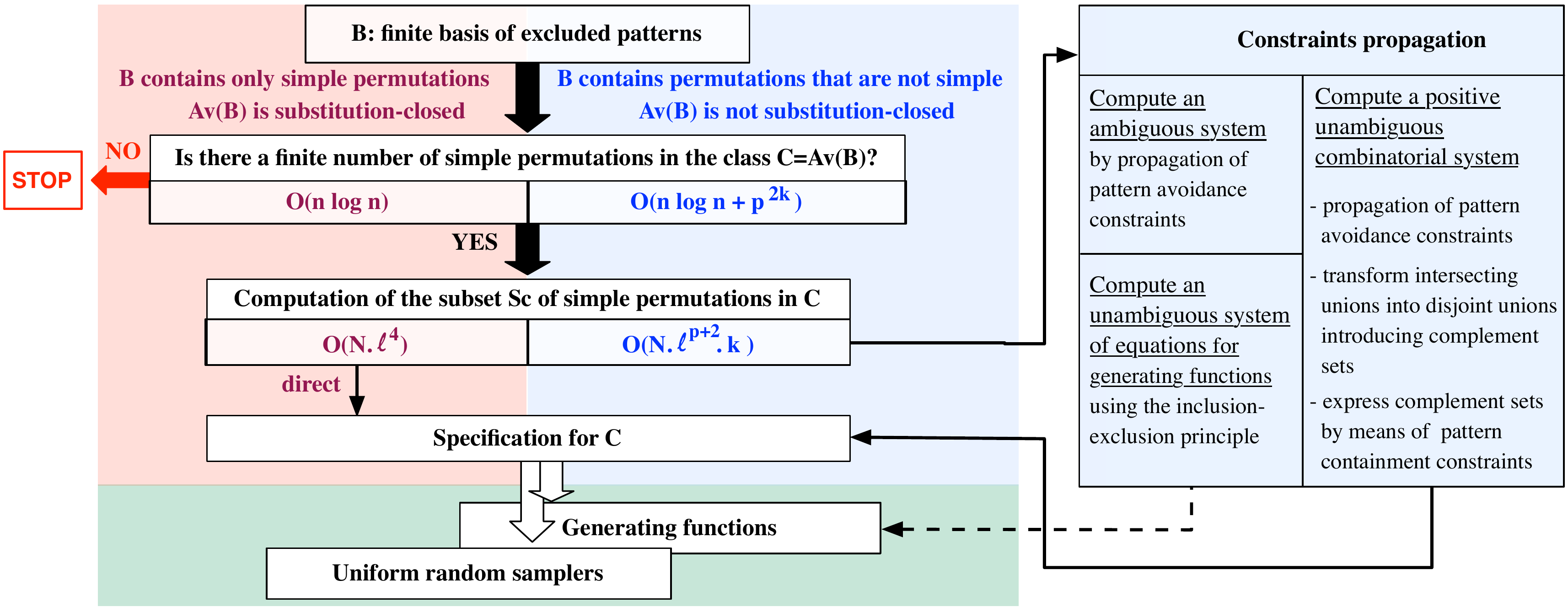}
\vspace{-3mm}
\caption
  [The full algorithmic chain starting from the basis $B$ of a permutation class~$\C$]
  {The full algorithmic chain starting from the basis $B$ of a permutation class~$\C$,
    with complexities given w.r.t.~$n = \sum_{\beta \in B} |\beta|$, $k = |B|$, 
    $p = \max \{|\beta| : \beta \in B\}$, $N = |\SC|$ and $\ell = \max \{|\pi| : \pi \in \SC\}$ 
    where $\SC$ is the set of simple permutations of~$\C$.}
\label{fig:schema2}
\end{figure}

We have chosen that the permutation class in input of our procedure should be given by its basis~$B$, that we require to be finite. 
This does not cover the whole range of permutation classes, but it is one of the possible ways to give a finite input to our algorithm.
There are of course other finite descriptions of permutation classes, even of some with infinite basis (by a recognition procedure for example). 
The assumption of the description by a finite basis has been preferred for two reasons: 
first, it encompasses most of the permutation classes that have been studied; 
and second, it is a necessary condition for classes to contain finitely many simple permutations (see~\cite[Theorem 9]{AA05})
and hence for our algorithm to succeed. 

\smallskip

\noindent \textbf{First step: Finite number of simple permutations. \\}
First, we check whether $\C = Av(B)$ contains only a finite number of
simple permutations.  
This is achieved using algorithms of
\cite{BBPR09} when the class is substitution-closed and of
\cite{BBPR14} otherwise. The complexity of these algorithms is
respectively $\mathcal{O}(n \log n)$ and $\mathcal{O}(n \log n + p^{2k})$, where
$n = \sum_{\beta \in B} |\beta|$, $p = \max \{|\beta| : \beta \in B\}$ and $k = |B|$.

\medskip

\noindent \textbf{Second step: Computing simple permutations. \\}
The second step of the algorithm is the computation of the set of
simple permutations $\SC$ contained in
$\C = Av(B)$, when we know it is finite. Again, when
$\C$ is substitution-closed, $\SC$ can
be computed by an algorithm that is more efficient than in the
general case. The two algorithms are described in \cite{PR11}, and
their complexity depends on the output: $\mathcal{O}(N \cdot
\ell^{p+2}\cdot k)$ in general and $\mathcal{O}(N \cdot \ell^{4})$ for
substitution-closed classes, with $N = |\SC|$,
$p = \max \{|\beta| : \beta \in B\}$, 
$\ell = \max \{|\pi| : \pi \in \SC\}$ 
and $k=|B|$.

\medskip

In the case of substitution-closed classes, the set of simple permutations in \C gives an immediate access to a specification for $\C$
-- see \cite{AA05} or Theorem~\ref{thm:wc} (p.\pageref{thm:wc}). 
In the general case, finding such a specification is the algorithmic problem that we address in this article. 

\medskip

\noindent \textbf{Third step: Computing a combinatorial specification. \\}
This corresponds to the computation of a combinatorial specification for \C
by propagation of pattern constraints and disambiguation of the equations, 
as briefly presented in Section~\ref{subsec:QCS}
and described in details in Sections~\ref{sec:ambiguous} and \ref{sec:disambiguation}. 

\medskip

From a combinatorial specification for \C, that we may obtain algorithmically as described above, there are two natural algorithmic continuations (which we have reviewed in Section~\ref{ssec:ConstructibleClasses}):

\medskip

\noindent \textbf{Fourth step: Computing the generating function $C(z)$ of \C. \\}
With the \emph{dictionary} of~\cite{FlSe09}, a system of equations defining $C(z)=\sum_{n\geq 0}c_nz^n$ is immediately deduced from the specification.
Because our specification involves only disjoint unions and Cartesian products, the resulting system is positive and algebraic.
In some favorable cases, this system may be solved for $C(z)$ explicitly. 
Even if it is not the case, many information may still be derived from the system, in particular about the coefficients $c_n$ or the growth rate of the class. 

\medskip

\noindent \textbf{An alternative for the computation of $C(z)$. \\}
As explained in~\cite{AA05} and reviewed earlier in this paper, 
it is also possible to obtain such a system of equations for $C(z)$ from an \emph{ambiguous} system describing $\C$, applying the inclusion-exclusion principle. 
In this case, the obtained system is algebraic but with negative terms in general.

\medskip

\noindent \textbf{Fifth step: Random sampling of permutations in \C.\\}\label{fifth_step}
In Section~\ref{ssec:ConstructibleClasses}, we have reviewed the principles that allow, in the same fashion as the dictionary of~\cite{FlSe09}, 
to translate the combinatorial specification for \C into uniform random samplers of permutations in \C. 
Remark that this translation is possible only with a specification, \emph{i.e.} a \emph{positive unambiguous} system describing \C. 
Indeed, whereas adapted when considering generating functions (where subtraction is easily handled), 
the inclusion-exclusion principle cannot be applied for random generation (since ``subtracting combinatorial objects'' is not an option in a procedure to produce them). 

\smallskip

To illustrate that this algorithmic chain is effective, we present in Section~\ref{sec:ex} how our algorithms run on examples. 
We also show some observations that are produced through it in Figures~\ref{fig:exemple900} and~\ref{fig:exemple_random} below. 
These figures have been obtained with a prototype implementing our algorithms, that we hope to make available for use by others in the future\footnote{A Boltzmann sampler for substitution-closed classes is already available here: \url{http://igm.univ-mlv.fr/~pivoteau/Permutations/}. The implementation is in Maple, an example of use is given in the worksheet.}.

\begin{figure}[htbp]
\begin{center}
\scalebox{0.35}{\input{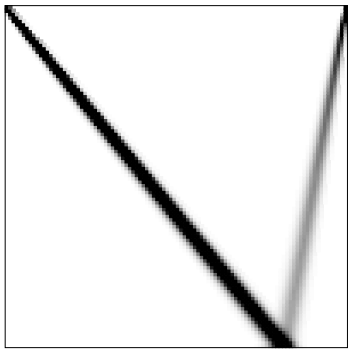}}
\end{center}
\caption{The shape of permutations in the (not substitution-closed) class $\C$ of Section~\ref{ssec:grand_ex}. }\label{fig:exemple900}
\end{figure}

Figure~\ref{fig:exemple900} shows the ``average diagram'' of a permutation in the class \C (not substitution-closed) studied in Section~\ref{ssec:grand_ex} (p.\pageref{ssec:grand_ex}). 
The \emph{diagram} of a permutation $\sigma$ is the set of points in the plane at coordinates $(i,\sigma_i)$, and 
the picture in Figure~\ref{fig:exemple900} is obtained by drawing uniformly at random $30\,000$ permutations of size $500$ in \C, 
and by overlapping their diagrams -- the darker a point $(x,y)$ is, the more of these permutations have $\sigma(x)=y$.

\begin{figure}[htbp]
\begin{center}
\includegraphics[scale=0.4]{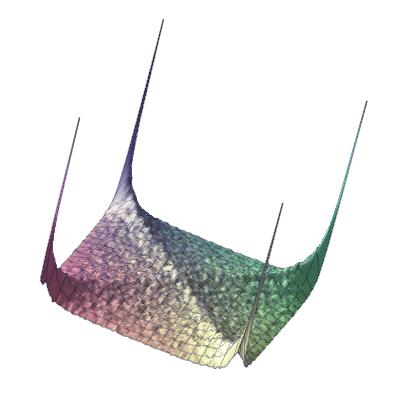}
\includegraphics[scale=0.25]{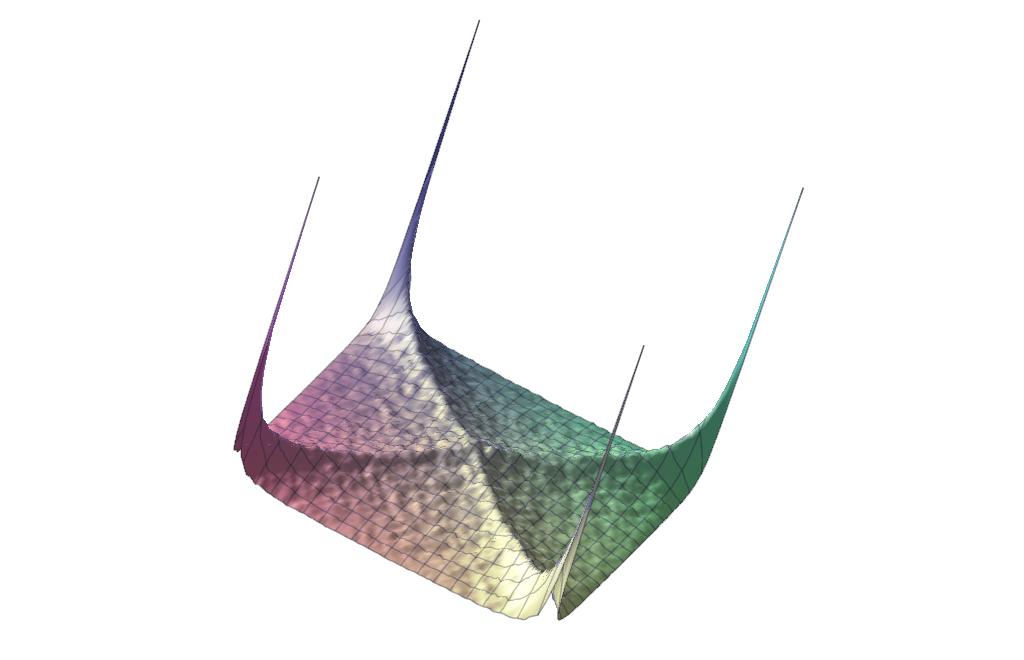}
\end{center}
\caption{The shape of separable permutations (left), and of permutations taken in the substitution-closed class
whose set of simple permutations is $\{2413,3142,24153\}$ (right).}\label{fig:exemple_random}
\end{figure}

Figure~\ref{fig:exemple_random} shows average
diagrams of permutations in the (substitution-closed) class $Av(2413,3142)$ of separable
permutations, and in another substitution-closed class.  These diagrams are
obtained overlapping the diagrams of $10\,000$ permutations of size
$100$ (resp.\ $500$).  The representation is however a little different from
Figure~\ref{fig:exemple900}: in these 3D representations, for a point
at coordinates $(x,y,z)$, $z$ is the number of permutations such that
$\sigma(x)=y$. 
Leaving aside the difference in the representation, these figures suggest a very different limit behavior in substitution-closed and not substitution-closed classes. 

Looking at these diagrams, a natural question is then to describe the average shape of permutations in classes. 
This is a question which has received quite a lot of attention lately, especially for classes $Av(\tau)$ for $\tau$ of size $3$, see~\cite{AtMa14,HoRiSl14,MaPe14,MiPa14}. 
Inspired by Figure~\ref{fig:exemple_random}, some of us (in collaboration with V. Féray and L. Gerin) 
have described the limit shape of separable permutations in~\cite{brownian}, thus explaining the first diagram of Figure~\ref{fig:exemple_random}. 
As we discuss in~\cite{brownian}, we are working on generalizing this result to substitution-closed classes, 
which would also explain the second diagram of Figure~\ref{fig:exemple_random}. 

\subsection{Perspectives}

As described in Section~\ref{subsec:chain}, our main result 
combines with previous works to yield 
an algorithm that produces, for any class~$Av(B)$ containing finitely many simple permutations, 
a recursive (resp.~Boltzmann) uniform random sampler. 
When generating permutations with such samplers, 
complexity is measured w.r.t.~the size of the permutation
produced and is quasilinear (resp.\ quadratic but can be made linear
using classical tricks and allowing a small variation on the size of the
output permutation~\cite{DuFlLoSc04}).  However, the complexity is not
at all measured w.r.t.~the number of equations in the specification
nor w.r.t.~the number of terms in each equation.  
In our context, where the specifications are produced automatically, 
and potentially contain a large number of equations/terms, this
dependency is of course relevant, and opens a new direction in the
study of random samplers.

\medskip

In addition to providing inspiration for the study of random permutations, 
 our algorithmic chain has other applications. 
Indeed, the specifications obtained could also be used to
compute or estimate growth rates of permutation classes.
Moreover,  the computed specifications could possibly be used to provide more efficient algorithms to test
membership of a permutation to a class.

\medskip

We should also mention that our procedure fails to be completely
general.  Although the method is generic and algorithmic, the classes
that are fully handled by the algorithmic process are those containing
a finite number of simple permutations. From~\cite{AA05},
such classes are finitely based.
And since there are countably many such permutation classes, 
only a very small subset of the (uncountably many) permutation classes 
is covered by our method.

But note that even if a class \C contains an infinite number of simple
permutations, we can (at least in theory) use our approach to perform random generation of
permutations of the class \C.
More precisely, fixing the maximum size~$n$ of permutations we want to generate, 
we can apply our algorithm to a class~$\C'\subseteq\C$ containing finitely many simple permutations and that coincides with~$\C$ up to size~$n$.
It is enough to choose~$\C'$ which is the subclass of~$\C$ whose set of simple permutations consists in all simple permutations of~$\C$ of size at most~$n$, 
whose computation is explained in~\cite{PR11}. 
If the maximal size~$n$ is large and if \C has many simple permutations of each size, 
it is likely that the complexity of our algorithm will be too large for it to be of any use.
But this approach may be relevant when the class has an infinite number of simple permutations, but a small number of each size. 

To enlarge the framework of application of our algorithm computing specifications,
we could explore the possibility of extending it to permutation classes that contain an infinite number of simple permutations, but that are finitely described.
A family of such classes is considered in~\cite{ARV15}, 
where the finite basis and algebraicity results of~\cite{AA05} are extended from 
classes with finitely many simple permutations 
to subclasses of substitution closures of geometrically griddable classes (we refer the reader to~\cite{ARV15} for definitions). 
The proofs in~\cite{ARV15} involve similar techniques as in~\cite{BHV08a} (including query-complete sets), but not only. 
In particular, they heavily rely on the results of~\cite{AABRV13}, which are non-constructive. 
Making all the process in~\cite{AABRV13} and~\cite{ARV15} constructive, and then turning it into an effective procedure, 
it may be possible to extend our algorithms to all classes considered in~\cite{ARV15}. 
But this is far from straightforward, and beyond the scope of the present work. 
Note that, with such an improvement, more classes would enter our framework, but it would be hard to leave the algebraic case. 

\section{Combinatorial specification of substitution-closed classes}
\label{sec:substitution-closed}

In this section, we recall how to obtain a combinatorial specification for substitution-closed classes having finitely many simple permutations. 

Recall that we denote by $\Wc$ the substitution closure of the permutation class $\C$, 
and that $\C$ is substitution-closed when {$\C=\Wc$}, 
or equivalently when the permutations in \C
are exactly the ones whose decomposition trees have
internal nodes labeled by $\oplus, \ominus$ or any simple permutation of {$\C$}.

For the purpose of this article, we additionally introduce the following notation:
\begin{defi}\label{def:C+}
For any set $\cA$ of permutations, $\cA^+$ (resp.\ $\cA^-$) denotes the set of permutations of $\cA$ that are $\oplus$-indecomposable 
(resp.\ $\ominus$-indecomposable) and $\mathcal{S_\cA}$ denotes the set of simple permutations of \cA.
\end{defi}

Theorem~\ref{thm:decomp_perm_AA05} (p.\pageref{thm:decomp_perm_AA05}) directly yields the following proposition:

\begin{prop}[Lemma 11 of~\cite{AA05}]\label{prop:sys_wc}
Let $\C=\wc$ be a substitution-closed class\footnote{that contains $12$ and $21$; 
it will be the case until the end of the article and will not be recalled again.}.
Then \wc satisfies the following system of equations, denoted $\Sys_{\wc}$:
	\begin{eqnarray}
        \wc  &=& 1\ \uplus \ \oplus[\wc^+, \wc]\  \uplus \ \ominus[\wc^-, \wc] \ \textstyle\uplus \biguplus_{\pi \in \s_\C} \pi[\wc, \dots, \wc]\label{eqn:Wc1}  \\
        \wc^+ &=& 1\  \uplus \ \ominus[\wc^-, \wc]\  \uplus\  \textstyle\biguplus_{\pi \in \s_\C} \pi[\wc, \dots, \wc] \label{eqn:Wc2}\\
        \wc^- &=& 1\  \uplus\  \oplus[\wc^+, \wc]\  \uplus\  \textstyle\biguplus_{\pi \in \s_\C} \pi[\wc, \dots, \wc]. \label{eqn:Wc3}
        \end{eqnarray}
\end{prop}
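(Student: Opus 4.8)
The plan is to obtain the three equations as a direct transcription of the substitution decomposition theorem (Theorem~\ref{thm:decomp_perm_AA05}), by sorting the permutations of $\wc$ according to the label of the root of their decomposition tree. Before doing so, I would fix two preliminary facts. First, since $\C$ is substitution-closed with $\C=\wc$, the class $\wc$ is exactly the set of permutations all of whose decomposition-tree nodes carry a label in $\{\oplus,\ominus\}\cup\s_\C$ (the characterization recalled just before the definition of substitution-closed classes, together with Remark~\ref{rem:simple_in_WC}); in particular $\wc$ is stable under the substitution $\sigma[\cdot,\dots,\cdot]$ for every such label $\sigma$, and conversely, if $\tau\in\wc$ admits a block decomposition $\sigma[\pi^1,\dots,\pi^k]$ then $\sigma$ is such a label --- in the prime case a simple permutation of $\C$ --- and each $\pi^i$ lies in $\wc$, since the decomposition tree of $\pi^i$ is a subtree of that of $\tau$. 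Second, each operation appearing on the right-hand sides --- namely $\oplus[\cdot,\cdot]$, $\ominus[\cdot,\cdot]$, and $\sigma[\cdot,\dots,\cdot]$ for a fixed simple $\sigma$ of size $k$ --- is size-additive, and the term $1$ denotes the atomic class reduced to the unique permutation of size~$1$ (which lies in $\wc$ since $12\in\C$); hence, once the displayed unions are known to be disjoint and each substitution operator is known to be injective on the tuples to which it is applied, $\Sys_{\wc}$ is a combinatorial specification in the earlier sense, the operators playing the roles of disjoint union and Cartesian product.

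For \eqref{eqn:Wc1}, I would isolate the permutation $1$ (the only one of size~$1$) and apply Theorem~\ref{thm:decomp_perm_AA05} to the permutations of $\wc$ of size at least~$2$: each of them is, in a unique way, of the form $\oplus[\pi^1,\pi^2]$ with $\pi^1$ $\oplus$-indecomposable, $\ominus[\pi^1,\pi^2]$ with $\pi^1$ $\ominus$-indecomposable, or $\sigma[\pi^1,\dots,\pi^k]$ with $\sigma$ simple. By the first preliminary fact, the components then satisfy $\pi^1\in\wc^+$ and $\pi^2\in\wc$ in the first case, $\pi^1\in\wc^-$ and $\pi^2\in\wc$ in the second, and $\sigma\in\s_\C$ with all $\pi^i\in\wc$ in the third; conversely every substitution of permutations of $\wc$ (subject to the indecomposability constraint in the linear cases) belongs to $\wc$ and has the prescribed root. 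This yields the set-theoretic identity \eqref{eqn:Wc1}, and the \emph{uniqueness} clause of Theorem~\ref{thm:decomp_perm_AA05} shows at the same time that all the families on the right-hand side are pairwise disjoint (including for distinct $\sigma\in\s_\C$) and that each substitution operator is injective, so that \eqref{eqn:Wc1} is unambiguous. Equation \eqref{eqn:Wc2} follows by the same reasoning after remarking that a permutation of $\wc$ is $\oplus$-indecomposable precisely when it is $1$ or its decomposition-tree root is $\ominus$ or a simple permutation; thus $\wc^+$ is obtained from \eqref{eqn:Wc1} by deleting the term $\oplus[\wc^+,\wc]$. Equation \eqref{eqn:Wc3} is proved identically, with the roles of $\oplus$ and $\ominus$ exchanged.

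The combinatorial content is entirely carried by Theorem~\ref{thm:decomp_perm_AA05}, so the point deserving attention --- rather than the bookkeeping above --- is the step that upgrades the set equalities into a genuine combinatorial specification: one must check that the unions are disjoint and that $\oplus[\cdot,\cdot]$, $\ominus[\cdot,\cdot]$ and $\sigma[\cdot,\dots,\cdot]$ can legitimately be treated as admissible constructors (disjoint union and Cartesian product). As observed, this is exactly what the uniqueness part of Theorem~\ref{thm:decomp_perm_AA05} provides, so no inclusion-exclusion correction is ever needed here --- in contrast with the non substitution-closed classes treated later in the paper.
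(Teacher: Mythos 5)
Your proof is correct and follows the same route as the paper, which derives the proposition directly from Theorem~\ref{thm:decomp_perm_AA05} (sorting permutations by the root of their decomposition tree, using substitution-closedness for the converse inclusion, and uniqueness of the decomposition for disjointness of the unions). Your write-up simply makes explicit the details that the paper leaves as immediate.
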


Note that by Remark~\ref{rem:simple_in_WC}, $\s_\C= \s_\wc$. 
By uniqueness of the substitution decomposition, unions are disjoint and so Equations~\eqref{eqn:Wc1} to \eqref{eqn:Wc3} describe
unambiguously the substitution-closed class $\Wc$. 
Hence, Proposition~\ref{prop:sys_wc} can be transposed in the framework of constructible structures as follows:

\begin{theo}\label{thm:wc}
Let \C be a substitution-closed class.
Then \C can be described as a constructible combinatorial class in the sense of Section~\ref{ssec:ConstructibleClasses}
with the following combinatorial specification, where the $\cal{E}_\pi$ for $\pi$ in $\mathcal{S_C}$ are distinct objects of size $0$:\footnote{The $\cal{E}_\pi$ are introduced only to distinguish between substitutions in distinct $\pi$ of the same size. 
The term $\cal{E}_\pi \times \C \times \dotsb \times \C$ then corresponds to the classical substitution operation: $\pi[ \, \C,  \dotsb, \C \, ]$}
$$\begin{cases}
\C = \Z + \ \cal{E}_\oplus \times \C^+\times\C \ + \ \cal{E}_\ominus \times \C^-\times\C \ + \ \sum_{\pi \in \mathcal{S_C}} \cal{E}_\pi \times \underbrace{\C \times \dotsb \times \C}_{|\pi|} \\
\C^+ = \Z + \ \cal{E}_\ominus \times \C^-\times\C \ + \ \sum_{\pi \in \mathcal{S_C}} \cal{E}_\pi \times \underbrace{\C \times \dotsb \times \C}_{|\pi|} \\ 
\C^- = \Z + \ \cal{E}_\oplus \times \C^+\times\C \ + \ \sum_{\pi \in \mathcal{S_C}} \cal{E}_\pi \times \underbrace{\C \times \dotsb \times \C}_{|\pi|}. 
\end{cases}$$
\end{theo}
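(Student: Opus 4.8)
The plan is to derive Theorem~\ref{thm:wc} as a direct reformulation of Proposition~\ref{prop:sys_wc} in the language of combinatorial classes and specifications. So the real work has already been done in establishing $\Sys_{\wc}$; what remains is to check that the system $\Sys_{\wc}$ is \emph{unambiguous} (the unions are disjoint) and that each operation appearing in it translates into an admissible constructor. I would first recall, by Remark~\ref{rem:simple_in_WC}, that $\s_\C = \s_{\wc}$, so the sums over $\pi \in \s_\C$ in the claimed specification are exactly the sums over simple permutations labelling prime nodes in decomposition trees of permutations of $\C = \wc$.

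The main step is to argue disjointness of the unions in Equations~\eqref{eqn:Wc1}--\eqref{eqn:Wc3}, which is where Theorem~\ref{thm:decomp_perm_AA05} does the heavy lifting. A permutation $\pi \in \wc$ of size $\geq 2$ is decomposed in exactly one of three mutually exclusive ways: $\oplus[\pi^1,\pi^2]$ with $\pi^1$ $\oplus$-indecomposable, $\ominus[\pi^1,\pi^2]$ with $\pi^1$ $\ominus$-indecomposable, or $\sigma[\pi^1,\dots,\pi^k]$ with $\sigma$ simple; moreover within the third case the simple permutation $\sigma$ and the components $\pi^i$ are uniquely determined. Together with the size-$1$ case (the single atom $1$, accounting for the term $\Z$), this gives that each $\pi \in \wc$ corresponds to exactly one term of the right-hand side of~\eqref{eqn:Wc1}, and to exactly one choice of witnesses within that term. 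The same reasoning, restricted to $\oplus$-indecomposable (resp.\ $\ominus$-indecomposable) permutations, yields~\eqref{eqn:Wc2} (resp.\ \eqref{eqn:Wc3}): a $\oplus$-indecomposable permutation of size $\geq 2$ cannot be of the form $\oplus[\pi^1,\pi^2]$, so the corresponding term is simply dropped. I would note that $\wc^+ = (\wc)^+$ and $\wc^- = (\wc)^-$ in the notation of Definition~\ref{def:C+}, and that these are well-defined subclasses.

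Then I would perform the translation. The disjoint union $\uplus$ is the admissible constructor $+$. For the substitution terms, the point is that $\sigma[\C_1,\dots,\C_k]$ where $\sigma$ is a \emph{fixed} permutation of size $k$ is, as a combinatorial class graded by size (number of atoms $\Z$), in size-preserving bijection with the Cartesian product $\C_1 \times \dots \times \C_k$: an element is exactly a $k$-tuple $(\pi^1,\dots,\pi^k)$, and $|\sigma[\pi^1,\dots,\pi^k]| = \sum_i |\pi^i|$. To keep the specification well-formed one must record which simple permutation $\sigma$ is being used, since two distinct simple permutations of the same size $k$ would otherwise give indistinguishable terms $\C \times \dots \times \C$; this is the role of the size-$0$ neutral objects $\cal{E}_\pi$, one for each $\pi \in \s_\C$ (and $\cal{E}_\oplus$, $\cal{E}_\ominus$ for the linear nodes), so that $\cal{E}_\pi \times \C \times \dots \times \C$ faithfully encodes $\pi[\C,\dots,\C]$. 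Since $\s_\C$ is finite (this is the standing hypothesis that $\C$ contains finitely many simple permutations), all the sums are finite, so the resulting object is a genuine finite system of equations, i.e.\ a combinatorial specification in the sense of the definition in Section~\ref{ssec:ConstructibleClasses}. This shows $\C$ is constructible, as claimed.

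The only genuinely delicate point — and the one I would be most careful about — is the bookkeeping with the $\cal{E}_\pi$: one must check that introducing these neutral size-$0$ markers does not change the class up to the equivalence that matters (same generating function, same underlying objects), and in particular that the translation to generating functions via the dictionary of~\cite{FlSe09} sends $\cal{E}_\pi$ to $1$, so that the counting is unaffected. Everything else is a mechanical unwinding of definitions, with all the combinatorial content supplied by Theorem~\ref{thm:decomp_perm_AA05} and Proposition~\ref{prop:sys_wc}. I would therefore present the proof as: (i) recall $\s_\C = \s_{\wc}$; (ii) invoke Proposition~\ref{prop:sys_wc} for the system $\Sys_{\wc}$; (iii) invoke uniqueness in Theorem~\ref{thm:decomp_perm_AA05} for disjointness of all unions; (iv) translate $\uplus \to +$, the atom $1 \to \Z$, and each $\pi[\C,\dots,\C] \to \cal{E}_\pi \times \C^{|\pi|}$, noting finiteness of $\s_\C$ to conclude that this is a valid combinatorial specification.
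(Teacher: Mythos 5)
Your proposal is correct and follows essentially the same route as the paper: the paper derives Theorem~\ref{thm:wc} directly from Proposition~\ref{prop:sys_wc}, noting that $\s_\C=\s_{\wc}$ (Remark~\ref{rem:simple_in_WC}) and that uniqueness of the substitution decomposition (Theorem~\ref{thm:decomp_perm_AA05}) makes the unions disjoint, after which the translation into constructors (with the size-$0$ markers $\mathcal{E}_\pi$) is immediate. Your additional remarks on the finiteness of $\s_\C$ and on $\mathcal{E}_\pi$ translating to $1$ in the generating-function dictionary are consistent with what the paper leaves implicit.
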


Moreover this system can be translated into an equation for the generating function $C(z)$: 

\begin{prop}[Theorem 12 of~\cite{AA05}]\label{prop:eqn_serie_wc}
Let \C be a substitution-closed class, with generating function $C(z)$. Then
$$C(z)^2 + (S_{\C}(C(z))-1+z)C(z) +S_{\C}(C(z)) +z = 0 $$
with $S_{\C}(z)$ denoting the generating function that enumerate simple permutations in $\C$, i.e. $S_{\C}(z) = \sum_{\pi \in \mathcal{S_C}} z^{|\pi|}$.
\end{prop}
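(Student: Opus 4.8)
The plan is to translate the combinatorial specification of Theorem~\ref{thm:wc} into a functional equation via the symbolic dictionary of~\cite{FlSe09}, and then eliminate the auxiliary series $C^+(z)$ and $C^-(z)$ to obtain a single equation for $C(z)$. First I would record that, since disjoint union translates into sum and Cartesian product into product of generating functions, and since each $\cal E_\pi$ is an object of size $0$ (hence contributes a factor $1$), the system of Theorem~\ref{thm:wc} becomes
\begin{eqnarray*}
C &=& z + C^+ C + C^- C + S_\C(C), \\
C^+ &=& z + C^- C + S_\C(C), \\
C^- &=& z + C^+ C + S_\C(C),
\end{eqnarray*}
where I abbreviate $C = C(z)$, $C^\pm = C^\pm(z)$, and where $S_\C(C) = \sum_{\pi \in \s_\C} C^{|\pi|}$ is the substitution of $C$ into the generating function $S_\C(z) = \sum_{\pi\in\s_\C}z^{|\pi|}$ enumerating simple permutations of $\C$ (each term $\cal E_\pi \times \C \times \dots \times \C$ with $|\pi|$ factors of $\C$ contributing $C^{|\pi|}$).

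Next I would do the elimination. Subtracting the second equation from the third gives $C^- - C^+ = C^+ C - C^- C = -(C^- - C^+)C$, hence $(C^- - C^+)(1 + C) = 0$; since $1 + C$ is a power series with constant term $1$, it is invertible, so $C^+ = C^-$. Write $D := C^+ = C^-$. The first equation then reads $C = z + 2DC + S_\C(C)$, and the second reads $D = z + DC + S_\C(C)$. Subtracting the second from the first yields $C - D = DC$, i.e. $D(1 + C) = C$, so $D = C/(1+C)$ (again using invertibility of $1+C$). Substituting this into $D = z + DC + S_\C(C)$ gives
\[
\frac{C}{1+C} = z + \frac{C^2}{1+C} + S_\C(C),
\]
and multiplying through by $1 + C$ produces $C = z(1+C) + C^2 + (1+C)S_\C(C)$, that is
\[
C^2 + (S_\C(C) - 1 + z)C + S_\C(C) + z = 0,
\]
which is exactly the claimed equation.

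There is essentially no hard step here; the only points that need a word of care are the invertibility of $1+C(z)$ as a formal power series (immediate since $C(z)$ has zero constant term, as $\C$ contains no permutation of size $0$) and the legitimacy of the symbolic translation itself, which is guaranteed by~\cite{FlSe09} once we know — as established in Theorem~\ref{thm:wc} — that the system is a genuine combinatorial specification built only from disjoint unions, Cartesian products, atoms $\Z$ and size-$0$ objects $\cal E_\pi$. The only genuinely substantive ingredient, Proposition~\ref{prop:sys_wc} together with Remark~\ref{rem:simple_in_WC} guaranteeing $\s_\C = \s_{\wc}$ and the disjointness of the unions, has already been invoked in the statement of Theorem~\ref{thm:wc}, so the proof of Proposition~\ref{prop:eqn_serie_wc} is just the routine algebraic elimination carried out above. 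Hence I expect the write-up to be short, with the ``main obstacle'' being merely bookkeeping: keeping track of which equation is subtracted from which so that the auxiliary unknowns $C^+$ and $C^-$ disappear cleanly.
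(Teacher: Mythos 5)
Your proof is correct and follows exactly the route the paper intends (and that~\cite{AA05} uses): translate the specification of Theorem~\ref{thm:wc} into the three power-series equations via the dictionary of~\cite{FlSe09}, then eliminate $C^+$ and $C^-$, the only points of care being the invertibility of $1+C(z)$ (which holds since $C(z)$ has zero constant term). The algebraic elimination is carried out correctly and yields the stated quadratic relation.
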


Hence, in the case of a substitution-closed class $\C$ (and for the substitution closure $\wc$ of any class), 
the system $\Sys_{\wc}$ that recursively describes the permutations in $\wc$ 
can be immediately deduced from the set $\mathcal{S_C}$ of simple permutations in $\C$. 
As soon as $\mathcal{S_C}$ is finite and known, this system is explicit and gives a combinatorial specification.

Our next goal is to describe an algorithm that computes a combinatorial system of equations for a general permutation class $\C$ 
from the simple permutations in $\C$, like for the case of substitution-closed classes. 
However, when the class is not substitution-closed, this is not as straightforward as what we have seen in Proposition~\ref{prop:sys_wc}, 
and we provide details on how to solve this general case in the following sections.

\section{A possibly ambiguous combinatorial system for permutation classes}
\label{sec:ambiguous}
\label{sec:addConstraint}

In this section, we explain how to derive a system of equations for a class $\C$ with finitely many simple permutations
from the combinatorial specification of its substitution closure. 
Our method follows the guideline of the constructive proof of~\cite[Theorem 10]{AA05}. 
However, unlike~\cite{AA05}, we make the whole process fully algorithmic. 

The key idea of the method is to describe recursively the permutations in $\C$, replacing the constraint of avoiding the elements of the basis
by constraints in the subtrees of the decomposition tree of permutations in $\C$.
This is done by computing the {\em embeddings} of non-simple permutations $\gamma$ of the basis\footnote{Theorem 9 of~\cite[Theorem 9]{AA05} ensures that, as soon as $\C$ contains finitely many simple permutations,
then the basis of $\C$ is finite.}
$B$ of $\C$ into simple permutations $\pi$ belonging to the class $\C$ (and into $\oplus$ and $\ominus$). 
These embeddings are block decompositions of the permutations $\gamma$, 
each (normalized) block being translated into a new avoidance constraint pushed downwards in the decomposition tree.
We then need to add new equations in the specification for $\C$ to take into account these new constraints.

The main algorithm of this section is \textsc{AmbiguousSystem} (Algo.~\ref{alg:sys-ambigu} below),
which uses auxiliary procedures described later on.
We prove in Section~\ref{ssec:proofThAmbiguous} that the result it produces has the following properties:

\begin{theo}\label{thm:systemeAmbigu}
Let $\C$ be a permutation class with a finite number of simple permutations. 
Denote by $B$ the (finite) basis of $\C$, by $\Bstar$ the subset of non-simple permutations in $B$, by $\s_\C$ the (finite) set of simple permutations in $\C$, and  by $\cA \langle E \rangle$ the set of permutations
of a set $\cA$ that avoid every pattern in a set $E$. 
The result of \textsc{AmbiguousSystem}$(B,\s_\C)$ 
is a finite system of combinatorial equations describing $\C$. 
The equations of this system are all of the form $\D_0 = 1\ \cup\ \bigcup \pi[\D_1, \dots, \D_n]$, where 
$\D_i = \wc^{\delta} \langle \Bstar \cup B_i \rangle$ with $\delta \in \{~, +, -\}$\footnote{For any set $\cA$ of permutations, 
when writing $\cA^{\delta}$ for $\delta \in \{~, +, -\}$, we mean that $\cA^{\delta}$ is either $\cA$ or $\cA^+$ or $\cA^-$.} and
$B_i$ contains only permutations corresponding to normalized blocks of elements of $\Bstar$. 
This system contains an equation whose left part is $\C$ and is complete, that is: every $\D_i$ that appears in the system is the left part of one equation of the system.
\end{theo}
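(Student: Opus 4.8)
The plan is to analyze \textsc{AmbiguousSystem} as a fixed-point / closure computation, and to argue correctness by induction on the structure of decomposition trees. First I would set up the invariant that the algorithm maintains: a finite collection of ``names'' of the form $\wc^{\delta}\langle \Bstar \cup E\rangle$, where each $E$ is a finite set of normalized blocks arising from the $\gamma \in \Bstar$, together with, for each such name, a tentative equation of the announced shape $\D_0 = 1 \cup \bigcup \pi[\D_1,\dots,\D_n]$. The algorithm starts from the single name $\C = \wc\langle \Bstar\rangle$ (using that $\C = \wc \cap Av(\Bstar)$, since the simple patterns of $B$ are already excluded from $\wc$ by Remark~\ref{rem:simple_in_WC}), and repeatedly: for each name not yet expanded, applies the substitution-decomposition equations of $\wc$ from Proposition~\ref{prop:sys_wc}, and for each root $\pi \in \{\oplus,\ominus\}\cup\s_\C$, uses the embeddings of the $\gamma \in \Bstar$ into $\pi$ to compute, for each child slot $i$, the set $B_i$ of normalized blocks that must be forbidden there so that $\pi[\alpha_1,\dots,\alpha_n]$ avoids $\Bstar$; this produces the children names $\D_i = \wc^{\delta}\langle \Bstar \cup B_i\rangle$. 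Any child name not already present is added to the worklist.

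The three things to prove are: (i) \emph{termination / finiteness}, (ii) \emph{the shape of the equations}, and (iii) \emph{correctness and completeness}. For (i), the key observation is that every set $B_i$ produced consists only of normalized blocks of elements of $\Bstar$; since $\Bstar$ is finite and each $\gamma \in \Bstar$ has finitely many intervals, there are only finitely many such blocks, hence only finitely many subsets, hence only finitely many possible names $\wc^{\delta}\langle \Bstar \cup E\rangle$ — so the worklist closure terminates. I would also note that the blocks are inherited transitively: a normalized block of a normalized block of $\gamma$ is again a normalized block of $\gamma$, so the property is stable under iteration and no new ``symbols'' ever appear beyond this finite pool. For (ii), the shape is immediate from the construction: each equation is produced by instantiating one of the three equations of $\Sys_{\wc}$ and refining the children, so it literally has the form $\D_0 = 1 \cup \bigcup_{\pi} \pi[\D_1,\dots,\D_{|\pi|}]$ with each $\D_i = \wc^{\delta}\langle \Bstar \cup B_i\rangle$; the $\delta$ is $+$ for the first child of an $\oplus$-node, $-$ for the first child of an $\ominus$-node, and blank otherwise, exactly as in Proposition~\ref{prop:sys_wc}. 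Completeness (every $\D_i$ appearing is itself a left-hand side) is guaranteed because the worklist loop does not stop until every child name generated has been processed.

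For (iii), correctness reduces to the semantic claim that for each name $\D_0 = \wc^{\delta}\langle \Bstar\cup E\rangle$, a permutation $\tau$ with decomposition-tree root $\pi$ lies in $\D_0$ if and only if, writing $\tau = \pi[\alpha_1,\dots,\alpha_n]$, each $\alpha_i$ lies in the corresponding child name $\D_i = \wc^{\delta_i}\langle \Bstar \cup B_i\rangle$. This is exactly the ``propagation of avoidance constraints through the decomposition tree'' step of \cite[Theorem~10]{AA05}, and the content is the embedding lemma: $\pi[\alpha_1,\dots,\alpha_n]$ contains $\gamma$ iff some embedding of $\gamma$ into $\pi$ witnesses it, i.e. iff for one of the finitely many block decompositions of $\gamma$ compatible with $\pi$, each block (normalized) is contained in the matching $\alpha_i$. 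Negating, $\tau$ avoids $\gamma$ iff for \emph{every} such embedding at least one block is avoided — and this ``for all embeddings, exists a slot'' condition is what gets encoded, by distributing the disjunction over slots, into the finite union of tuples $(B_1,\dots,B_n)$ that the algorithm enumerates. Combined with the already-proven identity $\C = \wc\langle\Bstar\rangle$ and induction on tree size (the base case being the leaf $1$, which belongs to $\D_0$ iff $\Bstar$ — equivalently $B$ — contains no permutation of size $1$), this yields that the system describes $\C$.

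\textbf{Main obstacle.} The delicate point is step (iii), and specifically making the embedding/block-decomposition bookkeeping precise: one must define exactly what an embedding of $\gamma$ into $\pi$ is (a block decomposition of $\gamma$ whose ``quotient pattern'' is a pattern of $\pi$ in a way compatible with slot assignment), show these are finite in number and effectively computable, and verify that the induced normalized blocks are themselves elements of (the closure of) the block pool so that the finiteness argument of (i) goes through. The rest — the three equations of $\Sys_{\wc}$, the worklist termination, the shape of the output — is essentially formal given Proposition~\ref{prop:sys_wc} and Remark~\ref{rem:simple_in_WC}. I would therefore isolate the embedding computation as a separate lemma (the ``auxiliary procedures'' alluded to before the theorem statement) and cite or prove it there, keeping the proof of Theorem~\ref{thm:systemeAmbigu} itself at the level of the induction and the closure argument.
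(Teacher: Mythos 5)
Your plan follows the paper's own route: establish $\C=\wc\langle\Bstar\rangle$ and decorate the substitution-decomposition equations of $\wc$ (Proposition~\ref{prop:system}), propagate each avoidance constraint through a root via its embeddings using exactly the ``for all embeddings there is a slot'' to ``there is a tuple of slots'' distribution (Proposition~\ref{prop:restrictions=blocs}), and close the resulting system, with termination guaranteed by the finiteness of the pool of normalized blocks of elements of $\Bstar$. The only detail you leave implicit is why the full set $\Bstar$ persists in every child's constraint set (so that $\D_i=\wc^{\delta}\langle\Bstar\cup B_i\rangle$ rather than merely $\wc^{\delta}\langle B_i\rangle$), which the paper obtains from the trivial embedding sending all of $\gamma$ into a single slot; apart from that, the proposal matches the paper's proof.
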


\begin{algorithm}[htbp]
\DontPrintSemicolon 
\KwData{A finite basis of forbidden patterns defining ${\mathcal C}=Av(B)$ and the finite set $\mathcal{S}_\C$ of simple permutations in $\C$.}  
\KwResult{A system of equations of the form $\D_0 = 1 \cup \bigcup \pi[\D_1, \dots, \D_n]$ defining $\C$.}
\Begin{
	$\mathcal{E}\leftarrow$ \textsc{EqnForClass}$(\wc,\Bstar)$ \tcc*{See Algo.\,\ref{alg:comp-eqn} (p.\pageref{alg:comp-eqn})}
    \While{there is a right-only $\wc^{\delta}\langle E \rangle$ in some equation of $\Sys$} { $\mathcal{E}
      \leftarrow \mathcal{E}~\cup$ \textsc{EqnForClass}($\wc^{\delta}$, $E$)
    } }
\caption{\textsc{AmbiguousSystem}($B,\s_\C$)}\label{alg:sys-ambigu}
\end{algorithm}

An essential remark is that the obtained combinatorial system may be ambiguous, since it may involve unions of sets that are not always disjoint. 
We will tackle the problem of computing a non-ambiguous system in Section~\ref{sec:disambiguation}. 

It is also to note that the result of \textsc{AmbiguousSystem} provides a finite query-complete set containing $\C$:

\begin{cor}
\label{cor:QCS}
Let $\E$ be the system of equations output by \textsc{AmbiguousSystem}$(B,\s_\C)$.
Then the set $\{\D : \D \text{ is the left part of some equation of } \E \}$ is a query-complete set containing \C.
\end{cor}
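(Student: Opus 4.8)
The plan is to deduce Corollary~\ref{cor:QCS} directly from Theorem~\ref{thm:systemeAmbigu} together with the definition of a query-complete set. Write $\cP$ for the set $\{\D : \D \text{ is the left part of some equation of } \E\}$. By Theorem~\ref{thm:systemeAmbigu}, every such $\D$ is of the form $\wc^{\delta}\langle \Bstar \cup B_i\rangle$ for some $\delta \in \{~, +, -\}$ and some finite set $B_i$ of normalized blocks of elements of $\Bstar$; in particular $\cP$ is a finite set of permutation classes (more precisely of restrictions, being intersections of $\wc^{\delta}$ with avoidance sets), and $\C \in \cP$ since the theorem guarantees an equation whose left part is $\C$. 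The two things left to check are that $\cP$ contains~$\C$ (immediate) and that $\cP$ is query-complete.

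For query-completeness, fix $\sigma \in \{\oplus,\ominus\}\cup\SC$ and a property $\D \in \cP$; suppose we are given, for each $i \in [1..|\sigma|]$, which properties of $\cP$ the permutation $\alpha_i$ satisfies, and we must decide whether $\sigma[\alpha_1,\dots,\alpha_{|\sigma|}]$ satisfies $\D$. Since $\cP$ is complete, there is an equation in $\E$ with left part $\D$, and by Theorem~\ref{thm:systemeAmbigu} it has the shape $\D = 1 \cup \bigcup \sigma'[\D_1,\dots,\D_n]$ where the union ranges over a finite set of block decompositions, and (crucially) every $\D_j$ appearing on the right-hand side is again the left part of an equation of $\E$, hence lies in $\cP$. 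Now $\sigma[\alpha_1,\dots,\alpha_{|\sigma|}]$ lies in $\D$ if and only if one of the terms $\sigma'[\D_1,\dots,\D_n]$ of this union ``matches'', i.e.\ $\sigma' = \sigma$, $n = |\sigma|$, and $\alpha_i \in \D_i$ for every $i$. The first two conditions depend only on $\sigma$ and on the (finitely many, syntactically available) terms of the equation; and whether $\alpha_i \in \D_i$ is exactly one bit of the information we are given, because $\D_i \in \cP$. Hence the answer is determined. I should stress that the equation for $\D$ being a valid set-theoretic identity (which is part of the content of Theorem~\ref{thm:systemeAmbigu}, ``describing $\C$'') is what licenses the ``if and only if'' above, even though the union may be ambiguous — ambiguity affects uniqueness of the decomposition, not the correctness of membership testing.

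The main subtlety — really the only place where one must be slightly careful rather than purely mechanical — is the case $\sigma \in \{\oplus,\ominus\}$ versus the equations of $\E$, and the role of the $\oplus$- and $\ominus$-indecomposability constraints recorded in the $\D_i$. When $\sigma = \oplus$, the relevant term in the equation for $\D$ is $\oplus[\D_1,\D_2]$ with $\D_1$ a set of $\oplus$-indecomposable permutations; thus $\oplus[\alpha_1,\alpha_2] \in \D$ iff $\alpha_1 \in \D_1$ and $\alpha_2 \in \D_2$, and again $\alpha_1 \in \D_1$ is one of the bits we are handed since $\D_1 \in \cP$. Note that being $\oplus$-indecomposable is itself encoded in membership in the relevant $\D_1 = \wc^{+}\langle \cdots\rangle$, so no extra information is needed. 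One should also remark that the constant term $1$ in each equation is irrelevant here since $|\sigma| \geq 2$ always (simple permutations have size $\geq 4$, and $\oplus,\ominus$ have size $2$), so $\sigma[\alpha_1,\dots]$ is never the permutation $1$.

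Putting this together gives the proof: list the observations (finiteness and $\C \in \cP$ from Theorem~\ref{thm:systemeAmbigu}; completeness of the system so that every $\D_i$ on a right-hand side is in $\cP$), then verify the query-complete condition by reading off the unique equation for $\D$ and checking membership term by term using the supplied information, with the separate (but trivial) bookkeeping for $\sigma\in\{\oplus,\ominus\}$. I do not expect any real obstacle; the corollary is essentially a restatement of the structural part of Theorem~\ref{thm:systemeAmbigu} in the language of query-complete sets, and the only thing to be careful about is to invoke the set-theoretic validity of the equations (not merely their syntactic form) when asserting the ``iff'' for membership.
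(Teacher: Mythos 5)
The paper itself proves this corollary in one line, so your task is really to supply the details it omits; for a simple root $\sigma\in\SC$ your argument is fine (uniqueness of the substitution decomposition makes $\sigma[\alpha_1,\dots,\alpha_n]\in\sigma[\D_1,\dots,\D_n]$ genuinely equivalent to $\alpha_i\in\D_i$ for all $i$, and completeness of the system puts each $\D_i$ in $\cP$). The gap is exactly at the point you flag and then wave away: the case $\sigma\in\{\oplus,\ominus\}$ with a \emph{decomposable} first argument. Definition~\ref{def:block}ff.\ of query-completeness quantifies over arbitrary $\alpha_i$, but the $\oplus$-terms of the equations have the form $\oplus[\D_1^+,\D_2]$ with $\D_1^+\subseteq\wc^+$. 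If $\alpha_1$ is $\oplus$-decomposable, your criterion ``$\oplus[\alpha_1,\alpha_2]\in\D$ iff some term has $\alpha_1\in\D_1$ and $\alpha_2\in\D_2$'' declares the answer to be \emph{no} for every term, which is simply false: in the $Av(132)$ system of Section~\ref{ssec:petits_ex}, $\oplus[12,1]=123$ belongs to $\wc\langle132\rangle$, yet $12\notin\wc^{+}\langle132\rangle$. The correct use of the set identity requires passing to the canonical decomposition $\oplus[\alpha_1,\alpha_2]=\oplus[\beta_1,\gamma]$ with $\beta_1$ $\oplus$-indecomposable, and the $\cP$-profiles of $\beta_1$ and $\gamma$ are not among the data you are given. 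So ``being $\oplus$-indecomposable is encoded in $\D_1$'' does not rescue the argument; it only covers the case where $\alpha_1$ already is indecomposable.

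Worse, the gap is not merely presentational: Theorem~\ref{thm:systemeAmbigu} does not guarantee that the unsigned avoidance information needed to repair the test is available in $\cP$. For the class of Section~\ref{ssec:grand_ex}, deciding whether $\ominus[\alpha_1,\alpha_2]$ avoids $2341$ requires knowing whether $\alpha_1$ avoids $123$, but the only left part of the system recording $123$ is $\wc^{-}\langle123\rangle$, which conflates that avoidance with $\ominus$-indecomposability. The permutations $312$ and $4123$ satisfy exactly the same left-part properties of that system (both are in $\wc\langle1243,2341\rangle$, $\wc^{+}\langle1243,2341\rangle$, $\wc\langle132,2341\rangle$, $\wc^{+}\langle132,2341\rangle$ and in none of the others, both being $\ominus$-decomposable), yet $\ominus[312,1]=4231$ avoids $2341$ while $\ominus[4123,1]=52341$ contains it. So with the literal definition of query-completeness (arbitrary $\alpha_i$) the statement cannot be established by your route, and indeed seems to require either restricting the definition to canonical inflations (first argument $\oplus$- resp.\ $\ominus$-indecomposable, which is how query-complete sets are actually used in \cite{BHV08a}) or an additional argument that $\cP$ contains the relevant unsigned restrictions. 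Your proof should at minimum state such a restriction explicitly rather than assert the ``iff'' for arbitrary $\alpha_1$.
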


\begin{proof}
This a direct consequence of the form of \E described in Theorem~\ref{thm:systemeAmbigu}.
\end{proof}

\subsection{A first system of equations}
Consider a permutation class ${\mathcal C}$, whose basis is
$B$ and which is not substitution-closed.  We compute a system describing
\C by adding constraints to the system obtained for \wc, as
in~\cite{AA05}.  
We denote by $\Bstar$ the subset of non-simple permutations of~$B$ \label{page:Bstar}
and  by $\cA \langle E \rangle$ the set of permutations
of $\cA$ that avoid every pattern in $E$, for any set $\cA$ of permutations and any set $E$ of
patterns.  Note that we have $(\cA \langle E
\rangle)^+ = \cA^+ \langle E \rangle$: the corresponding set is the one of 
permutations of $\cA$ that avoid $E$ and that are
$\oplus$-indecomposable. The same goes for $\cA^-$.

\begin{prop}\label{prop:system}
Let $\C$ be a permutation class, that contains $12$ and $21$. We have that
$\C^{\delta} = \wc^{\delta} \langle \Bstar \rangle$ for $\delta \in \{~, +, -\}$.
Moreover, 
        \begin{eqnarray}
          \wc \langle \Bstar \rangle &=& 1\ \uplus \ \oplus[\wc^+, \wc]\langle \Bstar \rangle\  \uplus \ \ominus[\wc^-, \wc]\langle \Bstar \rangle \ \uplus \textstyle\biguplus_{\pi \in \s_\C} \pi[\wc, \dots, \wc]\langle \Bstar \rangle \label{eqn:const1}\\
          \wc^+ \langle \Bstar \rangle &=& 1\  \uplus \ \ominus[\wc^-, \wc]\langle \Bstar \rangle\  \uplus\  \textstyle\biguplus_{\pi \in \s_\C} \pi[\wc, \dots, \wc]\langle \Bstar \rangle \label{eqn:const2}\\
          \wc^- \langle \Bstar \rangle &=& 1\  \uplus\  \oplus[\wc^+, \wc]\langle \Bstar \rangle\  \uplus\  \textstyle\biguplus_{\pi \in \s_\C} \pi[\wc, \dots, \wc]\langle \Bstar \rangle, \label{eqn:const3}
        \end{eqnarray}
all these unions being disjoint.
\end{prop}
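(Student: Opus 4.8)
The proposition has two parts: the identity $\C^{\delta} = \wc^{\delta}\langle \Bstar\rangle$ for each $\delta \in \{~,+,-\}$, and the three disjoint-union equations. I would prove the identity first, since the equations follow from it by combining Proposition~\ref{prop:sys_wc} with a ``restriction'' argument.

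First I would establish $\C = \wc\langle\Bstar\rangle$. The inclusion $\C \subseteq \wc\langle\Bstar\rangle$ is easy: $\C \subseteq \wc$ by definition of the substitution closure, and every $\sigma \in \C = Av(B)$ avoids every pattern in $B$, in particular every pattern in $\Bstar \subseteq B$. For the reverse inclusion, take $\sigma \in \wc\langle\Bstar\rangle$; I must show $\sigma$ avoids every $\beta \in B$, i.e. also every \emph{simple} $\beta \in B \setminus \Bstar$. Suppose for contradiction that $\sigma$ contains such a simple $\beta$. Then $\beta \preccurlyeq \sigma \in \wc$, and since $\beta$ is simple and $\sigma \in \wc$, Remark~\ref{rem:simple_in_WC} tells us that $\beta$ is a simple permutation of $\C$ itself, so $\beta \in \s_\C$. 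But $\beta$ is also in the basis $B$ of $\C$, meaning $\beta \notin \C$ — contradicting $\beta \in \s_\C \subseteq \C$. Hence $\sigma$ avoids all of $B$, so $\sigma \in Av(B) = \C$. This gives $\C = \wc\langle\Bstar\rangle$. The analogous statements for $\delta \in \{+,-\}$ follow by intersecting both sides with the set of $\oplus$-indecomposable (resp.~$\ominus$-indecomposable) permutations, using the remark made just before the proposition that $(\cA\langle E\rangle)^+ = \cA^+\langle E\rangle$ and that $\C^+ = \C \cap \{\text{$\oplus$-indec.}\}$.

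For the three equations, I would start from the disjoint-union decomposition of $\wc$, $\wc^+$, $\wc^-$ given in Proposition~\ref{prop:sys_wc} (Equations~\eqref{eqn:Wc1}–\eqref{eqn:Wc3}), and intersect each with the set $\langle\Bstar\rangle$ of permutations avoiding $\Bstar$. The key point is that intersecting a \emph{disjoint} union with a fixed set yields again a disjoint union of the intersected pieces: for sets $A_i$ pairwise disjoint and any $X$, the sets $A_i \cap X$ are pairwise disjoint and $\bigl(\biguplus_i A_i\bigr)\cap X = \biguplus_i (A_i \cap X)$. Applying this to \eqref{eqn:Wc1} with $X = \langle\Bstar\rangle$: the term $\{1\}$ becomes $\{1\}\cap\langle\Bstar\rangle = \{1\}$ (the permutation $1$ avoids every permutation of size $\geq 2$, and $\Bstar$ contains no permutation of size $1$ since $1$ is not in the basis of any class containing $12$); the term $\oplus[\wc^+,\wc]$ becomes $\oplus[\wc^+,\wc]\langle\Bstar\rangle$ by definition of the notation $\cdot\langle\Bstar\rangle$; similarly for the $\ominus$ term and each $\pi[\wc,\dots,\wc]$ term. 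This yields~\eqref{eqn:const1}, and \eqref{eqn:const2}, \eqref{eqn:const3} follow the same way from \eqref{eqn:Wc2}, \eqref{eqn:Wc3}.

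The main (mild) obstacle is simply being careful about the edge case of size-$1$ permutations and the convention $\varepsilon\notin\C$: one needs that $1 \notin \Bstar$ so the singleton term survives the intersection, and that the left-hand side is correctly $\wc^\delta\langle\Bstar\rangle$ rather than $\C^\delta$ — but the first part of the proposition identifies these. Everything else is a routine unfolding of the definition of $\cA\langle E\rangle$ and the elementary fact that intersection distributes over disjoint union while preserving disjointness. I do not anticipate any genuine difficulty; the substance is entirely in the clean application of Remark~\ref{rem:simple_in_WC}.
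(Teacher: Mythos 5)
Your proposal is correct and follows essentially the same route as the paper: the identity $\C=\wc\langle\Bstar\rangle$ is proved by showing that any simple element of $B$ cannot occur in a permutation of $\wc$ (via Remark~\ref{rem:simple_in_WC} and the fact that $\wc$ is itself a class), and the three equations are then obtained by restricting the disjoint decomposition of Proposition~\ref{prop:sys_wc} to $\langle\Bstar\rangle$. The paper phrases the reverse inclusion directly rather than by contradiction and leaves the intersection-distributes-over-disjoint-union step implicit, but these are only presentational differences.
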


\begin{proof}
Let $\sigma \in \C$, then $\sigma \in \wc$ and $\sigma$ avoids $\Bstar$, 
thus $\sigma \in \wc \langle \Bstar \rangle$.
Conversely, let  $\sigma \in \wc \langle \Bstar \rangle$ and let $\pi \in B$. 
If  $\pi \in \Bstar$ then $\sigma$ avoids $\pi$. 
Otherwise, $\pi$ is simple and $\pi \notin \C$. 
Because $\s_\wc = \s_\C$ by Remark~\ref{rem:simple_in_WC}, this implies that $\pi \notin \wc$.
Since $\sigma \in \wc$, $\sigma$ avoids $\pi$. Hence $\sigma \in \C$. 
Finally $\wc \langle \Bstar \rangle = \C$, thus $\wc^{\delta} \langle \Bstar \rangle = \C^{\delta}$.
Then the result follows from Proposition~\ref{prop:sys_wc}. 
\end{proof}

Equations~\eqref{eqn:const1} to~\eqref{eqn:const3} do not provide a combinatorial specification 
because the terms $\pi[\wc, \dots, \wc]\langle \Bstar \rangle$ are not simply expressed from the terms appearing 
on the left-hand side of these equations. 
To solve this problem, 
instead of decorating all terms on the right-hand side of Equations~\eqref{eqn:Wc1}--\eqref{eqn:Wc3} with constraint $\langle \Bstar \rangle$ 
(like in Equations~\eqref{eqn:const1}--\eqref{eqn:const3}), 
we propagate the constraint $\langle \Bstar \rangle$ into the subtrees. 
More precisely, by Lemma~18 of~\cite{AA05}, sets $ \pi[\wc, \dots, \wc]\langle \Bstar \rangle$
can be expressed as union of smaller sets: 
\begin{equation}
\pi[\wc, \dots, \wc]\langle \Bstar \rangle = \textstyle\bigcup_{i=1}^{\ell} \pi[\wc\langle E_{i,1} \rangle,\wc\langle E_{i,2} \rangle,\ldots,\wc\langle E_{i,k} \rangle] 
\label{eq:propagation}
\end{equation}
where the $E_{i,j}$ are sets of permutations which are patterns of some permutations of $\Bstar$. 
For instance, with $\C = Av(231)$, we have $\Bstar = \{231\}$, and 
$\ominus[\wc^-, \wc]\langle 231 \rangle = \ominus[\wc^-\langle 12 \rangle, \wc\langle 231 \rangle]$. 
Note that the set $\wc^-\langle 12 \rangle$, which is not a part of the initial system,
has appeared on the right-hand side of this equation, 
and we now need a new equation to describe it. 
In the general case, applying Equation~\eqref{eq:propagation} in the system of Proposition~\ref{prop:system}
introduces sets of the form $\wc^{\delta}\langle E_{i,j} \rangle$ on the right-hand side of an equation of the system that do not appear on the left-hand side of any equation. 
The reason is that $\{\C, \C^+,\C^-\}$ is not query-complete in general. 
Propagating the constraints in the subtrees as described below allows to determine a query-complete set which contains $\C$ 
(see Corollary~\ref{cor:QCS}).

We call \emph{right-only sets} the sets of the form $\wc^{\delta}\langle E_{i,j} \rangle$ which appear only on the right-hand side of an equation of the system. 
For each such set, we need to add a new equation to the system, 
starting from Equation~\eqref{eqn:Wc1},~\eqref{eqn:Wc2} or~\eqref{eqn:Wc3} depending on $\delta$, 
and propagating constraint $E_{i,j}$ instead of $\Bstar$. 
This may create new right-only terms in these new equations, and they are treated recursively in the same way. 
This process terminates, since the $E_{i,j}$ are sets of patterns of elements of $\Bstar$, 
and there is only a finite number of such sets (as $B$ is finite). 

The key to the precise description of the sets $E_{i,j}$, and to their effective computation, 
is given by the \emph{embeddings} of permutations $\gamma$ that are patterns of some $\beta \in \Bstar$ 
into the simple permutations $\pi$ of \C. 

\subsection{Embeddings: definition and computation}
\label{ssec:embeddings}

Recall that $\varepsilon$ denotes the empty permutation, \emph{i.e.} the permutation of size $0$. 
We take the convention $\cA \langle \varepsilon \rangle = \emptyset$.

\medskip

A \emph{generalized substitution} (also called \emph{lenient inflation} in~\cite{BHV08a}) $\sigma\{\pi^{1}, \pi^{2} ,\ldots, \pi^{n}\}$ is defined like a substitution 
with the particularity that any $\pi^i$ may be the empty permutation. 
Note that $\sigma[\pi^{1}, \pi^{2} ,\ldots, \pi^{n}]$ necessarily contains $\sigma$
whereas $\sigma\{\pi^{1}, \pi^{2} ,\ldots, \pi^{n}\}$ may avoid $\sigma$.
For instance, the generalized substitution $ 1\, 3\, 2 \{2\, 1, \varepsilon, 1\}$ gives the permutation $ 2\, 1\, 3$ which avoids $ 1\, 3\, 2$.

Thanks to generalized substitutions, we define the notion of embedding, 
which expresses how a pattern $\gamma$ can be involved in a permutation whose decomposition tree has a root $\pi$:
\begin{defi}\label{defi:assignation}
Let $\pi=\pi_1\dots\pi_n$ and $\gamma$ be two permutations of size $n$ and $p$ respectively
and $P_\gamma$ the set of intervals\footnote{Recall that in this article,
an interval of a permutation is a set of \emph{indices} corresponding to a block of the permutation
(see Definition \ref{defn:interval} p.\pageref{defn:interval}).}
of $\gamma$, including the trivial ones. 
An {\em embedding of $\gamma$ in~${\pi=\pi_1\dots\pi_n}$} is a map $\alpha$ from $\{1, \ldots, n\}$ to $P_\gamma$ such that:
\begin{itemize}\setlength{\itemsep}{-3pt}
\item if the intervals $\alpha(i)$ and $\alpha(j)$ are not empty, and $i<j$, then $\alpha(i)$ consists of smaller indices than $\alpha(j)$;
\item as a word, $\alpha(1) \ldots \alpha(n)$ is a factorization of the word $1 \ldots |\gamma|$ (which may include empty factors).
\item it holds that $\pi\{\gamma_{\alpha(1)},\dots,\gamma_{\alpha(n)}\}=\gamma$ 
(see Definition~\ref{def:subsequence} (p.\pageref{def:subsequence}) for the definition of $\gamma_{\alpha(i)}$).
\end{itemize}
\end{defi}

\begin{example}
For any permutations $\gamma$ and $\pi$, $\alpha : 
\begin{cases}
	  1   & \mapsto [1..|\gamma|]\\
   	  k>1 & \mapsto \emptyset
\end{cases}
$ is an embedding of $\gamma$ in~$\pi$. Indeed $\gamma_{[1..|\gamma|]} = \gamma$ and $\pi\{\gamma,\varepsilon,\dots,\varepsilon\}=\gamma$.
\end{example}

Note that if we denote the non-empty images of $\alpha$ by $\alpha^1,\dots,\alpha^N$ and if we remove from $\pi$ the $\pi_i$ such that $\alpha(i)=\varepsilon$,
we obtain a pattern $\sigma$ of $\pi$ such that $\gamma = \sigma [\gamma_{\alpha^1},\dots,\gamma_{\alpha^N} ]$. 
But this pattern $\sigma$ may occur at several places in $\pi$ so a block decomposition
$\gamma = \sigma [\gamma_{\alpha^1},\dots,\gamma_{\alpha^N} ]$ may correspond to several embeddings of $\gamma$ in~$\pi$.

\begin{example}\label{ex:block-e}
There are $12$ embeddings of $\gamma = 5\,4\,6\,3\,1\,2$ into $\pi = 3\,1\,4\,2$, shown in Table~\ref{table:embeddings}. 
For instance, when writing $\gamma$ as the substitution ${\ominus[3241,12]}$, 
they are derived from the generalized substitutions $\pi\{3241,12,\varepsilon,\varepsilon\}$, $\pi\{3241,\varepsilon,\varepsilon,12\}$ 
and $\pi\{\varepsilon,\varepsilon,3241,12\}$, corresponding to the three occurrences of $21$ in $\pi$. 
But when writing $\gamma$ as $312[3241,1,1]$, 
since $312$ has only one occurrence in $\pi$, 
only one embedding is derived, which comes from $\pi\{3241,1,\varepsilon,1\}$.
\end{example}

\begin{table}[htbp]
\begin{center}
\begin{tabular}{rl||c|c|c|c|}
\cline{3-6}
&& $\gamma_{\alpha(1)}$ & $\gamma_{\alpha(2)}$ & $\gamma_{\alpha(3)}$ & $\gamma_{\alpha(4)}$ \\
\hline
\multicolumn{1}{ |c } {$\gamma = 1[546312]$} & $ = \pi\{546312,\varepsilon,\varepsilon,\varepsilon\}$ & $546312$ & $\varepsilon$ & $\varepsilon$ & $\varepsilon$\\
	\cline{3-6}
\multicolumn{1}{ |c }{} & $ = \pi\{\varepsilon,546312,\varepsilon,\varepsilon\}$ & $\varepsilon$ & $546312$ & $\varepsilon$ & $\varepsilon$\\
	\cline{3-6}
\multicolumn{1}{ |c }{} & $ = \pi\{\varepsilon,\varepsilon,546312,\varepsilon\}$ & $\varepsilon$ & $\varepsilon$ & $546312$ & $\varepsilon$\\
	\cline{3-6}
\multicolumn{1}{ |c }{} & $ = \pi\{\varepsilon,\varepsilon,\varepsilon,546312\}$ & $\varepsilon$ & $\varepsilon$ & $\varepsilon$ & $546312$\\
\hline
\multicolumn{1}{ |c }{$\gamma = \ominus[213,312]$} & $ = \pi\{213,312,\varepsilon,\varepsilon\}$ & $213$ & $312$ & $\varepsilon$ & $\varepsilon$ \\
	\cline{3-6}
\multicolumn{1}{ |c }{} & $ = \pi\{213,\varepsilon,\varepsilon,312\}$ & $213$ & $\varepsilon$ & $\varepsilon$ & $312$ \\
	\cline{3-6}
\multicolumn{1}{ |c }{} & $ = \pi\{\varepsilon,\varepsilon,213,312\}$ & $\varepsilon$ & $\varepsilon$ & $213$ & $312$ \\
\hline
\multicolumn{1}{ |c }{$\gamma = \ominus[3241,12]$} & $ = \pi\{3241,12,\varepsilon,\varepsilon\}$ & $3241$ & $12$ & $\varepsilon$ & $\varepsilon$ \\
	\cline{3-6}
\multicolumn{1}{ |c }{} & $ = \pi\{3241,\varepsilon,\varepsilon,12\}$ & $3241$ & $\varepsilon$ & $\varepsilon$ & $12$ \\
	\cline{3-6}
\multicolumn{1}{ |c }{} & $ = \pi\{\varepsilon,\varepsilon,3241,12\}$ & $\varepsilon$ & $\varepsilon$ & $3241$ & $12$ \\
\hline
\multicolumn{1}{ |c }{$\gamma = 231[21,1,312]$} & $ = \pi\{21,\varepsilon,1,312\}$ & $21$ & $\varepsilon$ & $1$ & $312$ \\
\hline
\multicolumn{1}{ |c }{$\gamma = 312[3241,1,1]$} & $ = \pi\{3241,1,\varepsilon,1\}$ & $3241$ & $1$ & $\varepsilon$ & $1$ \\
\hline
\end{tabular}
\end{center}
\caption{The embeddings of $\gamma = 5\,4\,6\,3\,1\,2$ into $\pi = 3\,1\,4\,2$.}
\label{table:embeddings}
\end{table}

Note that this definition of embeddings conveys the
same notion as in~\cite{AA05}, but it is formally different and it will turn out to be more
adapted to the definition of the sets $E_{i,j}$ in Section~\ref{ssec:propagation}.

For now, we present how to compute the embeddings of some permutation $\gamma$ into a permutation $\pi$. 
This is done with \textsc{AllEmbeddings} (Algo.~\ref{alg:all_embeddings}) in two main steps, 
as suggested by the two parts of Table~\ref{table:embeddings}.
First we compute all block decompositions of $\gamma$ 
(the left part of the table shows only some block decompositions of $\gamma=5\,4\,6\,3\,1\,2$, 
namely those that can be expressed as a generalized substitution in $\pi = 3\,1\,4\,2$).
Then for each block decomposition of $\gamma$, we compute all the embeddings of $\gamma$ into $\pi$ which correspond to this block decomposition 
(this is the right part of the table). 

\SetKwBlock{CBDfunc}{\textsc{BlockDecompositions}}{end}
\SetKwBlock{CEfunc}{\textsc{Embeddings}}{end}
\SetKwBlock{Intfunc}{\textsc{Intervals}}{end}
\begin{algorithm}[htbp]
\DontPrintSemicolon 
\KwData{Two permutations $\gamma$ and $\pi$}
\KwResult{The set of embeddings of $\gamma$ into $\pi$}
\Begin{
$\mathcal{E} \leftarrow \emptyset$ \;
$\mathcal{D} \leftarrow$ \textsc{BlockDecompositions}$(\gamma)$ \tcc*{See Algo.~\ref{alg:blockdecompositions}}
\ForEach{$d \in \mathcal{D}$}
{$\mathcal{E} \leftarrow \mathcal{E} \, \cup$ \textsc{Embeddings}$(d,\pi)$ \tcc*{See Algo.~\ref{alg:embeddings}}
}
\Return $\mathcal{E}$\;
}
\caption{\textsc{AllEmbeddings}$(\gamma,\pi)$}\label{alg:all_embeddings}
\end{algorithm}

\medskip

\begin{algorithm}[htbp]
\DontPrintSemicolon 
\KwData{A permutation $\gamma$}  
\KwResult{The set $\mathcal{D}$ of block decompositions of $\gamma$}
\Begin{ 
$\mathcal{D} \leftarrow \emptyset$; \quad 
$\mathcal{P} \leftarrow$ \textsc{Intervals}$(\gamma,1)$\;
\ForEach{$u=\Big( (i_1,j_1),\dots,(i_m,j_m) \Big) \in \mathcal{P}$}
{$\mathcal{P} \leftarrow \mathcal{P} \setminus \{u\}$ \;
\lIf{$j_m = |\gamma|$}{$\mathcal{D} \leftarrow \mathcal{D} \cup \{u\}$}
\Else{$\mathcal{S} \leftarrow$ \textsc{Intervals}$(\gamma,j_m+1)$\;
$\mathcal{P} \leftarrow \mathcal{P} \cup \{u \cdot s \mid s \in \mathcal{S} \}$
}
}
\Return  $\mathcal{D}$\;
}

\bigskip

\tcc{Returns the set of intervals of $\gamma$ of the form $(i,j)$}
\SetKwSty{textsc}
\Intfunc(\params{$\gamma,i$}){\SetKwSty{textbf}
$\mathcal{I} \leftarrow \emptyset$\;
$max \leftarrow \gamma_i$;\quad $min \leftarrow \gamma_i$\;
\For{$j$ from $i$ to $|\gamma|$}
{
 $max\leftarrow \max(max,\gamma_j)$;\quad $min\leftarrow \min(min,\gamma_j) $\;
\lIf{$max-min = j-i$}{$\mathcal{I} \leftarrow \mathcal{I} \cup \{(i,j)\}$}
}
\Return $\mathcal{I}$
}

\caption{\textsc{BlockDecompositions}$(\gamma)$}\label{alg:blockdecompositions}
\end{algorithm}

The procedure \textsc{BlockDecompositions} (Algo.~\ref{alg:blockdecompositions}) finds all the
block decompositions of $\gamma$.  It needs to compute all the
intervals of $\gamma$. Such intervals can be described as pairs of
indices $(i,j)$ with $i \leq j$ such that $\max_{i \leq k \leq
  j}(\gamma_k) - \min_{i \leq k \leq j}(\gamma_k)= j-i$. Denoting by
$p$ the size of $\gamma$, this remark allows to compute easily all the
intervals starting at $i$, for each $i \in [1..p]$ -- see \textsc{Intervals}$(\gamma,i)$.  
Then, \textsc{BlockDecompositions} builds the set $\mathcal{D}$
of all sequences of intervals of the form
$\big((i_1,j_1),\dots,(i_m,j_m) \big)$ with $i_1=1$, $j_m=p$, and
$i_{k+1}=j_k + 1$ for all $k<m$. These sequences correspond exactly to
the block decompositions of $\gamma$. They are computed iteratively, starting from all
the intervals of the form $(1,i)$ and examining how they can be
extended to a sequence of intervals in $\mathcal{D}$.
To this end, note that any sequence
$\big((i_1,j_1),\dots,(i_m,j_m) \big)$ as above but such that $j_m \neq p$
is the prefix of at least one sequence of $\mathcal{D}$, since for
every $i \in [1..p]$, at least $(i,i)$ is an interval of $\gamma$.

For each $i \in [1..p]$, computing \textsc{Intervals}$(\gamma,i)$ is
done in $\mathcal{O}(p)$, so $\mathcal{O}(p^2)$ is enough to compute
once and for all the results of \textsc{Intervals}$(\gamma,i)$ for
all $i \in [1..p]$ (provided we store them).  The computation of all block decompositions of
$\gamma$ with \textsc{BlockDecompositions}$(\gamma)$ then
costs $\mathcal{O}(p\, 2^p)$. 
Indeed there are at most $2^{p-1}$ such sequences of $\mathcal{D}$ and each sequence of $\mathcal{D}$ contains at most $p$ intervals.
At each step, the algorithm extends a sequence already built.
Thus the (amortized) overall complexity is $\mathcal{O}(p\, 2^p)$ and this bound is tight.

\begin{algorithm}[htbp]
\DontPrintSemicolon 
\KwData{A permutation $\pi$ of size $n$ and a block decomposition $d \in \mathcal{D}$ of $\gamma$, $d = \Big( (i_1,j_1),\dots,(i_m,j_m) \Big)$} 
\KwResult{The set of embeddings of $\gamma$ into $\pi$ which correspond to $d$}

\Begin{
\If(\tcc*[f]{No embedding of $\gamma$ into $\pi$ corresponding to $d$}){$m > n$}
	{\Return $\emptyset$}
\Else{
\tcc{$\sigma$ is the skeleton such that $d=\sigma[\gamma^{(1)}, \ldots , \gamma^{(m)}]$}
$\sigma \leftarrow$ $\gamma_{\{i_1,i_2,  \ldots, i_m\}}$ \;
$\mathcal{E} \leftarrow \emptyset$\;
\ForEach{subset $S = \{s_1, \ldots, s_m\}$ of $\{1,2,\ldots, n\}$ with $s_1 < \ldots < s_m$}
{\If{$\pi_{\{s_1,s_2,  \ldots, s_m\}} = \sigma$}{
$\alpha \leftarrow $ embedding of $\gamma$ in $\pi$ such that
$\begin{cases}
\alpha(s_k) = [i_k..j_k] \text{ for } 1\leq k \leq m \\
\alpha(i) = \varepsilon \text{ for } i \notin S
\end{cases}$\;
$\mathcal{E} \leftarrow \mathcal{E} \cup \{\alpha\}$ \;
}
}
\Return $\mathcal{E}$}
}

\caption{\textsc{Embeddings}$(d,\pi)$}\label{alg:embeddings}
\end{algorithm}

The procedure \textsc{Embeddings} (Algo.~\ref{alg:embeddings}) finds the embeddings of $\gamma$ in $\pi$ which correspond to a given block-de\-com\-po\-si\-tion of $\gamma$. 
The output $\mathcal{D}$ of \textsc{BlockDecompositions} corresponds to all the substitutions $\sigma[\gamma^{(1)}, \ldots , \gamma^{(m)}]$ which are equal to $\gamma$. 
For each $d \in \mathcal{D}$ we first determine the corresponding {\em skeleton} $\sigma$ defined as the normalization of a sequence $s_1 \ldots s_m$ of $m$ integers, 
where each $s_i$ is an element of $\gamma$ falling into the $i$-th block of this decomposition. 
Then, we compute the set of occurrences of $\sigma$ in $\pi$, 
since they are in one-to-one correspondence with embeddings of $\gamma$ into $\pi$ which corresponds to the given substitution $\gamma = \sigma[\gamma^{(1)}, \ldots , \gamma^{(m)}]$. 
These occurrences are naively computed by testing, for every subsequence of $m$ elements of~$\pi$, whether it is an occurrence of $\sigma$.

The preprocessing part of \textsc{Embeddings} (Algo.~\ref{alg:embeddings}), which
consists in the computation of the skeleton~$\sigma$, costs $\mathcal{O}(m)$.  
The examination of all possible occurrences of $\sigma$ in $\pi$ is then
performed in $\mathcal{O}(m \, {\binom{n}{m}})$, where $n$ is the size of $\pi$.

In total, computing the set $\mathcal{D}$ of block decompositions of
$\gamma$ is performed in $\mathcal{O}(p\, 2^p)$. The set $\mathcal{D}$
contains at most ${\binom{p-1}{m-1}}$ block decompositions in $m$
blocks, so that the complexity of the main loop of 
\textsc{AllEmbeddings} 
is at most $\sum_{m=1}^{p} m
{\binom{n}{m}}{\binom{p-1}{m-1}} \leq 
p \, 2^{n+p-1}$.
Note that it can be reduced to $\min(p,n) \, 2^{n + \min(p,n)-1}$, by discarding all block decompositions of $\gamma$ in $m>n$ blocks, 
since these will never be used in an embedding of $\gamma$ into $\pi$. 
The upper bound $\sum_{m=1}^{p} m
{\binom{n}{m}}{\binom{p-1}{m-1}}$ is tight, as can be seen with $p \leq n$, $\gamma = 12 \ldots p$ and $\pi = 12\ldots n$. 

\medskip

In the next section, we explain how to use embeddings to propagate the pattern avoidance constraints in the subtrees.

\subsection{Propagating constraints}
\label{ssec:propagation}
 
To compute our combinatorial system,
we compute equations for sets $\wc^{\delta}\langle E\rangle$ (initially for $\wc\langle \Bstar \rangle$, that is \C)
starting from Equation~\eqref{eqn:Wc1},~\eqref{eqn:Wc2} or~\eqref{eqn:Wc3} (p.\pageref{eqn:Wc1}). 
For every set $\pi[\wc_1, \dots, \wc_n]$ that appears on the right-hand side of the equation, 
we push the pattern avoidance constraints of $E$ in the subtrees. 
This is achieved using embeddings of excluded patterns in the root $\pi$.  For
instance, assume that $\gamma= 5\,4\,6\,3\,1\,2 \in B^\star$ and $\cal
S_{\cal C}=\{3142\}$, and consider $3142[\wc, \wc,\wc, \wc]\langle \gamma \rangle$.
The embeddings of~$\gamma$ in~$3142$ indicates how the pattern $\gamma$ can be
found in the subtrees in $3142[\wc, \wc,\wc,\wc]$. 
The first embedding of Example~\ref{ex:block-e} indicates that the full pattern $\gamma$ can appear 
all included in the first subtree.
On the other hand, the last embedding of the same example
tells us that $\gamma$ can spread over all the subtrees of~$3142$ except
the third one. In order to avoid this particular embedding of~$\gamma$, it
is enough to avoid one of the induced pattern $\gamma_I$ in one of the
subtrees. However, in order to ensure that~$\gamma$ is avoided, the
constraints resulting from all the embeddings must be considered and
merged. This is formalized in Proposition~\ref{prop:restrictions=blocs}.

\begin{prop}\label{prop:restrictions=blocs}
Let $\pi$ be a simple permutation of size $n$ and $\C_1, \dots, \C_n$ be sets of permutations. 
For any permutation $\gamma$, the set
$\pi[\C_1, \dots, \C_n] \langle \gamma \rangle$ rewrites as a union of sets of the form $\pi[\D_1, \dots, \D_n]$ where, for all $i$, 
$\D_i = \C_i \langle \gamma, \dots \rangle$ and the restrictions appearing after $\gamma$ (if there are any) are patterns of $\gamma$ 
corresponding to normalized blocks of $\gamma$. 

More precisely, we have
\begin{equation} \label{eq:propagate}
        \pi[\C_1, \dots ,\C_n] \langle \gamma \rangle =
        \bigcup_{(k_1, \dots, k_\ell) \in  K^\pi_\gamma} \pi[\C_1 \langle E_{1,k_1 \dots k_\ell} \rangle, \dots ,\C_n \langle E_{n,k_1 \dots k_\ell} \rangle]
\end{equation}
where $K^\pi_\gamma =\{(k_1, \dots, k_\ell) \in [1..n]^\ell\ |\ \forall i,\ \gamma_{\alpha_i(k_i)} \neq \varepsilon \text{ and } \gamma_{\alpha_i(k_i)} \neq 1\}$
and $E_{m,k_1 \dots k_\ell }= \{ \gamma_{\alpha_i(k_i)}\ |\ i \in [1..\ell] \text{ and } k_i=m \}$,
$\{\alpha_1, \dots, \alpha_\ell\}$ being the set of embeddings of $\gamma$ in $\pi$.

Similarly, $\oplus[\C^+_1, \C_2] \langle \gamma \rangle$ (resp.\ $\ominus[\C^-_1, \C_2] \langle \gamma \rangle$) rewrites as
the union over $K^{12}_\gamma$ (resp. $K^{21}_\gamma$) of sets $\oplus[\D_1^+, \D_2]$ (resp.~$\ominus[\D_1^-, \D_2]$)
with $\D_i = \C_i \langle E_{i,k_1 \dots k_\ell} \rangle$.
\end{prop}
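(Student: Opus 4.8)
The plan is to prove Equation~\eqref{eq:propagate} directly from the definition of embeddings, showing the equality of the two sets by double inclusion, and then to observe that the $\oplus$ and $\ominus$ cases follow by the same argument (using $12$ and $21$ in the role of $\pi$, and keeping the $\oplus$- resp.\ $\ominus$-indecomposability constraint on the first component untouched throughout, since it does not interact with pattern-avoidance constraints). The crucial point to isolate at the outset is a reformulation of pattern containment in a substitution: for a permutation $\tau = \pi[\beta_1,\dots,\beta_n]$ with $\pi$ simple of size $n$, a pattern $\gamma$ occurs in $\tau$ \emph{if and only if} there is an embedding $\alpha$ of $\gamma$ in $\pi$ such that, for every $i$, the normalized block $\gamma_{\alpha(i)}$ occurs in $\beta_i$. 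One direction of this is essentially the definition of embedding combined with the definition of generalized substitution; the other direction requires looking at an occurrence $I$ of $\gamma$ in $\tau$, grouping the indices of $I$ according to which block $S^i$ of $\tau$ they land in, checking that this grouping is an interval decomposition of $\gamma$, and verifying that the resulting map $i \mapsto \{\text{indices of }\gamma\text{ in block }i\}$ is indeed an embedding (the interval property of each group follows from the fact that $\pi$ is simple, hence the pattern induced by the non-empty groups must be $\pi$ itself with empty parts deleted, so consecutive-index groups map to consecutive-value groups).

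Granting this reformulation, the proof of \eqref{eq:propagate} goes as follows. A permutation $\tau \in \pi[\C_1,\dots,\C_n]\langle\gamma\rangle$ is of the form $\pi[\beta_1,\dots,\beta_n]$ with $\beta_m \in \C_m$, and it avoids $\gamma$. By the reformulation, for \emph{every} embedding $\alpha_i$ of $\gamma$ in $\pi$ there is at least one index $k_i \in [1..n]$ with $\gamma_{\alpha_i(k_i)} \not\preccurlyeq \beta_{k_i}$; necessarily $\gamma_{\alpha_i(k_i)} \neq \varepsilon$ and $\gamma_{\alpha_i(k_i)} \neq 1$ (the empty permutation and the permutation $1$ being patterns of everything, $\varepsilon$ by convention and $1$ because the $\beta_m$ are non-empty; here one uses that every $\beta_m$ in a substitution $\pi[\dots]$ has size at least $1$). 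Choosing one such $k_i$ for each $i \in [1..\ell]$ gives a tuple $(k_1,\dots,k_\ell) \in K^\pi_\gamma$, and by construction $\beta_m$ avoids every element of $E_{m,k_1\dots k_\ell}$ for each $m$, i.e.\ $\beta_m \in \C_m\langle E_{m,k_1\dots k_\ell}\rangle$, so $\tau$ lies in the corresponding term of the right-hand side. Conversely, if $\tau = \pi[\beta_1,\dots,\beta_n]$ with $\beta_m \in \C_m\langle E_{m,k_1\dots k_\ell}\rangle$ for some fixed $(k_1,\dots,k_\ell) \in K^\pi_\gamma$, then for every embedding $\alpha_i$ we have $\gamma_{\alpha_i(k_i)} \in E_{k_i, k_1\dots k_\ell}$, hence $\gamma_{\alpha_i(k_i)} \not\preccurlyeq \beta_{k_i}$; by the reformulation $\tau$ cannot contain $\gamma$ via the embedding $\alpha_i$, and since this holds for all $i$, $\tau$ avoids $\gamma$, so $\tau \in \pi[\C_1,\dots,\C_n]\langle\gamma\rangle$. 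The two inclusions give \eqref{eq:propagate}; the structural statement that each $\D_i$ has the claimed shape $\C_i\langle\gamma,\dots\rangle$ with the extra restrictions being normalized blocks of $\gamma$ is immediate, since every element of every $E_{m,\dots}$ is of the form $\gamma_{\alpha_i(k_i)}$ for an interval $\alpha_i(k_i)$ of $\gamma$, and $\gamma$ itself can always be added as a constraint harmlessly because $\gamma$ contains all its own proper normalized blocks (so avoiding $\gamma$ together with some of its blocks is the same as avoiding those blocks; more simply, one may always list $\gamma$ among the constraints since each term of the union is a subset of $\pi[\C_1,\dots,\C_n]\langle\gamma\rangle$, as just shown).

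Finally, for the linear cases, one replaces $\pi$ by $12$ (resp.\ $21$), notes that an embedding of $\gamma$ in $12$ or $21$ is just a splitting of $\gamma$ into a (possibly empty) bottom-left block and a (possibly empty) top-right block that is itself an interval decomposition — equivalently $\gamma = \oplus[\gamma^{(1)},\gamma^{(2)}]$ or $\gamma = \ominus[\gamma^{(1)},\gamma^{(2)}]$ with one part allowed to be empty — and runs the identical argument on $\oplus[\C_1^+,\C_2]$ (resp.\ $\ominus[\C_1^-,\C_2]$). The $\oplus$-indecomposability (resp.\ $\ominus$-indecomposability) constraint on the first argument is simply carried along: since for any set $\cA$ one has $(\cA\langle E\rangle)^+ = \cA^+\langle E\rangle$ (noted just before Proposition~\ref{prop:system}), adding the pattern-avoidance constraints commutes with restricting to the indecomposable permutations, and the decomposition $\tau = \oplus[\beta_1,\beta_2]$ with $\beta_1$ $\oplus$-indecomposable is unique, so no occurrence of $\gamma$ is missed or double-counted.

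I expect the main obstacle to be the careful verification of the reformulation lemma, specifically the ``only if'' direction: given an arbitrary occurrence $I$ of $\gamma$ in $\tau = \pi[\beta_1,\dots,\beta_n]$, one must argue that grouping $I$ by blocks of $\tau$ yields an interval decomposition of $\gamma$ whose induced skeleton, after deleting empty parts, is exactly $\pi$ — this is where simplicity of $\pi$ is essential and where the combinatorial bookkeeping (matching the order and relative values of the groups of $I$ against the order and relative values of the corresponding $\pi_i$) needs to be done cleanly. Everything downstream of that lemma is a routine unwinding of the definitions of $K^\pi_\gamma$ and $E_{m,k_1\dots k_\ell}$.
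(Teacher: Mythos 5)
Your proof is correct and rests on the same core idea as the paper's: characterize containment of $\gamma$ in $\pi[\beta_1,\dots,\beta_n]$ via embeddings, observe that avoidance of $\gamma$ means every embedding has a ``failing'' component, and record one failing component per embedding as a tuple of $K^\pi_\gamma$ (the exclusion of $\varepsilon$ and $1$ being automatic, as you note, since both are patterns of every nonempty permutation). The one genuine difference of route is that the paper first rewrites $\pi[\C_1,\dots,\C_n]\langle\gamma\rangle$ as a union over tuples of \emph{intersections} $\bigcap_{i}\pi[\C_1,\dots,\C_{k_i}\langle\gamma_{\alpha_i(k_i)}\rangle,\dots,\C_n]$ and then invokes uniqueness of the substitution decomposition --- this is the only place where simplicity of $\pi$ (resp.\ the $\oplus$-/$\ominus$-indecomposability of the first component) is actually used --- to merge each intersection component-wise into a single term; your double inclusion relative to a fixed block decomposition bypasses that step entirely and is slightly more elementary. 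Relatedly, your parenthetical attributing the interval property of the groups $I\cap S^j$ to the simplicity of $\pi$ is misplaced: that property follows from each $S^j$ being a block of $\tau$ and holds for arbitrary $\pi$, so in your organization simplicity plays essentially no role. Finally, where the paper proves directly that $\gamma\in E_{m,k_1\dots k_\ell}$ for every tuple of $K^\pi_\gamma$ (the trivial embedding sending all of $\gamma$ into $\pi_m$ forces $k_{j_m}=m$), you only argue that $\gamma$ may be harmlessly adjoined to each constraint set; both arguments justify the first paragraph of the statement, but the paper's observation is the stronger one and matches the literal definition of $E_{m,k_1\dots k_\ell}$ in the displayed equation.
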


Proposition~\ref{prop:restrictions=blocs} and its proof heavily borrow from the proof of Lemma 18 of~\cite{AA05}. 
There are however several differences.  

In the first part of our statement, while
we stop at the condition $\D_i = \C_i \langle \gamma, \dots \rangle$ (which appears in the proof of Lemma 18 in~\cite{AA05}), 
\cite{AA05} rephrases it as: $\D_i$ is $\C_i \langle \gamma \rangle$ or a strong subclass of this class 
(that is, a proper subclass of $\C_i$ which has the property that every basis element of $\D_i$ is involved in some basis element of $\C_i$). 
Note that this rephrasing is not always correct (but this does not affect the correctness of the result of~\cite{AA05} which uses their Lemma 18).
A counter example is obtained taking  $\pi =2413$, $\C_i = Av(12)$ for all $i$ and $\gamma=2143$. 
Then, because of the embedding of $\gamma$ in $\pi$ that maps $21$ in $\pi_1$ and $43$ in $\pi_2$,  
we get a term with $\D_1=\C_1 \langle 2143, 21 \rangle = Av(12, 21)$, which is not a strong subclass of~$\C_1 \langle \gamma \rangle = Av(12)$.

The second part of Proposition~\ref{prop:restrictions=blocs} is not present in~\cite{AA05}. 
That is, Proposition~\ref{prop:restrictions=blocs} goes further than the proof of Lemma 18 of~\cite{AA05}:
we provide a statement which is nonetheless constructive as the proof of~\cite{AA05}, 
but also explicit, so that it can be directly used for algorithmic purpose.

\begin{proof}
We first consider the case of a simple root $\pi$. 
Let $\gamma$ be  a permutation  and  $\{\alpha_1, \dots, \alpha_\ell\}$ be the set of embeddings of $\gamma$ in $\pi$, 
each $\alpha_i$ being associated to the generalized substitution $\gamma = \pi\{\gamma_{\alpha_i(1)},\dots, \gamma_{\alpha_i(n)}\}$. 

Let $\sigma = \pi[\sigma^{(1)}, \dots, \sigma^{(n)}]$ where each $\sigma^{(k)}\in\C_k$.
Then 
$\sigma$ avoids $\gamma$ \ssi for every embedding~$\alpha_i$ (with $1\leq i \leq \ell$), 
there exists $k_i \in [1..n]$ such that $\gamma_{\alpha_i(k_i)}$ is not a pattern of $\sigma^{(k_i)}$, 
i.e. such that $\sigma^{(k_i)}$ avoids $\gamma_{\alpha_i(k_i)}$. 
Equivalently, $\sigma$ avoids $\gamma$ \ssi there exists a tuple $(k_1, \dots, k_\ell) \in [1..n]^\ell$ such that 
for every embedding~$\alpha_i$ (with $1\leq i \leq \ell$), $\sigma^{(k_i)}$ avoids $\gamma_{\alpha_i(k_i)}$. 
Thus,
\[ \pi[\C_1, \dots ,\C_n] \langle \gamma \rangle =
\bigcup_{(k_1, \dots, k_\ell) \in [1..n]^\ell}\ \bigcap_{i=1}^\ell \pi[\C_1, \dots ,\C_{k_i} \langle \gamma_{\alpha_i(k_i)} \rangle, \dots ,\C_n].
\]       
But, for any set $\calA$ of permutations we have $\calA\langle \varepsilon \rangle = \emptyset$.
So if for some $i \in [1..\ell]$, $\gamma_{\alpha_i(k_i)}=\varepsilon$, 
then $\bigcap_{i=1}^\ell \pi[\C_1, \dots ,\C_{k_i} \langle \gamma_{\alpha_i(k_i)} \rangle, \dots ,\C_n]=\emptyset$.
The same goes for the trivial permutation $1$ since every permutation contains $1$.\\
Therefore, we have:
\[ \pi[\C_1, \dots ,\C_n] \langle \gamma \rangle =
\bigcup_{(k_1, \dots, k_\ell) \in K^\pi_\gamma}\ \bigcap_{i=1}^\ell \pi[\C_1, \dots ,\C_{k_i} \langle \gamma_{\alpha_i(k_i)} \rangle, \dots ,\C_n],
\]
with $K^\pi_\gamma =\{(k_1, \dots, k_\ell) \in [1..n]^\ell\ |\ \forall i,\ \gamma_{\alpha_i(k_i)} \neq \varepsilon \text{ and } \gamma_{\alpha_i(k_i)} \neq 1\}$.
Following Example~\ref{ex:block-e}, and denoting $\alpha_1$ to $\alpha_{12}$ the embeddings of Table~\ref{table:embeddings} from top to bottom, 
we have for example $\mathbf{k} = (1,2,3,4,1,4,4,2,1,4,4,1) \in K^\pi_\gamma$.

Moreover, as $\pi$ is simple, by uniqueness of the substitution decomposition we have for any sets of permutations $E_1, \dots, E_n,$ $ F_1, \dots, F_n$:
\[
\pi[\C_1 \langle E_1 \rangle, \dots ,\C_n \langle E_n \rangle] \cap
\pi[\C_1 \langle F_1 \rangle, \dots ,\C_n \langle F_n \rangle] =
\pi[\C_1 \langle E_1 \cup F_1 \rangle, \dots ,\C_n \langle E_n \cup F_n \rangle].
\]
Thus,
 \[       \pi[\C_1, \dots ,\C_n] \langle \gamma \rangle =
        \bigcup_{(k_1, \dots, k_\ell) \in  K^\pi_\gamma} \pi[\C_1 \langle E_{1,k_1 \dots k_\ell} \rangle, \dots ,\C_n \langle E_{n,k_1 \dots k_\ell} \rangle].
\]
where $E_{m,k_1 \dots k_\ell }= \{ \gamma_{\alpha_i(k_i)}\ |\ i \in [1..\ell] \text{ and } k_i=m \}$.
Following the same example again, $E_{1,\mathbf{k}} = \{\gamma_{\alpha_1(1)},\gamma_{\alpha_5(1)}, \gamma_{\alpha_9(1)}, \gamma_{\alpha_{12}(1)}\} = \{546312,213,3241\}$, 
$E_{2,\mathbf{k}} = \{546312,12\}$, 
$E_{3,\mathbf{k}} = \{546312\}$ and
$E_{4,\mathbf{k}} = \{546312,312,12\}$.

Proposition~\ref{prop:restrictions=blocs} follows as soon as we ensure that each set $E_{m,k_1 \dots k_\ell}$
contains $\gamma$ and possibly other restrictions that are patterns of $\gamma$ corresponding to normalized blocks of $\gamma$. 
For a given $m \in [1..n]$, there always exists an embedding of $\gamma$ in $\pi$ that maps the whole permutation $\gamma$ to $\pi_m$. 
Denoting this embedding $\alpha_{j_m}$, we have $\gamma_{\alpha_{j_m}(m)}=\gamma$ and $\gamma_{\alpha_{j_m}(q)}=\varepsilon$ for $q \neq m$. 
Consequently, if $k_{j_m} \neq m$ then $\gamma_{\alpha_{j_m}(k_{j_m})}=\varepsilon$ and $(k_1, \dots, k_\ell) \notin K^\pi_\gamma$. 
Therefore when $(k_1, \dots, k_\ell) \in K^\pi_\gamma$ then $k_{j_m} = m$ and the set $E_{m,k_1 \dots k_\ell }$ contains at least $\gamma_{\alpha_{j_m}(k_{j_m})} = \gamma$. 
Moreover, by definition its other elements are patterns of $\gamma$ corresponding to normalized blocks of $\gamma$.

Notice now that the proof immediately extends to the case of roots $\oplus$ and $\ominus$. 
Indeed, in the above proof, we need $\pi$ to be simple 
only because we use the uniqueness of the substitution decomposition.
In the case $\pi =\oplus$ (resp.~$\ominus$), the uniqueness is ensured by taking the set $\C^+_1$ of $\oplus$-indecomposable permutations of $\C_1$
(resp.\ the set $\C^-_1$ of $\ominus$-indecomposable permutations).
\end{proof}

\begin{example}
For $\pi = \ominus$ and $\gamma = 3412$, there are three embeddings of $\gamma$ in $\pi$: 
$\alpha_1$ which follows from the generalized substitution $3412 = \ominus\{3412,\varepsilon\}$, 
$\alpha_2$ which follows from $3412 = \ominus\{\varepsilon,3412\}$, and 
$\alpha_3$ which follows from $3412 = \ominus \{12,12\}$. 
So  $K^\pi_\gamma =\{(1,2,1),(1,2,2)\}$ and the application of Equation~\eqref{eq:propagate} gives
\[
\ominus[\C_1^-,\C_2]\langle 3412 \rangle = \ominus[\C_1^-\langle 3412,12 \rangle,\C_2 \langle 3412 \rangle] \cup 
\ominus [\C_1^-\langle 3412 \rangle,\C_2 \langle 3412,12 \rangle],
\]
which simplifies to 
$
\ominus[\C_1^-,\C_2]\langle 3412 \rangle = \ominus[\C_1^-\langle 12 \rangle,\C_2 \langle 3412 \rangle] \cup 
\ominus [\C_1^-\langle 3412 \rangle,\C_2 \langle 12 \rangle]
$.
\end{example}

By induction on the size of~$P$, 
Proposition~\ref{prop:restrictions=blocs} extends to the case of a set $P$ of excluded patterns, instead of a single permutation $\gamma$:

\begin{prop}\label{prop:strongsubclass}
For any simple permutation $\pi$ of size $n$ and for any set of permutations $P$, 
the set $\pi[\C_1, \dots, \C_n] \langle P \rangle$ rewrites as a union of sets~$\pi[\D_1, \dots, \D_n]$ where for all $i$,
$\D_i = \C_i \langle P \cup P_i \rangle$ with $P_i$ containing only permutations corresponding to normalized blocks of elements of~$P$. 

Similarly, $\oplus[\C^+_1, \C_2] \langle P \rangle$ (resp.\ $\ominus[\C^-_1, \C_2] \langle P \rangle$) rewrites as a union of sets 
$\oplus[\D_1^+, \D_2]$ (resp.\ $\ominus[\D_1^-, \D_2]$) 
where for $i=1$ or $2$, $\D_i = \C_i \langle P \cup P_i \rangle$ 
with $P_i$ containing only permutations corresponding to normalized blocks of elements of $P$.
\end{prop}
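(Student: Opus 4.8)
The plan is to prove Proposition~\ref{prop:strongsubclass} by induction on the cardinality of the set $P$ of excluded patterns, using Proposition~\ref{prop:restrictions=blocs} as the base step and for the inductive step. The key conceptual point is that propagating a set of constraints into the subtrees can be done one constraint at a time, and that the class of ``admissible'' right-hand side terms — unions of sets $\pi[\D_1, \dots, \D_n]$ with $\D_i = \C_i \langle P' \rangle$ where $P'$ consists of $P$ together with normalized blocks of elements of $P$ — is stable under a further application of Proposition~\ref{prop:restrictions=blocs}.

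\begin{proof}
We argue by induction on $|P|$. If $P = \emptyset$, then $\pi[\C_1, \dots, \C_n]\langle P \rangle = \pi[\C_1, \dots, \C_n]$ and there is nothing to prove (the union has a single term, with $P_i = \emptyset$ for all $i$). If $|P|=1$, say $P = \{\gamma\}$, this is exactly Proposition~\ref{prop:restrictions=blocs}.

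Now assume $|P| \geq 2$, write $P = P' \uplus \{\gamma\}$ with $|P'| = |P|-1$, and note that avoiding every pattern of $P$ is the same as avoiding every pattern of $P'$ and avoiding $\gamma$, so that $\pi[\C_1, \dots, \C_n]\langle P \rangle = \big(\pi[\C_1, \dots, \C_n]\langle P' \rangle\big)\langle \gamma \rangle$. By the induction hypothesis applied to $P'$, we can write $\pi[\C_1, \dots, \C_n]\langle P' \rangle$ as a finite union $\bigcup_j \pi[\D_1^{(j)}, \dots, \D_n^{(j)}]$ where, for each $j$ and each $i$, $\D_i^{(j)} = \C_i \langle P' \cup P_i^{(j)} \rangle$ with $P_i^{(j)}$ consisting only of permutations corresponding to normalized blocks of elements of $P'$. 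Intersecting with $\langle \gamma \rangle$ distributes over the union, so it remains to treat each term $\pi[\D_1^{(j)}, \dots, \D_n^{(j)}]\langle \gamma \rangle$ separately. For a fixed $j$, Proposition~\ref{prop:restrictions=blocs} (applied with the sets $\C_i$ replaced by $\D_i^{(j)}$, which is legitimate since that proposition holds for arbitrary sets of permutations) rewrites this term as a union of sets $\pi[\D_1, \dots, \D_n]$ where $\D_i = \D_i^{(j)} \langle \gamma, \dots \rangle$, the extra restrictions after $\gamma$ being patterns of $\gamma$ corresponding to normalized blocks of $\gamma$. Unfolding the definition of $\D_i^{(j)}$, each such $\D_i$ is of the form $\C_i \langle P' \cup P_i^{(j)} \cup \{\gamma\} \cup Q_i \rangle = \C_i \langle P \cup (P_i^{(j)} \cup Q_i) \rangle$, where $Q_i$ is a set of normalized blocks of $\gamma$. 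Since $P_i^{(j)}$ consists of normalized blocks of elements of $P'$ and $Q_i$ consists of normalized blocks of $\gamma$, the set $P_i := P_i^{(j)} \cup Q_i$ consists only of permutations corresponding to normalized blocks of elements of $P$. Taking the union over all $j$ and over all terms produced by Proposition~\ref{prop:restrictions=blocs} yields the desired rewriting of $\pi[\C_1, \dots, \C_n]\langle P \rangle$, and the finiteness of the union is clear at each step.

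The case of linear roots is entirely analogous: one replaces $\pi$ by $\oplus$ (resp.\ $\ominus$) and $\C_1$ by $\C_1^+$ (resp.\ $\C_1^-$) throughout, and uses the corresponding part of Proposition~\ref{prop:restrictions=blocs}. The only point to check is that $(\C_1^+)\langle E \rangle = (\C_1 \langle E \rangle)^+$, which was observed just before Proposition~\ref{prop:system}, so that the $\oplus$-indecomposability constraint is preserved under propagation; the bookkeeping on the sets $P_i$ is identical.
\end{proof}

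The induction is almost mechanical once Proposition~\ref{prop:restrictions=blocs} is available; the only thing to be careful about is the observation that Proposition~\ref{prop:restrictions=blocs} is stated for \emph{arbitrary} sets of permutations $\C_1, \dots, \C_n$ (not just the classes $\wc^{\delta}$), which is what allows re-applying it to the sets $\D_i^{(j)}$ produced at the previous stage. I expect the main (minor) obstacle to be purely notational: keeping the accumulating sets of blocks $P_i$ under control and phrasing precisely that ``a normalized block of a normalized block of $\gamma$ is again a normalized block of $\gamma$'' so that the membership condition on $P_i$ is genuinely preserved through the induction.
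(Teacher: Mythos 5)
Your proof is correct and follows exactly the route the paper takes: the paper itself derives Proposition~\ref{prop:strongsubclass} from Proposition~\ref{prop:restrictions=blocs} ``by induction on the size of $P$'', adding the excluded patterns one at a time and noting that the accumulated restrictions remain normalized blocks of elements of $P$. Your version merely spells out the bookkeeping (which is sound; note that the sets $Q_i$ are directly normalized blocks of $\gamma$, so the ``block of a block'' issue you anticipate never actually arises).
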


Now we have all the results we need to describe an algorithm computing a (possibly ambiguous) combinatorial system describing $\C$.

\subsection{An algorithm computing a combinatorial system describing $\C$}
\label{ssec:proofThAmbiguous}

We describe the algorithm \textsc{AmbiguousSystem} (Algo.~\ref{alg:sys-ambigu} p.\pageref{alg:sys-ambigu})
that takes as input the basis $B$ of a class $\C$ and the set $\SC$ of simple permutations in $\C$ (both finite),
and that produces in output a (possibly ambiguous) system of combinatorial equations describing the
permutations of $\C$ through their decomposition trees. 
Recall (from p.\pageref{page:Bstar}) that $\Bstar$ denotes the subset of non-simple permutations of $B$. 

The main step is performed by~\textsc{EqnForClass} (Algo.~\ref{alg:comp-eqn} below),
taking as input $\s_\C$, $\delta \in \{~ , +, -\}$ and a set $E$ of patterns (initially $\Bstar$),
and producing an equation describing $\wc^{\delta}\langle E\rangle$.
The algorithm takes an equation of the form~\eqref{eqn:Wc1},~\eqref{eqn:Wc2} or~\eqref{eqn:Wc3} (p.\pageref{eqn:Wc1}) 
describing $\wc^{\delta}$ and  adds one by one the constraints of $E$ using Equation~\eqref{eq:propagate}
as described in procedure \textsc{AddConstraints}.

\SetKwBlock{PGfunc}{\textsc{AddConstraints}}{end}
\begin{algorithm}[htbp]
\DontPrintSemicolon 
\KwData{$E$ is a set of permutations, $\wc^{\delta}$ is given by $\s_\C$ and $\delta \in \{~ , +, -\}$ }
\SetKw{In}{in} 
\KwResult{An equation defining $\wc^{\delta}\langle E\rangle$ as a union of $\pi[\C_1, \dots , \C_n]$ }
\Begin{
	$\mathcal{E} \leftarrow$ Equation~\eqref{eqn:Wc1} or \eqref{eqn:Wc2} or \eqref{eqn:Wc3} p.\pageref{eqn:Wc1} (depending on $\delta$), replacing the left part $\wc^{\delta}$ by $\wc^{\delta}\langle E\rangle$\;
	\ForEach{constraint $\gamma$ \In $E$}{
		\ForEach{$t=\pi[\C_1, \dots , \C_n]$ that appears \In $\mathcal{E}$}{
			$t\leftarrow$ \textsc{AddConstraints}$(\pi[\C_1, \dots , \C_n], \gamma )$ 
			\tcc*{this step modifies $\mathcal{E}$}
 		}
	}
	\Return $\mathcal{E}$\;
}
\bigskip
\tcc{Returns a rewriting of $\pi[\C_1 \dots \C_n] \langle \gamma \rangle$ as a union $\bigcup \pi[\D_1, \dots \D_n]$ }
\SetKwSty{textsc} 
\PGfunc(\params{$\pi[\C_1 \ldots \C_n] , \gamma$}){ \SetKwSty{textbf} 
	compute all the embeddings of $\gamma$ in $\pi$ with \textsc{AllEmbeddings}
        \tcc*{See Algo.\,\ref{alg:all_embeddings} (p.\pageref{alg:all_embeddings})}
	compute $K^\pi_\gamma$ and sets $E_{m,k_1 \dots k_\ell }$ defined in Equation~\eqref{eq:propagate} \; 
	\Return $\bigcup_{(k_1, \dots, k_\ell) \in K^\pi_\gamma} \pi[\C_1 \langle E_{1,k_1 \dots k_\ell} \rangle, \dots ,\C_n \langle E_{n,k_1 \dots k_\ell} \rangle]$\;
}
\caption{\textsc{EqnForClass}($\wc^\delta,E$)}\label{alg:comp-eqn}
\end{algorithm}

The procedure~\textsc{AmbiguousSystem}
keeps adding new equations to the system 
which consists originally of the equation describing $\wc\langle \Bstar \rangle$, that is \C.
This algorithm repeatedly calls  \textsc{EqnForClass}
until every $\wc^{\delta}\langle E_i \rangle$ appearing in the system is defined by an equation. 
All the sets $E$ are sets of normalized blocks (therefore of patterns) of permutations in $\Bstar$. 
Since $B$ is finite, there is only a finite number of patterns of elements of $\Bstar$, hence a finite number of possible $E$, and~\textsc{AmbiguousSystem} terminates. 
As for its complexity, it depends on the number of equations in the output system for which we give bounds in Section~\ref{ssec:bounds}.

The correctness of the algorithm is a consequence of
Propositions~\ref{prop:system}, \ref{prop:restrictions=blocs} and~\ref{prop:strongsubclass}, 
which therefore completes the proof of Theorem~\ref{thm:systemeAmbigu}.

\begin{example}
\label{ex:algo1}
Consider the class $\C =Av(B)$ for $B=\{1243,2413,531642,41352\}$: $\C$ contains only one simple permutation (namely $3142$), 
and $\Bstar = \{1243\}$. Applying the procedure~\textsc{AmbiguousSystem}  to this class $\C$ gives the following system of equations:
\begin{small}
\begin{eqnarray}
\wc\langle1 2 4 3 \rangle &=& 1 \ \cup \ \oplus[\wc^{+}\langle 1 2 \rangle , \wc\langle1 3 2 \rangle] \ \cup \ \oplus[\wc^{+}\langle 1 2 4 3 \rangle , \wc\langle2 1 \rangle] \ \cup \ \ominus[\wc^{-}\langle 1 2 4 3 \rangle , \wc\langle1 2 4 3 \rangle] \nonumber\\
&\ \cup \ & 3 1 4 2[\wc\langle1 2 4 3 \rangle , \wc\langle1 2 \rangle , \wc\langle2 1 \rangle , \wc\langle1 3 2 \rangle] \ \cup \ 3 1 4 2[\wc\langle1 2 \rangle , \wc\langle1 2 \rangle , \wc\langle1 3 2 \rangle , \wc\langle1 3 2 \rangle] \label{eqn:ambigu1}\\
\wc^{+}\langle1 2 \rangle &=& 1 \ \cup \ \ominus[\wc^{-}\langle 1 2 \rangle , \wc\langle1 2 \rangle]\label{eqn:ambigu2}\\
\wc\langle1 3 2 \rangle &=& 1 \ \cup \ \oplus[\wc^{+}\langle 1 3 2 \rangle , \wc\langle2 1 \rangle] \ \cup \ \ominus[\wc^{-}\langle 1 3 2 \rangle , \wc\langle1 3 2 \rangle]\label{eqn:ambigu3}\\
\wc\langle2 1 \rangle &=& 1 \ \cup \ \oplus [\wc^{+}\langle 2 1 \rangle , \wc\langle2 1 \rangle].\label{eqn:ambigu4}\\
\wc^{+}\langle 1 2 4 3 \rangle &=& \dots .\nonumber
\end{eqnarray}
\end{small}
\end{example}

This is a {\em simplified} version of the actual output of~\textsc{AmbiguousSystem}. \label{page:mention_des_simplifications}
For instance, with a literal application of the algorithm, instead of Equation~\eqref{eqn:ambigu1} we would get:
\begin{scriptsize}
\begin{align}
 &\wc\langle1 2 4 3 \rangle = 1 \ \cup \ \oplus[\wc^{+}\langle 1 2 4 3, 1 2 \rangle , \wc\langle 1 2 4 3, 1 3 2 \rangle] \ \cup \ \oplus[\wc^{+}\langle 1 2 4 3 \rangle , \wc\langle 1 2 4 3, 1 3 2, 2 1 \rangle] \ \cup \ \ominus[\wc^{-}\langle 1 2 4 3 \rangle , \wc\langle1 2 4 3 \rangle] \nonumber\\
 &\cup  \ 3 1 4 2[\wc \langle 1 2 4 3, 1 2 \rangle , 
				\wc \langle 1 2 4 3, 1 2 \rangle , 
				\wc \langle 1 2 4 3, 1 3 2 \rangle , 
				\wc \langle 1 2 4 3, 1 3 2 \rangle] 
\ \cup\ 3 1 4 2[\wc \langle 1 2 4 3, 1 2 \rangle , 
				\wc \langle 1 2 4 3, 1 2 \rangle , 
				\wc \langle 1 2 4 3, 1 3 2 \rangle , 
				\wc \langle 1 2 4 3, 1 3 2, 2 1 \rangle] \nonumber\\
& \cup\ 3 1 4 2[\wc \langle 1 2 4 3, 1 2 \rangle , 
				\wc \langle 1 2 4 3, 1 2 \rangle , 
				\wc \langle 1 2 4 3, 1 3 2, 2 1 \rangle , 
				\wc \langle 1 2 4 3, 1 3 2 \rangle] 
\ \cup\ 3 1 4 2[\wc \langle 1 2 4 3, 1 2 \rangle , 
				\wc \langle 1 2 4 3, 1 2 \rangle , 
				\wc \langle 1 2 4 3, 1 3 2, 2 1 \rangle , 
				\wc \langle 1 2 4 3, 1 3 2, 2 1 \rangle]\nonumber \\
& \cup\ 3 1 4 2[\wc \langle 1 2 4 3 \rangle , 
				\wc \langle 1 2 4 3, 1 2 \rangle , 
				\wc \langle 1 2 4 3, 1 3 2, 2 1 \rangle , 
				\wc \langle 1 2 4 3, 1 3 2 \rangle] 
\ \cup\ 3 1 4 2[\wc \langle 1 2 4 3 \rangle , 
				\wc \langle 1 2 4 3, 1 2 \rangle , 
				\wc \langle 1 2 4 3, 1 3 2, 2 1 \rangle , 
				\wc \langle 1 2 4 3, 1 3 2, 2 1 \rangle]
\nonumber
\end{align}
\end{scriptsize}
\noindent Nevertheless, this union can be simplified. 
The simplification process will be described more thoroughly in Section~\ref{ssec:simplifications}. 
We illustrate it by two examples. 
First, since a permutation that avoids $1 2$ or $132$ will necessarily avoid $1243$, 
the term $\oplus[\wc^{+}\langle 1 2 4 3, 1 2 \rangle , \wc\langle 1 2 4 3, 1 3 2 \rangle]$ rewrites as
$\oplus[\wc^{+}\langle 1 2 \rangle , \wc\langle1 3 2 \rangle]$ 
(see Proposition~\ref{prop:simplification_restriction} p.~\pageref{prop:simplification_restriction}). 
We can also remove some terms of the union, 
such as $$3 1 4 2[\wc \langle 1 2 4 3, 1 2 \rangle , \wc \langle 1 2 4 3, 1 2 \rangle , \wc \langle 1 2 4 3, 1 3 2 \rangle , \wc \langle 1 2 4 3, 1 3 2, 2 1 \rangle]$$ 
which is included in $3 1 4 2[\wc\langle1 2 \rangle , \wc\langle1 2 \rangle , \wc\langle1 3 2 \rangle , \wc\langle1 3 2 \rangle]$ 
(see Proposition~\ref{prop:simplification_union} p.~\pageref{prop:simplification_union}). 
Such simplifications can be performed on the fly, each time a new equation is computed. 

\medskip

We observe on Example~\ref{ex:algo1} that the system produced by~\textsc{AmbiguousSystem} (Algo.~\ref{alg:sys-ambigu}) is ambiguous in general. 
In this case, Equation~\eqref{eqn:ambigu1} gives an
ambiguous description of the class $\wc\langle1 2 4 3 \rangle$: the two terms with root $\oplus$ have non-empty
intersection, and similarly for root $3142$. 
Following the route of~\cite{AA05}, we could use inclusion-exclusion on this system. 
On Equation~\eqref{eqn:ambigu1} of Example~\ref{ex:algo1}, this would give: 
{\small
\begin{align*}
    \wc\langle1 2 4 3 \rangle = \ 1 \ & \cup \ \oplus [\wc^{+}\langle 1 2 \rangle , \wc\langle1 3 2 \rangle] \ \cup \ \oplus[\wc^{+}\langle 1 2 4 3 \rangle ,
\wc\langle2 1 \rangle] \ \setminus \ \oplus [\wc^{+}\langle 1 2 \rangle , \wc\langle2 1 \rangle]  \\
 & \cup \ \ominus [\wc^{-}\langle 1 2 4 3 \rangle , \wc\langle1 2 4 3 \rangle] \ \cup 3 1 4 2 [\wc\langle1 2 \rangle , \wc\langle1 2 \rangle ,
    \wc\langle1 3 2 \rangle , \wc\langle1 3 2 \rangle] \\
 & \cup \ 3 1 4
    2 [\wc\langle1 2 4 3 \rangle , \wc\langle1 2 \rangle , \wc\langle2
    1 \rangle , \wc\langle1 3 2 \rangle] \ \setminus \ 3 1 4
    2[\wc\langle1 2 \rangle , \wc\langle1 2 \rangle , \wc\langle2 1
    \rangle , \wc\langle1 3 2 \rangle].
\end{align*}
    }
As explained earlier, this is not the route we follow. 
In the next section, we explain how to modify the algorithm \textsc{AmbiguousSystem}
to obtain a combinatorial specification by introducing pattern containment constraints.

\section{A non-ambiguous combinatorial system, \emph{i.e.}, a combinatorial specification}
\label{sec:disambiguation}

The goal of this section is to describe an algorithm computing a specification
for any permutation class having finitely many simple permutations
-- see algorithm~\textsc{Specification} (Algo.~\ref{alg:final}).
This algorithm proceeds as~\textsc{AmbiguousSystem} (Algo.~\ref{alg:sys-ambigu} p.\pageref{alg:sys-ambigu}),
but also transforms each equation produced into a  non-ambiguous one.
The disambiguation of equations is performed by~\textsc{Disambiguate} (Algo.~\ref{alg:disambiguise}). 
This algorithm replaces ambiguous unions appearing in an equation by disjoint unions using complement sets,
in the spirit of Figure~\ref{fig:union}. 

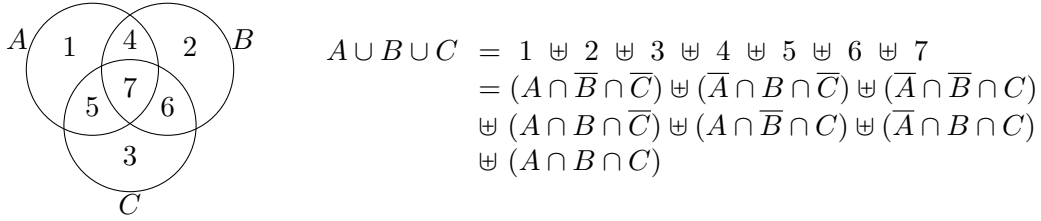
\begin{figure}[htbp]
\begin{tikzpicture}[scale=.5]
\draw (0,1) circle (50pt);
\draw (2,1) circle (50pt);
\draw (1,-0.5) circle (50pt);
\draw (-2,1.8) node {$A$};
\draw (4,1.8) node {$B$};
\draw (1,-2.6) node {$C$};
\draw (-0.6,1.6) node {$1$};
\draw (2.6,1.6) node {$2$};
\draw (1,-1.3) node {$3$};
\draw (1,1.8) node {$4$};
\draw (0,0) node {$5$};
\draw (2,0) node {$6$};
\draw (1,0.5) node {$7$};
\draw (10,1.5) node[left] {$A \cup B \cup C$};
\draw (10,1.5) node[right] {$ =\ 1\ \uplus\ 2\ \uplus\ 3\ \uplus\ 4\ \uplus\ 5\ \uplus\ 6\ \uplus\ 7$};
\draw (10,0.5) node[right] {$ =  (A \cap \overline{B} \cap \overline{C}) \uplus ( \overline{A} \cap B \cap \overline{C}) \uplus (\overline{A} \cap \overline{B} \cap C) $};
\draw (10,-0.5) node[right] {$ \uplus\ (A \cap B \cap \overline{C}) \uplus ( A \cap \overline{B} \cap C) \uplus (\overline{A} \cap B \cap C) $};
\draw (10,-1.5) node[right] {$ \uplus\ (A \cap B \cap C)$};
\end{tikzpicture}
\caption{Rewriting unions as disjoint unions.}\label{fig:union}
\end{figure}

This may result in new terms appearing on the right side of the modified equation. 
Indeed, the terms of the system obtained from~\textsc{AmbiguousSystem} involve pattern avoidance constraints (which were denoted $\langle E \rangle$). 
Consequently, taking complements, we are left with new pattern \emph{containment} constraints as well (which we will denote $(A)$). 
These new terms need to be defined by an equation, to be added to the system. 
This is solved by the procedure~\textsc{EqnForRestriction} (Algo.~\ref{alg:ComputeEquationForRestriction}), 
whose working principle is similar to that of~\textsc{EqnForClass}.

Finally,~\textsc{EqnForRestriction} and~\textsc{Disambiguate} combine into~\textsc{Specification} (Algo.~\ref{alg:final}), 
which is our main algorithm.

\begin{algorithm}[htbp]
\DontPrintSemicolon 
\KwData{A finite basis of patterns defining ${\mathcal C}=Av(B)$ and the finite set $\mathcal{S}_\C$ of simple permutations in $\C$.}  
\KwResult{A combinatorial specification defining $\C$.}
\Begin{
	$\Sys \leftarrow $ \textsc{EqnForRestriction}($\wc$,$\Bstar$,$\emptyset$) \tcc*{See Algo.\,\ref{alg:ComputeEquationForRestriction}
\While{ there is an equation $F$ in $\Sys$ that has not been processed}{
	$F \leftarrow $ \textsc{Disambiguate}($F$) \tcc*{See Algo.\,\ref{alg:disambiguise}}
	\While{ there exists a right-only \rs $\wc^{\delta}\langle E\rangle(A)$ in equation $F$}{
		$\Sys \leftarrow \Sys \bigcup$ \textsc{EqnForRestriction}($\wc^{\delta}$,$E$,$A$). 
	}
}
\Return $\Sys$\;}
}
\caption{\textsc{Specification}($B,\mathcal{S}_\C$)} \label{alg:final}
\end{algorithm}

\subsection{Presentation of the method}

We start by introducing notation to deal with the pattern containment constraints. 

\begin{defi}
\label{dfn:restriction}
For any set $\cP$ of permutations, we define $\mathcal{P}\langle E \rangle(A)$  as the set of
permutations of $\mathcal{P}$  that avoid every pattern
of $E$ and contain every pattern of $A$: 
$$\cP\langle E \rangle(A)=\{\sigma \in \cP \mid \forall \pi \in E, \pi \not \preccurlyeq \sigma \text{ and } \forall \pi \in A, \pi \preccurlyeq \sigma\}.$$
When $\cP=\wc^{\delta}$ for $\delta \in\{~, +, -\}$ and ${\mathcal C}$ a permutation class, such a set is called a \emph{restriction}.

We also denote $\cP (E)= \cP\langle \emptyset \rangle (E)$.
\end{defi}

Restrictions are a generalization of permutations classes.
For $A=\emptyset$, ${\mathcal C}\langle E \rangle (\emptyset)$
is the standard permutation class ${\mathcal C}\langle E \rangle$.
Any permutation class can be written as $S\langle E \rangle$
with $S$ the set of all permutations and $E$ a set of patterns that may be infinite.
Likewise, any restriction can be written as $S\langle E \rangle (A)$ with $E$ a set of patterns that may be infinite,
but we can always choose a finite set for $A$: if $A$ is infinite,
then $S\langle E \rangle (A) = \emptyset$.

\begin{rem}\label{rem:conventions}
Our convention that no permutation class contains the empty permutation $\varepsilon$ 
implies that $\varepsilon \notin \wc^{\delta} \langle E \rangle(A)$, for any restriction $\wc^{\delta} \langle E \rangle(A)$. 
We can also make the assumption that $\varepsilon \notin E$ and $\varepsilon \notin A$, 
since $\wc^{\delta}\langle \{\varepsilon\} \cup E \rangle (A)=\emptyset$ and
$\wc^{\delta}\langle E \rangle(\{\varepsilon\} \cup A)=\wc^{\delta}\langle E \rangle (A)$ for any $E$ and $A$. 
Moreover we assume that $A\cap E$ is empty and that $1 \notin E$, 
otherwise $\wc^{\delta}\langle E \rangle(A)=\emptyset$. 
We finally assume that $1 \notin A$ since 
$\wc^{\delta}\langle E \rangle(A)=\wc^{\delta}\langle E \rangle(A \setminus \{1\})$.
\end{rem}

Restrictions are stable by intersection as
$\cP\langle E \rangle(A)\cap \cP\langle E' \rangle(A')=\cP\langle E
\cup E'\rangle(A\cup A')$.

\begin{defi}
A {\em \rt} is a set of permutations
$\pi[\D_{1},\D_{2},\ldots,\D_{n}]$ 
for $\pi$ a simple permutation, or $\oplus[\D^+_{1},\D_{2}]$ or $\ominus[\D^-_{1},\D_{2}]$,
where each $\D_{i}$ is a restriction of the form $\wc\langle E \rangle(A)$.
\end{defi}
By uniqueness of the substitution decomposition of a permutation (Theorem~\ref{thm:decomp_perm_AA05} p.\pageref{thm:decomp_perm_AA05}), 
restriction terms are stable by intersection and the intersection is performed component-wise for terms sharing the same root: 
$\pi[\D_{1},\ldots,\D_{n}] \cap \pi[{\mathcal T}_{1}, \ldots,{\mathcal T}_{n}] = \pi[\D_{1}\cap {\mathcal T}_{1},\ldots,\D_{n}\cap {\mathcal T}_{n}]$.

\bigskip

Now we have all the notions we need to present the general structure of our main algorithm: \textsc{Specification} (Algo.~\ref{alg:final} above).
Adapting to restrictions the ideas developed for classes in Section~\ref{sec:ambiguous},
we obtain a non-ambiguous equation for any restriction $\wc^{\delta} \langle E \rangle (A)$ for $\delta \in \{~, +, -\}$
within four steps (where $1_{(A)} = 1$ if $A = \emptyset$ and $\emptyset$ otherwise):
\begin{itemize}
\item \textbf{Step~1}, equation from the substitution decomposition of permutations:\\
$\wc^{\delta} \langle E \rangle (A) = 1_{(A)}\ 
\uplus\ \dots\ \uplus \textstyle\biguplus_{\pi \in \s_\C} \pi[\wc, \dots, \wc]\langle E \rangle (A)$

 \item \textbf{Step~2}, propagation of the avoidance constraints:\\
$\wc^{\delta} \langle E \rangle (A) = 1_{(A)}\ 
\uplus\ \dots\ \uplus \textstyle\biguplus_{\pi \in \s_\C} \cup_{\ell \in L_{\pi}} \pi[\wc\langle F^\ell_1 \rangle, \dots, \wc\langle F^\ell_n \rangle] (A)$

 \item \textbf{Step~3}, propagation of the containment constraints:\\
$\wc^{\delta} \langle E \rangle (A) = 1_{(A)}\ 
\uplus\ \dots\ \uplus \textstyle\biguplus_{\pi \in \s_\C} \cup_{i \in I_{\pi}} \pi[\wc\langle E^i_1 \rangle (A^i_1), \dots, \wc\langle E^i_n \rangle (A^i_n)]$

 \item \textbf{Step~4}, disambiguation of the equation:\\
$\wc^{\delta} \langle E \rangle (A) = 1_{(A)}\ 
\uplus\ \dots\ \uplus \textstyle\biguplus_{\pi \in \s_\C} \uplus_{k \in K_{\pi}} \pi[\wc\langle E'^k_1 \rangle (A'^k_1), \dots, \wc\langle E'^k_n \rangle (A'^k_n)]$
\end{itemize}

The algorithm \textsc{Specification} starts by computing a non-ambiguous equation for $\C = \wc \langle B^\star \rangle (\emptyset)$,
calling the procedures \textsc{EqnForRestriction} (performing Steps~1 to~3 above -- see details in Algo.~\ref{alg:ComputeEquationForRestriction} p.\pageref{alg:ComputeEquationForRestriction})
and \textsc{Disambiguate} (performing Step~4 -- see details in Algo.~\ref{alg:disambiguise} p.\pageref{alg:disambiguise}).
The specification for $\C$ is completed by using the same method of four steps to obtain an equation for each $\wc\langle E'^k_j \rangle (A'^k_j)$
appearing on the right side of the produced equation, and again recursively to obtain an equation for each restriction that appears in the system.

Even if we use it only to compute specifications for permutation classes (writing $\C = \wc \langle B^\star \rangle (\emptyset)$),
this algorithm allows more generally to obtain a specification for any restriction $S^{\delta} \langle E \rangle (A)$
such that $S\langle E \rangle$ has a finite number of simple permutations 
(implying that $E$ can be chosen finite).
Indeed we only have to replace \textsc{EqnForRestriction}($\wc$,$B$,$\emptyset$) with
\textsc{EqnForRestriction}($\wc^{\delta}$,$E$,$A$), where $\C=S\langle E \rangle$.

We prove in the next three subsections that the result of our main algorithm \textsc{Specification} has the following properties:

\begin{theo}\label{thm:systemeNonAmbigu}
Let $\C$ be a permutation class given by its finite basis $B$ and whose set $\s_\C$ of simple permutations is finite and given. Denote by $\Bstar$ the set of non-simple permutations of $B$. 

The result of \textsc{Specification}$(B,\s_\C)$ (Algo.~\ref{alg:final}) is a finite combinatorial system of equations of the form $\D_0 = {\mathit{1}}_{[1 \in \D_0]} \uplus \biguplus \pi[\D_1, \dots, \D_n]$, 
where ${\mathit{1}}_{[1 \in \D_0]}$ is the class consisting in a unique permutation of size $1$ if the permutation $1$ belongs to $\D_0$ and is empty otherwise, 
and where each $\D_i$ is a \rs $\wc^{\delta}\langle E \rangle(A)$ with $\delta \in \{~, +, -\}$, and $E$ and $A$ containing only normalized blocks\footnote{
Recall from Definition~\ref{def:block} (p.\pageref{def:block}) that the normalized blocks of $\gamma$ are 
the permutations $\gamma_I$ obtained when restricting $\gamma$ to any of its intervals $I$.
} of elements of $\Bstar$. Moreover $\C$ appears as the left part of an equation, and every $\D_i$ that appears
in the system is the left part of one equation of the system.

In particular this provides a combinatorial specification of $\C$.
\end{theo}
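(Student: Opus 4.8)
The plan is to prove Theorem~\ref{thm:systemeNonAmbigu} by verifying four things: that \textsc{Specification} terminates, that every equation it produces has the claimed syntactic form, that every restriction appearing on a right-hand side is itself defined by an equation (completeness), and that the system it produces correctly and unambiguously describes $\C$. The correctness will follow from the correctness of the four-step process applied to each restriction $\wc^{\delta}\langle E\rangle(A)$, which in turn rests on results established (or to be established) for the constituent procedures \textsc{EqnForRestriction} and \textsc{Disambiguate}. So the bulk of the argument will be bookkeeping around these two building blocks, whose individual correctness I would isolate as separate lemmas in Sections~\ref{sec:disambiguation}.1--3 (analogously to how Propositions~\ref{prop:system}--\ref{prop:strongsubclass} supported Theorem~\ref{thm:systemeAmbigu}).

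First I would establish the syntactic form. Step~1 is just Proposition~\ref{prop:sys_wc} intersected with the constraints $\langle E\rangle(A)$ (using that restrictions are stable under intersection and that intersection with $\pi[\wc,\dots,\wc]$ is componentwise for the root $\pi$). Step~2 is the propagation of avoidance constraints, which is exactly Proposition~\ref{prop:strongsubclass} applied to $P = E$: it rewrites each $\pi[\wc,\dots,\wc]\langle E\rangle$ as a union of terms $\pi[\wc\langle F^{\ell}_1\rangle,\dots,\wc\langle F^{\ell}_n\rangle]$ where each $F^{\ell}_j$ consists of normalized blocks of elements of $E$, hence (by induction, since $E$ already consists of normalized blocks of elements of $\Bstar$, and a normalized block of a normalized block of $\gamma$ is again a normalized block of $\gamma$) of normalized blocks of $\Bstar$. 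Step~3 does the symmetric propagation for containment constraints $(A)$, and I would invoke the analogue of Proposition~\ref{prop:restrictions=blocs} for containment (to be stated and proved for containment just as it was for avoidance) to conclude that the $A^i_j$ again consist of normalized blocks of elements of $A$, hence of $\Bstar$. Step~4 (\textsc{Disambiguate}) only takes Boolean combinations, adding complement/containment constraints of the shape already present, so it preserves the ``normalized blocks of $\Bstar$'' invariant; crucially, taking a complement of an avoidance constraint $\langle\gamma\rangle$ turns it into a containment constraint $(\gamma)$ with $\gamma$ still a normalized block of $\Bstar$, so no new patterns are introduced. The term $1_{[1\in\D_0]}$ is handled by Remark~\ref{rem:conventions} (the $1_{(A)}$ notation: $1 \in \wc^{\delta}\langle E\rangle(A)$ iff $A = \emptyset$ and $1 \notin E$).

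Next, termination and completeness. Because all the $E$- and $A$-parts of every restriction in the system are sets of normalized blocks of the finite set $\Bstar$, and there are only finitely many such patterns, there are only finitely many possible restrictions $\wc^{\delta}\langle E\rangle(A)$ (with $\delta \in \{~,+,-\}$) that can ever appear. The main \texttt{while} loop of \textsc{Specification} processes each equation once and only generates equations for restrictions not yet present, so the number of iterations is bounded by this finite count; likewise the inner loops over embeddings, over $K^{\pi}_{\gamma}$, and over the Boolean atoms in \textsc{Disambiguate} are all finite. Completeness is then immediate from the loop invariant: the algorithm exits only when every right-only restriction has been given an equation, and $\C = \wc\langle\Bstar\rangle(\emptyset)$ is the left-hand side of the first one. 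I would also restate Remark~\ref{rem:conventions} here to normalize the representation of restrictions so that ``$\D_i$ appears'' and ``$\D_i$ is a left-hand side'' match up literally.

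The main obstacle — and where I would spend the most care — is proving that Steps~3 and~4 are \emph{both} correct \emph{and} produce disjoint unions. Step~2 alone leaves the union ambiguous (as Example~\ref{ex:algo1} shows: two $\oplus$-terms or two $3142$-terms can overlap). The disambiguation via the inclusion-of-atoms picture of Figure~\ref{fig:union} is combinatorially clean for a union of arbitrary sets, but here we need that after intersecting/complementing restriction terms we still get restriction terms: this is where stability of restriction terms under intersection (intersection componentwise for a common root $\pi$, by uniqueness of the substitution decomposition, Theorem~\ref{thm:decomp_perm_AA05}) is essential, and also that the complement of $\pi[\D_1,\dots,\D_n]$ \emph{within a fixed root class} can be expressed using containment constraints on the $\D_i$. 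The delicate point is that terms with \emph{different} roots $\pi \neq \pi'$ are automatically disjoint (again by uniqueness of the decomposition), so disambiguation only needs to be done among terms sharing a root and among the finitely many $\oplus$/$\ominus$-terms; I would make this observation explicit early to reduce Step~4 to the single-root case, and then show that propagating the containment constraints in Step~3 (before disambiguating) is exactly what makes the remaining same-root terms amenable to the Figure~\ref{fig:union} rewriting with restriction terms as outputs. The correctness of \textsc{EqnForRestriction} at Step~3 — that pushing a containment constraint $(\alpha)$ through a root $\pi$ is governed by the embeddings of $\alpha$ in $\pi$ in a way dual to Proposition~\ref{prop:restrictions=blocs} — is the technical heart, and I would prove it as a standalone proposition before assembling the theorem.
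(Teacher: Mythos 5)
Your plan matches the paper's own argument: the paper proves the containment analogue of Proposition~\ref{prop:restrictions=blocs} as the standalone Proposition~\ref{prop:containment=blocs}, handles disambiguation per root via intersections and complements of restriction terms (Propositions~\ref{prop:complementPi} and~\ref{prop:complementRestriction}, where complementing $\langle\gamma\rangle$ indeed produces $(\gamma)$ and vice versa, keeping all patterns among the normalized blocks of $\Bstar$), and gets termination and completeness from the finiteness of the set of such blocks. The proposal is correct and takes essentially the same route.
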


\begin{cor}
\label{cor:QCS2}
Let $\E$ be the system of equations output by \textsc{Specification}$(B,\s_\C)$.
Then the set $\{\D : \D \text{ is the left part of some equation of } \E \}$ is a query-complete set containing \C.
\end{cor}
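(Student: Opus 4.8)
The plan is to set $\cP = \{\D : \D \text{ is the left part of some equation of } \E\}$ and check the two assertions of the corollary separately: that $\C\in\cP$, and that $\cP$ is query-complete. The first is immediate from Theorem~\ref{thm:systemeNonAmbigu}, which states that $\C$ appears as the left part of one of the equations of $\E$; hence $\C\in\cP$. For the second, the proof would read off query-completeness directly from the shape of the equations guaranteed by Theorem~\ref{thm:systemeNonAmbigu}, exactly as Corollary~\ref{cor:QCS} did from Theorem~\ref{thm:systemeAmbigu}.

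For query-completeness, I would fix an arbitrary property $\D_0\in\cP$ and an arbitrary root $\sigma$ (a simple permutation of \SC, or $\oplus$, or $\ominus$), and show that membership of $\tau:=\sigma[\alpha_1,\dots,\alpha_m]$ in $\D_0$ (with $m=|\sigma|$) can be decided from the mere knowledge of which properties of $\cP$ each $\alpha_i$ satisfies. By Theorem~\ref{thm:systemeNonAmbigu}, $\D_0$ is the left part of an equation
\[
\D_0 = {\mathit{1}}_{[1 \in \D_0]} \uplus \biguplus_{t} \pi_t[\D_1^t,\dots,\D_{n_t}^t],
\]
and the completeness clause of that theorem guarantees that every child restriction $\D_j^t$ is itself a left part, hence an element of $\cP$. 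Since $m\geq 2$, we have $\tau\neq 1$, so the term ${\mathit{1}}_{[1 \in \D_0]}$ plays no role. By uniqueness of the substitution decomposition (Theorem~\ref{thm:decomp_perm_AA05}), $\tau$ belongs to a given term $\pi_t[\D_1^t,\dots,\D_{n_t}^t]$ precisely when the root $\pi_t$ coincides with $\sigma$ (so that $n_t=m$) and each $\alpha_i$ lies in $\D_i^t$. Thus $\tau\in\D_0$ if and only if some term of the equation has root $\sigma$ and satisfies $\alpha_i\in\D_i^t$ for all $i$ — a condition depending only on the memberships $\alpha_i\in\D_i^t$, each of which is a query about a property of $\cP$. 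As there are finitely many terms, this yields a decision procedure. I would stress that, unlike the construction of a specification, this argument does \emph{not} use the disjointness of the union: only the fact that $\D_0$ is a finite union of terms whose membership is decidable from the roots and the children.

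The point requiring care — which I expect to be the main subtlety — is the linear case $\sigma\in\{\oplus,\ominus\}$, where the decomposition $\tau=\sigma[\alpha_1,\alpha_2]$ used to test membership must agree with the canonical first-level decomposition of Theorem~\ref{thm:decomp_perm_AA05}. This is the setting in which the notion of query-completeness is applied, and it forces $\alpha_1$ to be $\oplus$-indecomposable (resp.\ $\ominus$-indecomposable). Correspondingly, the terms of the equation with linear root carry a first child of the form $\wc^+\langle E\rangle(A)$ (resp.\ $\wc^-\langle E\rangle(A)$), that is, a restriction with $\delta\in\{+,-\}$ as provided by Theorem~\ref{thm:systemeNonAmbigu}. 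I would observe that this indecomposability information is recorded inside $\cP$ through the superscript $\delta$, so that the query ``$\alpha_1\in\D_1^t$'' is well posed and automatically enforces the uniqueness constraint of the substitution decomposition. The reduction of the previous paragraph then applies verbatim to linear roots as well. Combining these observations establishes both $\C\in\cP$ and the query-completeness of $\cP$, which is exactly the content of the corollary.
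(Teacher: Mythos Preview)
Your proposal is correct and follows the same approach as the paper: the paper's proof is simply ``This is a direct consequence of the form of $\E$ described in Theorem~\ref{thm:systemeNonAmbigu}'', and you have unpacked precisely what that consequence is. One small omission: you restrict $\sigma$ to $\s_\C\cup\{\oplus,\ominus\}$, whereas the definition of query-completeness quantifies over \emph{all} simple permutations; but for $\sigma$ simple and not in $\s_\C$ the inflation $\sigma[\alpha_1,\dots,\alpha_m]$ lies outside $\wc$ and hence outside every $\D_0\in\cP$, so that case is trivially decidable.
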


\begin{proof}
This a direct consequence of the form of \E described in Theorem~\ref{thm:systemeNonAmbigu}.
\end{proof}

\subsection{Computing an equation for each \rs}\label{ssec:computeEqn}

Let $\wc^{\delta}\langle E \rangle(A)$ be a \rs. Our goal here is to find
an equation describing this \rs using smaller \rts
(smaller w.r.t.\ inclusion).

If $A = \emptyset$, this is exactly the problem addressed in Section~\ref{sec:addConstraint}
and solved by pushing down the pattern avoidance constraints with the procedure \textsc{AddConstraints} of~\textsc{EqnForClass} (Algo.~\ref{alg:comp-eqn}).
The procedure~\textsc{EqnForRestriction} (Algo.~\ref{alg:ComputeEquationForRestriction}) below shows how to propagate also the pattern \emph{containment} constraints induced by $A \neq \emptyset$.

\SetKwBlock{AMfunc}{\textsc{AddMandatory}}{end}
\begin{algorithm}[htbp]
\DontPrintSemicolon 
\KwData{$\wc^{\delta}, E,A$ with $E,A$ sets of permutations, $\wc^{\delta}$  given by $\s_\C$ and $\delta \in \{~, +, -\}$.}
\KwResult{An equation defining $\wc^{\delta}\langle E \rangle(A)$ as a union of \rts.}
\SetKw{In}{in} 
\Begin{
$\mathcal{E} \leftarrow$ Equation \eqref{eqn:Wc1} or \eqref{eqn:Wc2} or \eqref{eqn:Wc3} (depending on $\delta$), replacing the left part $\wc^{\delta}$ by $\wc^{\delta}\langle E\rangle(A)$\;

\ForEach{avoidance constraint $\gamma$ \In $E$}{
	\ForEach{$t=\pi[\C_1, \dots , \C_n]$ that appears \In $\mathcal{E}$}{
		$t\leftarrow$ \textsc{AddConstraints}$(\pi[\C_1, \dots , \C_n], \gamma )$ 
		\tcc*{See Algo.\,\ref{alg:comp-eqn} (p.\pageref{alg:comp-eqn})}
	}
}
\ForEach{containment constraint $\gamma$ \In $A$}{
	\ForEach{$t=\pi[\C_1, \dots , \C_n]$ that appears \In $\mathcal{E}$}{
		$t\leftarrow$ \textsc{AddMandatory}$(\pi[\C_1, \dots , \C_n], \gamma )$ 
	}
}
\Return $\mathcal{E}$ \;
}

\bigskip
\SetKwSty{textsc}
\AMfunc(\params{$\pi[\D_1, \dots, \D_n],\gamma$}){\SetKwSty{textbf}
\Return a rewriting of $\pi[\D_1, \dots, \D_n] (\gamma)$ as a union of \rts using Eq.~\eqref{eq:AddMandatory} below\;
}
\caption{\textsc{EqnForRestriction}$(\wc^{\delta},E,A)$ \label{alg:ComputeEquationForRestriction}}
\end{algorithm}

The pattern \emph{containment} constraints are propagated by \textsc{AddMandatory}, in a very similar fashion to the pattern \emph{avoidance} constraints propagated by \textsc{AddConstraints}.
To compute $t(\gamma)$ for $\gamma$ a permutation and $t = \pi[\D_1, \dots, \D_n]$ a restriction term, we first  compute all embeddings of $\gamma$ into $\pi$.
In this case, a permutation belongs to $t(\gamma)$ if and only if at least one embedding is satisfied.
Then $\pi[\D_1, \dots, \D_n] ( \gamma )$ rewrites as a union of sets of the form $\pi[\D_1(\gamma_1), \dots, \D_n(\gamma_n)]$ where, 
for all $i$, $\gamma_i$ is a normalized block of $\gamma$ which may be empty or $\gamma$ itself 
(recall that if $\gamma_i$ is empty, then  $\D_{j}(\gamma_i) = \D_j$). More precisely:
\begin{prop}\label{prop:containment=blocs}
Let $\pi$ be a permutation of size $n$ and $\D_1, \dots, \D_n$ be sets of permutations.
For any permutation $\gamma$, let $\{\alpha_1, \dots, \alpha_\ell\}$ be the set of embeddings of $\gamma$ in $\pi$, then 
\begin{equation}\label{eq:AddMandatory}
\pi[\D_1, \dots ,\D_n] ( \gamma )  = 
	\bigcup_{i=1}^{\ell} \pi[\D_{1}(\gamma_{\alpha_{i}(1)}),\D_{2}(\gamma_{\alpha_{i}(2)}),\ldots,\D_{n}(\gamma_{\alpha_{i}(n)})].
\end{equation}
\end{prop}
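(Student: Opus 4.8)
\textbf{Proof proposal for Proposition~\ref{prop:containment=blocs}.}

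The plan is to mimic the structure of the proof of Proposition~\ref{prop:restrictions=blocs}, but with the roles of ``union'' and ``intersection'' exchanged, since a containment constraint is satisfied as soon as \emph{one} embedding is realized, whereas an avoidance constraint requires \emph{all} embeddings to be blocked. First I would fix a permutation $\sigma = \pi[\sigma^{(1)}, \dots, \sigma^{(n)}]$ with each $\sigma^{(k)} \in \D_k$, and observe that by Theorem~\ref{thm:decomp_perm_AA05} such a decomposition is uniquely determined by $\sigma$ (when $\pi$ is simple; for $\pi = \oplus$ or $\ominus$ we additionally require the first component to be $\oplus$- resp.\ $\ominus$-indecomposable, but this does not affect the argument here). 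The key combinatorial fact I would establish is: \emph{$\sigma$ contains $\gamma$ if and only if there exists an embedding $\alpha_i$ of $\gamma$ in $\pi$ such that $\sigma^{(k)}$ contains $\gamma_{\alpha_i(k)}$ for every $k \in [1..n]$.} Informally, an occurrence of $\gamma$ in $\sigma$ selects, for each position $k$ of the root $\pi$, a (possibly empty) set of selected entries falling inside the block $\sigma^{(k)}$; because $\sigma^{(k)}$ is an interval of $\sigma$, these selected entries of $\gamma$ form a block of $\gamma$, hence a normalized block $\gamma_{I_k}$; and since the root structure of the occurrence is a subpattern of $\pi$, the assignment $k \mapsto I_k$ is exactly one of the embeddings $\alpha_i$ from Definition~\ref{defi:assignation}. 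Conversely, given such an $\alpha_i$ and occurrences of each $\gamma_{\alpha_i(k)}$ in $\sigma^{(k)}$, concatenating them yields an occurrence of $\pi\{\gamma_{\alpha_i(1)}, \dots, \gamma_{\alpha_i(n)}\} = \gamma$ in $\sigma$.

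From this equivalence, the set-theoretic identity follows directly:
\[
\pi[\D_1, \dots, \D_n](\gamma) = \bigcup_{i=1}^{\ell} \bigcap_{k=1}^{n} \pi[\D_1, \dots, \D_k(\gamma_{\alpha_i(k)}), \dots, \D_n],
\]
and then, using the component-wise intersection property of restriction terms noted just before the statement (which again rests on uniqueness of the substitution decomposition), the inner intersection over $k$ collapses to the single term $\pi[\D_1(\gamma_{\alpha_i(1)}), \dots, \D_n(\gamma_{\alpha_i(n)})]$. Here I would also recall the conventions from Remark~\ref{rem:conventions} and the preceding discussion: $\D_k(\varepsilon) = \D_k$ (an empty mandatory pattern is automatically contained), and $\D_k(1) = \D_k$ as well since every non-empty permutation contains $1$ — so unlike in the avoidance case these terms are not discarded but simply simplified, which is why no analogue of the set $K^\pi_\gamma$ appears. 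Finally I would note that the extension from a simple root $\pi$ to the linear roots $\oplus$ and $\ominus$ is verbatim the same as in Proposition~\ref{prop:restrictions=blocs}: uniqueness of the decomposition is restored by restricting the first component to $\C_1^+$ (resp.\ $\C_1^-$), and the embeddings of $\gamma$ in $12$ (resp.\ $21$) play the role of the $\alpha_i$.

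The main obstacle I anticipate is making the first equivalence fully rigorous, in particular the claim that every occurrence of $\gamma$ in $\sigma$ induces \emph{exactly one} embedding in the precise sense of Definition~\ref{defi:assignation} (the three bullet conditions: monotonicity of non-empty images, the factorization-of-$1\ldots|\gamma|$ condition, and $\pi\{\gamma_{\alpha(1)}, \dots, \gamma_{\alpha(n)}\} = \gamma$). One must check that the blocks $I_k$ cut out of $\gamma$ by an occurrence are genuinely intervals of $\gamma$ (this uses that the $\sigma^{(k)}$ are intervals of $\sigma$ and that consecutive blocks of $\sigma$ are adjacent), that they are ordered compatibly with $\pi$ (monotonicity), and that the surviving skeleton is order-isomorphic to the corresponding sub-pattern of $\pi$ — and conversely that distinct occurrences may legitimately give the same embedding, which is harmless since we only need existence on each side. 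This bookkeeping is the analogue of the paragraph in the proof of Proposition~\ref{prop:restrictions=blocs} beginning ``$\sigma$ avoids $\gamma$ \ssi \ldots'', and once it is in place the rest is a short formal computation.
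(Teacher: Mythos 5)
Your proposal is correct and follows essentially the same route as the paper's (much terser) proof, whose entire content is the key equivalence you identify: $\sigma=\pi[\sigma^{(1)},\dots,\sigma^{(n)}]$ contains $\gamma$ if and only if some embedding $\alpha_i$ has $\gamma_{\alpha_i(k)}\preccurlyeq\sigma^{(k)}$ for all $k$. Your extra detour through an intersection over $k$ (collapsed via uniqueness of the substitution decomposition) and your careful bookkeeping of the embedding conditions are just a more explicit rendering of the same argument.
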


For instance, for $t = 3142[\D_{1},\D_{2},\D_{3},\D_{4}]$ and $\gamma = 546312$, there are $12$ embeddings of~$\gamma$ into $3142$ (see Table~\ref{table:embeddings} p.\pageref{table:embeddings}),
and the embedding $3142\{21,\varepsilon,1,312\}$ contributes to the above union with the term $3142[\D_{1}(21),\D_{2},\D_{3}(1),\D_{4}(312)]$.

\begin{proof}
Let $\sigma \in \pi[\D_1, \dots, \D_n]$; then $\sigma = \pi[\sigma_1, \dots, \sigma_n]$ where each $\sigma_k\in\D_k$. 
Then $\sigma$ contains~$\gamma$, if and only if there exists an embedding $\alpha_i$ of $\gamma  = \pi[\gamma_{\alpha_i(1)} \dots \gamma_{\alpha_i(n)}]$
such that $\gamma_{\alpha_i(j)}$ is a pattern of $\sigma_j$ for all $j$.
\end{proof}

Hence, any \rt $t = \pi[\D_1, \dots, \D_n](\gamma)$ rewrites as a (possibly ambiguous) union of \rts.

\subsection{Disambiguation procedure}\label{sec:disambiguate}

\medskip

As explained earlier,~\textsc{Disambiguate} (Algo.~\ref{alg:disambiguise}) disambiguates equations introducing complement sets. 

\begin{algorithm}[htb]
\DontPrintSemicolon 
\KwData{A potentially ambiguous equation $F$ defining a restriction
\KwResult{A non-ambiguous equation equivalent to $F$ }
\Begin{
	\ForEach{root $\pi$ that appears several times in $F$}{
	Replace the union of the restriction terms of $F$ whose root is $\pi$ by a disjoint union using 
	Equations~\eqref{eq:DisambiguateRoot}, \eqref{eq:ComplementTerm} and \eqref{eq:14} below.\;
}
\Return $F$\;}
}
\caption{\textsc{Disambiguate}($F$)} \label{alg:disambiguise}
\end{algorithm}

Every equation produced by~\textsc{EqnForRestriction} is written as $t =1 \cup t_{1} \cup t_{2} \cup t_{3} \ldots
\cup t_{k}$ where the sets $t_{i}$ are \rts (some $\pi[\D_{1},\D_{2},\ldots,\D_{n}]$) and $t$ is a
\rs (some $\wc^{\delta}\langle E \rangle(A)$).
By uniqueness of the substitution decomposition of a permutation, \rts
of this union which have different roots $\pi$ are disjoint. Thus for
an equation we only need to disambiguate unions of terms with same
root. 
For example in Equation~\eqref{eqn:ambigu1} (p.\pageref{eqn:ambigu1}), there are two pairs
of ambiguous terms which are terms with root $3 1 4 2$ and terms with root $\oplus$.
Every ambiguous union can be written in an unambiguous way:
\begin{prop}\label{prop:union}
Let $A_1, \dots, A_n$ be $n$ sets and for each of them denote
$\overline{A_i}$ the complement of $A_i$ in any set containing
$\bigcup_{i=1}^{n} A_i$.  The union $\bigcup_{i=1}^{n} A_i$ rewrites
as the disjoint union of the $2^n-1$ sets of the form
$\bigcap_{i=1}^{n} X_i$ with $X_i \in \{A_i, \overline{A_i}\}$ and at
least one $X_i$ is equal to $A_i$.
\end{prop}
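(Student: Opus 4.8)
\textbf{Proof proposal for Proposition~\ref{prop:union}.}

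The plan is to prove this by a standard inclusion-exclusion / partition argument on the Boolean lattice generated by the sets $A_1, \dots, A_n$. First I would observe that the $2^n$ sets of the form $\bigcap_{i=1}^n X_i$ with $X_i \in \{A_i, \overline{A_i}\}$ are pairwise disjoint: if two such intersections differ, then for some index $i$ one of them is contained in $A_i$ and the other in $\overline{A_i}$, so their intersection is empty. Moreover their union is the whole ambient set, since every element $x$ lies in exactly one of them — namely the one obtained by choosing $X_i = A_i$ precisely for those $i$ with $x \in A_i$. These $2^n$ cells thus form a partition of the ambient set.

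Next I would identify which cells make up $\bigcup_{i=1}^n A_i$. An element $x$ belongs to $\bigcup_{i=1}^n A_i$ if and only if $x \in A_i$ for at least one $i$, which is exactly the condition that, in the unique cell containing $x$, at least one $X_i$ equals $A_i$. Hence $\bigcup_{i=1}^n A_i$ is precisely the union of those cells $\bigcap_{i=1}^n X_i$ for which not all $X_i$ equal $\overline{A_i}$; there are $2^n - 1$ such cells (all choices except the one with every $X_i = \overline{A_i}$, which equals $\bigcap_i \overline{A_i}$, the complement of $\bigcup_i A_i$). Since these cells are pairwise disjoint by the first step, this expresses $\bigcup_{i=1}^n A_i$ as a disjoint union of $2^n-1$ sets, as claimed.

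There is essentially no obstacle here — this is the elementary combinatorics of the atoms of a finite Boolean algebra, and the only care needed is the bookkeeping: confirming the count $2^n-1$ and noting that the excluded cell is exactly the complement of the union. In the write-up I would keep it to a few lines, perhaps remarking (as the figure suggests) that in the application the $A_i$ are restriction terms sharing a common root $\pi$, so that each cell $\bigcap_i X_i$ is again computable as a restriction term by the component-wise intersection rule for terms with the same root, which is what makes this proposition usable in \textsc{Disambiguate}.
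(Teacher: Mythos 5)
Your proof is correct, and it is exactly the elementary partition-into-Boolean-atoms argument that the paper relies on implicitly: the authors state Proposition~\ref{prop:union} without proof, offering only Figure~\ref{fig:union} as an illustration of precisely this decomposition. Your write-up simply makes explicit what the paper leaves to the reader, and your closing remark about component-wise intersection of restriction terms sharing a root matches how the proposition is used in \textsc{Disambiguate}.
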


This proposition is the starting point of the disambiguation. See Figure~\ref{fig:union} (p.\pageref{fig:union}) for an example. 
In order to use Proposition~\ref{prop:union}, we have to choose in which set we take complements.

\begin{defi}
For any restriction term $t$, we define its complement $\overline{t}$ as follows:
\begin{itemize}
 \item if $t=\pi [\D_1, \dots, \D_n]$ with $\pi$ simple and for all $i$, $\D_i \subset \wc$,
we set $\overline{t}=\pi [\wc, \dots, \wc] \setminus t$;
 \item if $t = \oplus [\D_1^+, \D_2]$ with $\D_1,\D_2 \subset \wc$, we set $\overline{t}=\oplus [\wc^+, \wc] \setminus t$;
 \item if $t = \ominus [\D_1^-, \D_2]$ with $\D_1,\D_2 \subset \wc$, we set $\overline{t}=\ominus [\wc^-, \wc] \setminus t$.
\end{itemize}
Moreover for any restriction $\D$ of $\wc^{\delta}$ with $\delta \in \{~, +, -\}$,
we set $\overline{\D}=\wc^{\delta} \setminus \D$.
\end{defi}

From Proposition~\ref{prop:union}, every ambiguous union of restriction terms sharing the same root in an equation of our system can be
written in the following unambiguous way:
\begin{equation}\label{eq:DisambiguateRoot}
	\textstyle\bigcup_{i=1}^{k} t_{i}=\textstyle\biguplus_{X
	  \subseteq [1\ldots k], X \not= \emptyset} \bigcap_{i \in X} t_{i}
	\cap \bigcap_{i \in \overline{X}} \overline{t_{i}}.
\end{equation}

For instance, consider terms with root $3142$ in Equation~\eqref{eqn:ambigu1}:
$t_{1} = 3 1 4 2 [\wc\langle1 2 \rangle , \wc\langle1 2 \rangle , $ $
\wc\langle1 3 2 \rangle , \wc\langle1 3 2 \rangle]$ and $t_{2} = 3 1
4 2 [\wc\langle1 2 4 3 \rangle , \wc\langle1 2 \rangle , \wc\langle2
1 \rangle , \wc\langle1 3 2
\rangle]$.  Equation~\eqref{eq:DisambiguateRoot} applied to
$t_1 \cup t_2$ in Equation~\eqref{eqn:ambigu1} gives an expression of the form
\[\wc\langle1243\rangle = 1 \cup \oplus[\ldots] \cup \oplus[\ldots] \cup \ominus[\ldots] 
\cup (t_{1} \cap t_{2}) \uplus (t_{1} \cap \overline{t_{2}}) \uplus (\overline{t_{1}} \cap t_{2})\textrm{.}
\]

We now explain how to compute the complement $\overline{t}$ of a \rt $t$.

\begin{prop}\label{prop:complementPi}
Let $t=\pi [\D_1, \dots, \D_n]$ be a \rt.
Then $\overline{t}$ is the disjoint union of the $2^n-1$ sets of the form $\pi [\X_1, \dots, \X_n]$ with $\X_i \in \{\D_i, \overline{\D_i}\}$, 
and not all $\X_i$ are equal to $\D_i$:
\begin{equation}\label{eq:ComplementTerm}
\overline{t}=\biguplus_{\X \subseteq \{1,\ldots,n\}, \X \not= \emptyset} \pi[\D'_{1},\ldots,\D'_{n}] \mbox{ where }\D'_{i} = \overline{\D_{i}}\mbox{  if }i \in \X\mbox{ and }\D'_{i} = \D_{i}\mbox{  otherwise},
\end{equation}
\end{prop}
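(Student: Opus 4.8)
\textbf{Proof plan for Proposition~\ref{prop:complementPi}.}

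The plan is to prove this by a direct set-theoretic argument, establishing that the right-hand side of~\eqref{eq:ComplementTerm} equals $\pi[\wc,\dots,\wc] \setminus t$. First I would recall that, since $\D_i \subseteq \wc$ for every $i$, we have by the definition of the complement that $\overline{t} = \pi[\wc,\dots,\wc] \setminus \pi[\D_1,\dots,\D_n]$ when $\pi$ is simple (and the analogous statements with $\wc^+$ or $\wc^-$ in the first component when $\pi = \oplus$ or $\ominus$; I will treat the simple case and remark that the other two are identical up to replacing $\wc$ by $\wc^{\pm}$ in the first slot). The key structural fact I would invoke is the uniqueness of the substitution decomposition (Theorem~\ref{thm:decomp_perm_AA05}), which guarantees that every permutation $\sigma \in \pi[\wc,\dots,\wc]$ admits a \emph{unique} writing $\sigma = \pi[\sigma_1,\dots,\sigma_n]$ with each $\sigma_i \in \wc$. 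This gives a bijection between $\pi[\wc,\dots,\wc]$ and the Cartesian product $\wc^{\,n}$ (in the $\oplus/\ominus$ cases, between $\oplus[\wc^+,\wc]$ and $\wc^+ \times \wc$, etc.), which is the engine of the whole argument.

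Under this bijection, the set $t = \pi[\D_1,\dots,\D_n]$ corresponds exactly to the ``box'' $\D_1 \times \dots \times \D_n$, and more generally each term $\pi[\D'_1,\dots,\D'_n]$ on the right-hand side of~\eqref{eq:ComplementTerm} corresponds to the box $\D'_1 \times \dots \times \D'_n$ where $\D'_i \in \{\D_i, \overline{\D_i}\}$. So the statement reduces to the elementary fact that, in a product $\wc^{\,n}$, the complement of a box $\D_1 \times \dots \times \D_n$ is the disjoint union, over all nonempty $\X \subseteq \{1,\dots,n\}$, of the boxes obtained by replacing $\D_i$ with $\overline{\D_i}$ precisely for $i \in \X$. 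For the disjointness: two such boxes indexed by distinct $\X, \X' \neq \emptyset$ differ in some coordinate $j$ (say $j \in \X \setminus \X'$, without loss of generality), where one box has $\overline{\D_j}$ and the other has $\D_j$ in position $j$; since $\D_j$ and $\overline{\D_j}$ are disjoint, so are the boxes. For the covering: a tuple $(\sigma_1,\dots,\sigma_n) \in \wc^{\,n}$ lies outside $\D_1 \times \dots \times \D_n$ iff the set $\X := \{i : \sigma_i \notin \D_i\}$ is nonempty, and then the tuple lies in exactly the box indexed by that $\X$. Pulling this back through the substitution-decomposition bijection, and using that for a simple $\pi$ we have $\pi[\X_1,\dots,\X_n] \cap \pi[\Y_1,\dots,\Y_n] = \pi[\X_1\cap\Y_1,\dots,\X_n\cap\Y_n]$ (a fact already used in the proof of Proposition~\ref{prop:restrictions=blocs}), we obtain~\eqref{eq:ComplementTerm}.

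I do not anticipate a serious obstacle here: the whole content is the translation of the trivially-true product-complement identity through the substitution-decomposition bijection. The only points requiring a little care are (i) checking that each $\overline{\D_i}$ is itself a restriction of $\wc$ (resp.\ $\wc^+$ or $\wc^-$ in the first slot), so that the terms on the right-hand side are genuine restriction terms in the sense needed downstream — this is immediate from the definition $\overline{\D} = \wc^{\delta} \setminus \D$; and (ii) making sure the three bullet cases of the definition of $\overline{t}$ (simple $\pi$, $\oplus$, $\ominus$) are all covered, which amounts to noting that in the $\oplus$ (resp.\ $\ominus$) case the first factor ranges over $\wc^+$ (resp.\ $\wc^-$) rather than $\wc$, and uniqueness of the substitution decomposition with $\oplus$-indecomposable (resp.\ $\ominus$-indecomposable) first component supplies the needed bijection. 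With these remarks in place, the argument is complete.
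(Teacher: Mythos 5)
Your proposal is correct and follows essentially the same route as the paper's own proof: both rely on the uniqueness of the substitution decomposition to identify $\pi[\wc,\dots,\wc]$ with a product of the component sets, then establish the identity by double inclusion together with disjointness of the resulting terms (handling $\oplus$ and $\ominus$ by restricting the first component to $\wc^+$ or $\wc^-$). Your explicit packaging of the argument as the complement-of-a-box identity in a Cartesian product is just a cleaner phrasing of the same idea.
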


For example,
$\overline{\ominus[\D_{1},\D_{2}]} =
\ominus[\D_{1},\overline{\D_{2}}] \uplus \ominus[\overline{\D_{1}},\D_{2}]
\uplus \ominus[\overline{\D_{1}},\overline{\D_{2}}]$.

\begin{proof}
Recall that $\overline{t}=\pi [\wc, \dots, \wc] \setminus t$. Let $\sigma = \pi[\sigma_1, \dots, \sigma_n] \in \overline{t}$. Assume that for each $i$ $\sigma_i \in \D_i$, by uniqueness of substitution
decomposition we get a contradiction. Therefore $\sigma  \in \pi [\X_1, \dots, \X_n]$ with $\X_i \in \{\D_i, \overline{\D_i}\}$, 
and not all $\X_i$ are equal to $\D_i$.

Conversely if $\sigma \in \pi [\X_1, \dots, \X_n]$ with $\X_i \in \{\D_i, \overline{\D_i}\}$ and at least one $\X_i$ is equal to $\overline{\D_i}$,
then $\sigma \in \pi [\wc, \dots, \wc]$ and $\sigma \notin t$.

Finally for such sets $\X_i$ that are distinct, the sets $\pi [\X_1, \dots, \X_n]$ are disjoints.
Indeed $\D_i \cap \overline{\D_i}$ is empty and the writing as $\pi [\sigma_1, \dots, \sigma_n]$ is unique. 
So the union describing $\overline{t}$ is disjoint.
The proof is similar when $\pi = \oplus$ or $\pi = \ominus$.
\end{proof}

Proposition~\ref{prop:complementPi} shows that $\overline{t_{i}}$ is not a \rt in general.
However it can be expressed as a disjoint union of some $\pi[\D'_{1},\ldots,\D'_{n}]$,
where the $\D'_i$ are either \rss or complements of \rss.
The complement operation being pushed from \rts down to
\rss, we now compute~$\overline{\D}$, for a given \rs $\D = \wc^{\delta}\langle
E\rangle(A)$, $\overline{\D}$ denoting the
set of permutations of $\wc^{\delta}$ that are not in~$\D$. Note
that, given a permutation $\sigma$ of $A$, then any permutation $\tau$ of
$\wc^{\delta}\langle \sigma \rangle$ is in $\overline{{\mathcal
    D}}$ because $\tau$ avoids $\sigma$ whereas permutations of
$\D$ must contain $\sigma$. Symmetrically, if a
permutation $\sigma$ is in  $E$ then permutations of
$\wc^{\delta}(\sigma)$ are in $\overline{{\mathcal
    D}}$. It is straightforward to check that $\textstyle
\overline{\wc^{\delta}\langle E \rangle (A)} = \big[
\bigcup_{\sigma \in E} \wc^{\delta}(\sigma)\big]
\bigcup \big[ \bigcup_{\sigma \in A}
\wc^{\delta}\langle\sigma\rangle\big]$.
Unfortunately this expression is ambiguous.
As before, we can rewrite it as an unambiguous union:

\begin{prop}\label{prop:complementRestriction}
Let $\D = \wc^{\delta}\langle E \rangle (A)$ be a restriction with $\delta \in \{~, +, -\}$, $k = |E|$ and $\ell = |A|$.
Then $\overline{\D}$ is the disjoint union of the $2^{k+\ell}-1$ restrictions $\wc^{\delta} \langle E' \rangle(A')$ with 
$(E',A')$ a partition of $E \uplus A$ such that $(E',A') \neq (E,A)$. In other words,
\begin{equation} \label{eq:14}
\overline{\wc^{\delta}\langle E \rangle (A)}
= \biguplus_{\underset{X \times Y \not=
    \emptyset\times\emptyset}{{X\subseteq A, Y \subseteq E}}}
\wc^{\delta}\langle X\cup\overline{Y} \rangle (Y \cup \overline{X}) \textrm{, where } \overline{X} = A \setminus X \textrm{ and }\overline{Y} = E \setminus Y \textrm{.}
\end{equation}
\end{prop}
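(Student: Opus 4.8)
The plan is to build $\overline{\wc^{\delta}\langle E \rangle (A)}$ out of the single-constraint pieces and then disambiguate the resulting union. The starting point is the (ambiguous) identity recalled immediately before the statement, namely
\[
\overline{\wc^{\delta}\langle E \rangle (A)} \;=\; \Big[ \textstyle\bigcup_{\sigma \in E} \wc^{\delta}(\sigma)\Big] \;\cup\; \Big[ \textstyle\bigcup_{\sigma \in A} \wc^{\delta}\langle\sigma\rangle\Big],
\]
which holds simply because a permutation $\tau \in \wc^{\delta}$ fails to lie in $\wc^{\delta}\langle E \rangle (A)$ exactly when $\tau$ contains some $\sigma \in E$ or avoids some $\sigma \in A$. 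By Remark~\ref{rem:conventions} we have $A \cap E = \emptyset$ (and $1,\varepsilon \notin E\cup A$), so this is a union of $k+\ell$ sets indexed by the genuinely disjoint set $E \uplus A$.

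Next I would apply Proposition~\ref{prop:union} to this union, taking all complements inside $\wc^{\delta}$. This writes $\overline{\D}$ as the disjoint union, over all pairs $(X,Y)$ with $X \subseteq A$, $Y \subseteq E$ and $(X,Y) \neq (\emptyset,\emptyset)$, of the intersection
\[
\Big(\textstyle\bigcap_{\sigma \in Y} \wc^{\delta}(\sigma)\Big) \cap \Big(\textstyle\bigcap_{\sigma \in X} \wc^{\delta}\langle\sigma\rangle\Big) \cap \Big(\textstyle\bigcap_{\sigma \in E \setminus Y} \overline{\wc^{\delta}(\sigma)}\Big) \cap \Big(\textstyle\bigcap_{\sigma \in A \setminus X} \overline{\wc^{\delta}\langle\sigma\rangle}\Big),
\]
where $Y$ (resp.\ $X$) records the elements of $E$ (resp.\ $A$) kept in ``positive'' form. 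Since every permutation of $\wc^{\delta}$ either contains or avoids a fixed $\sigma$, one has $\overline{\wc^{\delta}(\sigma)} = \wc^{\delta}\langle\sigma\rangle$ and $\overline{\wc^{\delta}\langle\sigma\rangle} = \wc^{\delta}(\sigma)$; substituting this and then collapsing the intersection via stability of restrictions ($\cP\langle E_1 \rangle(A_1) \cap \cP\langle E_2 \rangle(A_2) = \cP\langle E_1 \cup E_2\rangle(A_1 \cup A_2)$) turns the display into $\wc^{\delta}\langle X \cup (E\setminus Y)\rangle\big(Y \cup (A\setminus X)\big)$, i.e.\ $\wc^{\delta}\langle X \cup \overline{Y}\rangle(Y \cup \overline{X})$ with $\overline{X} = A\setminus X$ and $\overline{Y} = E\setminus Y$ --- exactly the generic term of Equation~\eqref{eq:14}.

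Finally I would tidy up the indexing. The assignment $(X,Y)\mapsto (E',A')$ with $E' = X\cup\overline{Y}$ and $A' = Y\cup\overline{X}$ is a bijection from pairs of subsets $(X\subseteq A,\ Y\subseteq E)$ to ordered partitions $(E',A')$ of $E\uplus A$, the inverse being $(E',A')\mapsto(E'\cap A,\, A'\cap E)$; under it the excluded pair $(\emptyset,\emptyset)$ corresponds to $(E,A)$, which is $\D$ itself and thus not part of $\overline{\D}$. Hence the union runs over the $2^{k+\ell}-1$ partitions of $E\uplus A$ distinct from $(E,A)$, and the union is disjoint either directly by Proposition~\ref{prop:union}, or because two distinct partitions move some $\sigma$ from one side to the other while no permutation simultaneously avoids and contains $\sigma$. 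I do not expect any real obstacle: the whole content is the one-line ambiguous identity together with Proposition~\ref{prop:union}, and the only point requiring care is the bookkeeping of which constraints land in $E'$ versus $A'$ after complementation, plus checking that $(X,Y)\leftrightarrow(E',A')$ is indeed a bijection onto all partitions.
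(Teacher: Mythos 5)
Your proof is correct, but it takes a genuinely different route from the paper's. The paper argues directly at the level of elements: given $\tau \in \wc^{\delta}\setminus\D$, it defines $E' = \{\pi \in E\cup A \mid \pi \not\preccurlyeq \tau\}$ and $A' = \{\pi \in E\cup A \mid \pi \preccurlyeq \tau\}$, observes that $(E',A')$ is a partition of $E\uplus A$ distinct from $(E,A)$ with $\tau \in \wc^{\delta}\langle E'\rangle(A')$, proves the converse inclusion, and notes disjointness because no permutation can both avoid and contain a given $\sigma$. You instead start from the ambiguous identity $\overline{\D} = \bigl[\bigcup_{\sigma\in E}\wc^{\delta}(\sigma)\bigr]\cup\bigl[\bigcup_{\sigma\in A}\wc^{\delta}\langle\sigma\rangle\bigr]$ and mechanically disambiguate it with Proposition~\ref{prop:union}, using $\overline{\wc^{\delta}(\sigma)} = \wc^{\delta}\langle\sigma\rangle$ and the stability of restrictions under intersection, then check that $(X,Y)\mapsto(X\cup\overline{Y},\,Y\cup\overline{X})$ bijects onto the partitions of $E\uplus A$ other than $(E,A)$. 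Both arguments are complete; yours is slightly longer but has the merit of explaining where the indexing of Equation~\eqref{eq:14} by pairs $(X,Y)$ actually comes from (it is literally the output of the generic disambiguation procedure applied to the natural ambiguous union, consistent with the machinery of the rest of the section), whereas the paper's element-chasing argument is shorter and self-contained. The only points needing care in your version --- that complements must be taken inside $\wc^{\delta}$ so that $\overline{\wc^{\delta}(\sigma)} = \wc^{\delta}\langle\sigma\rangle$, and that $E\cap A=\emptyset$ (Remark~\ref{rem:conventions}) so the inverse map $(E',A')\mapsto(E'\cap A,\,A'\cap E)$ is well defined --- are both handled.
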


\begin{proof}
Let $\tau \in \overline{\D} = \wc^{\delta} \setminus \D$.
Define $E' = \{\pi \in E \cup A \ |\ \pi \npreceq \tau \}$ and $A' = \{\pi \in E \cup A \ |\ \pi \preccurlyeq \tau \}$.
Observe that $\tau \in \wc^{\delta}\langle E' \rangle (A')$.
Moreover $E' \cap A' = \emptyset$, $E' \cup A' = E \cup A$ and $(E',A') \neq (E,A)$ otherwise $\tau$ would be in $\D$.

Conversely let $\tau \in \wc^{\delta}\langle E' \rangle (A')$ with $(E',A')$ a partition of $E \uplus A$ 
such that $(E',A') \neq (E,A)$, then $\tau \in \wc^{\delta}$.
As $(E',A') \neq (E,A)$, either there is some $\sigma$ in $E$ that $\tau$ contains, or there is some $\si$ in $A$ that $\tau$ avoids.
In both cases, $\tau \notin \D$ thus $\tau \in \wc^{\delta} \setminus \D = \overline{\D}$.

Finally for distinct partitions $(E',A')$ of $E \cup A$, the sets $\wc^{\delta}\langle E' \rangle (A')$ are disjoints.
Indeed a permutation in two sets of this form would have to both avoid and contain some permutation of $E \cup A$, which is impossible.
\end{proof}

Proposition~\ref{prop:complementRestriction} shows that $\overline{\D}$ is not a \rs in general but can be expressed as a disjoint
union of \rss. For instance,
\begin{small}
\begin{align*}
\overline{\wc\langle 231,123 \rangle (4321)} =
& \,\wc(123,\,231,\,4321) \,\uplus\,
\wc\langle 123 \rangle (231,4321) \,\uplus\,
\wc\langle 231 \rangle (123,4321) \,\uplus\,\\
& \,
\wc\langle 4321 \rangle (123,231) \,\uplus\,
\wc\langle 123,4321 \rangle (231) \,\uplus\,
\wc\langle 231,4321 \rangle (123) \,\uplus\,
\wc\langle 123,231,4321 \rangle.
\end{align*}
\end{small}
Moreover by uniqueness of the substitution decomposition, 
$$\pi[\D_1, \dots, \D_k \uplus \D'_k, \dots, \D_n] = \pi[\D_1, \dots, \D_k, \dots, \D_n] \uplus  \pi[\D_1, \dots, \D'_k, \dots, \D_n].$$
Therefore using Equations~\eqref{eq:ComplementTerm} and \eqref{eq:14} we have that for any \rt $t_i$, its complement $\overline{t_{i}}$ can be expressed as a disjoint
union of \rts:
\begin{prop}
For any \rt $t$, its complement $\overline{t}$ can be written as a disjoint union of \rts.
More precisely if $t=\pi [\D_1, \dots, \D_n]$ with $D_i = \wc^{\delta_i}\langle E_i \rangle (A_i)$ and $m = \sum_{i=1}^{n} |E_i| + |A_i|$,
then $\overline{t}$ is the disjoint union of the $2^m-1$ \rts
 $t=\pi [\D'_1, \dots, \D'_n]$ such that for all $i$, $D'_i = \wc^{\delta_i}\langle E'_i \rangle (A'_i)$ with 
$(E'_i,A'_i)$ a partition of $E_i \uplus A_i$, and there exists $i$ such that $(E'_i,A'_i) \neq (E_i,A_i)$.
\end{prop}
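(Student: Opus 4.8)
The plan is to obtain $\overline{t}$ by composing the two complementation results already in hand: Proposition~\ref{prop:complementPi}, which pushes a complement from a \rt down to the complements of the \rss sitting in its components, and Proposition~\ref{prop:complementRestriction}, which rewrites the complement of a \rs as a disjoint union of \rss. The only extra ingredient is the distributivity of the substitution operation over disjoint unions, namely $\pi[\D_1, \dots, \D_k \uplus \D'_k, \dots, \D_n] = \pi[\D_1, \dots, \D_k, \dots, \D_n] \uplus \pi[\D_1, \dots, \D'_k, \dots, \D_n]$, which itself follows from the uniqueness of the substitution decomposition (Theorem~\ref{thm:decomp_perm_AA05}).

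Concretely, writing $t = \pi[\D_1, \dots, \D_n]$ with $\D_i = \wc^{\delta_i}\langle E_i\rangle(A_i)$, I would first apply Equation~\eqref{eq:ComplementTerm} to express $\overline{t}$ as the disjoint union, over nonempty subsets $\X \subseteq \{1, \dots, n\}$, of the terms $\pi[\D'_1, \dots, \D'_n]$ with $\D'_i = \overline{\D_i}$ for $i \in \X$ and $\D'_i = \D_i$ otherwise. Next, for each index $i \in \X$ I would replace $\overline{\D_i}$ by the disjoint union of the $2^{|E_i|+|A_i|}-1$ \rss $\wc^{\delta_i}\langle E'_i\rangle(A'_i)$ provided by Equation~\eqref{eq:14}, where $(E'_i, A'_i)$ ranges over the partitions of $E_i \uplus A_i$ distinct from $(E_i, A_i)$; distributing the substitution over these disjoint unions turns each $\pi[\D'_1, \dots, \D'_n]$ into a disjoint union of \rts. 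Then I would re-index: a term of the resulting double union is specified by a nonempty $\X$ together with, for each $i \in \X$, a partition $(E'_i, A'_i) \neq (E_i, A_i)$ of $E_i \uplus A_i$; setting $(E'_i, A'_i) = (E_i, A_i)$ for $i \notin \X$ (so that $\wc^{\delta_i}\langle E'_i\rangle(A'_i) = \D_i$) identifies this data bijectively with a tuple $((E'_i, A'_i))_{i=1}^n$ of partitions of the $E_i \uplus A_i$ having at least one coordinate differing from $(E_i, A_i)$, and $\X$ is recovered as the set of differing coordinates. The number of such tuples is $\prod_{i=1}^n 2^{|E_i|+|A_i|} - 1 = 2^m - 1$, which yields the stated count, and each associated set $\pi[\wc^{\delta_1}\langle E'_1\rangle(A'_1), \dots, \wc^{\delta_n}\langle E'_n\rangle(A'_n)]$ is by definition a \rt.

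It remains to check disjointness of this single-indexed union, which I would deduce exactly as in the proofs of Propositions~\ref{prop:complementPi} and~\ref{prop:complementRestriction}: two distinct tuples of partitions differ in some coordinate $i$, where the corresponding \rss $\wc^{\delta_i}\langle E'_i\rangle(A'_i)$ and $\wc^{\delta_i}\langle E''_i\rangle(A''_i)$ are disjoint (a permutation in both would have to simultaneously avoid and contain some element of $E_i \uplus A_i$), and then the uniqueness of the substitution decomposition forces the two \rts to be disjoint as well. The argument is essentially bookkeeping once Propositions~\ref{prop:complementPi} and~\ref{prop:complementRestriction} are available; the one point I would write out most carefully is the re-indexing bijection above, so as to make sure the nested disjoint unions collapse to exactly the $2^m-1$ announced \rts, with neither repetition nor omission.
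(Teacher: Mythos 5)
Your proposal is correct and follows essentially the same route as the paper, which derives this proposition directly from Proposition~\ref{prop:complementPi} (Equation~\eqref{eq:ComplementTerm}) and Proposition~\ref{prop:complementRestriction} (Equation~\eqref{eq:14}) together with the distributivity of substitution over disjoint unions. Your explicit re-indexing bijection and the count $\prod_{i}2^{|E_i|+|A_i|}-1=2^m-1$ simply spell out the bookkeeping that the paper leaves implicit.
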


By distributivity of intersection over disjoint union, Equation~\eqref{eq:DisambiguateRoot} above can therefore be
rewritten as a disjoint union of intersection of \rts. Because \rts
are stable by intersection, the right-hand side of Equation~\eqref{eq:DisambiguateRoot}
is hereby written as a disjoint union of \rts. This leads to the following result:
\begin{prop}
Any union of \rts can be written as a disjoint union of \rts, and this can be done algorithmically using 
Equations~\eqref{eq:DisambiguateRoot}, \eqref{eq:ComplementTerm} and \eqref{eq:14}.
\end{prop}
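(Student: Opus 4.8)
The plan is to obtain the result by chaining together, in the right order, the three disambiguation identities established above, while carefully propagating disjointness at every step. Almost all of the substantive work has already been done in the preceding propositions, so this final statement is really an assembly argument.

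First I would reduce to the case of a union of \rts that all share the same root. Given an arbitrary (finite) union $\bigcup_j s_j$ of \rts, group the $s_j$ according to their root, which lies in $\{\oplus,\ominus\}\cup\s_\C$. By uniqueness of the substitution decomposition (Theorem~\ref{thm:decomp_perm_AA05}), two \rts with distinct roots are disjoint, so it suffices to rewrite each group as a disjoint union of \rts and then take the automatically disjoint union over roots. Fix such a group $t_1,\dots,t_k$ with common root $\pi$. Now apply Equation~\eqref{eq:DisambiguateRoot}, which writes $\bigcup_{i=1}^{k} t_i$ as the disjoint union, over the nonempty subsets $X\subseteq[1..k]$, of the terms $T_X := \bigcap_{i\in X} t_i \cap \bigcap_{i\in\overline{X}} \overline{t_i}$; this union is disjoint because two distinct subsets $X,X'$ disagree on some index $i$, forcing $t_i$ in one term and $\overline{t_i}$ in the other. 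It then remains to rewrite each $T_X$ as a disjoint union of \rts with root $\pi$. For this I would invoke the preceding proposition: each $\overline{t_i} = \pi[\wc,\dots,\wc]\setminus t_i$ is a disjoint union of \rts with root $\pi$, obtained by pushing the complement onto the components via Equation~\eqref{eq:ComplementTerm} and then expressing each complemented \rs as a disjoint union of \rss via Equation~\eqref{eq:14}. Distributing the intersection over these disjoint unions — which preserves disjointness, since distinct constituents of a single $\overline{t_i}$ are disjoint, hence so are the corresponding intersections — rewrites $T_X$ as a disjoint union of intersections of \rts, all with root $\pi$. Finally, \rts sharing a common root are stable under intersection, computed component-wise, using that \rss are stable under intersection via $\cP\langle E\rangle(A)\cap\cP\langle E'\rangle(A') = \cP\langle E\cup E'\rangle(A\cup A')$; so each of these intersections collapses to a single \rt. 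Assembling over $X$ and over the roots yields the required disjoint union, and since grouping by root, the three rewritings \eqref{eq:DisambiguateRoot}, \eqref{eq:ComplementTerm}, \eqref{eq:14}, and the component-wise intersections are all effective, the procedure is algorithmic; empty terms, if they appear, may simply be discarded.

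The only point that requires genuine care is the bookkeeping of disjointness: one must check that each of the three identities produces a truly disjoint union, and that iterating the distributive law $\big(\biguplus_j A_j\big)\cap B = \biguplus_j (A_j\cap B)$ over the several complemented factors $\overline{t_i}$ never creates overlaps — which again reduces to the disjointness of the constituents of each individual $\overline{t_i}$. Beyond this routine verification I do not expect any new obstacle, precisely because the hard parts (the explicit complement formulas and the component-wise intersection of restriction terms) are already available.
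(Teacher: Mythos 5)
Your proof is correct and follows essentially the same route as the paper: group terms by root (distinct roots being automatically disjoint by uniqueness of the substitution decomposition), apply Equation~\eqref{eq:DisambiguateRoot}, expand each complement $\overline{t_i}$ via Equations~\eqref{eq:ComplementTerm} and~\eqref{eq:14}, distribute intersection over the resulting disjoint unions, and collapse each intersection using the stability of restriction terms and restrictions under component-wise intersection. The extra care you take in verifying disjointness at each stage is exactly the bookkeeping the paper leaves implicit.
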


Altogether, for any equation of our system, we are able to rewrite it
unambiguously with disjoint unions of \rts, using the algorithm \textsc{Disambiguate}.

\subsection{\textsc{Specification}: an algorithm computing a combinatorial specification describing $\C$}
\label{ssec:proof_main_Th}

The procedures described above finally combine into 
\textsc{Specification} (Algo.~\ref{alg:final} p.\pageref{alg:final}), the main algorithm of this article, 
which computes a combinatorial specification for \C. Equations of the specification are computed iteratively, 
starting from the one for \C: this is achieved
using~\textsc{EqnForRestriction} (Algo.~\ref{alg:ComputeEquationForRestriction}) described
in Section~\ref{ssec:computeEqn}, which produces equations that may be ambiguous.
As we do not know how to decide whether an equation is ambiguous or not,
we apply \textsc{Disambiguate} (Algo.~\ref{alg:disambiguise}) to every equation produced.
Since some new right-only \rss may appear during this process, 
to obtain a complete system we compute iteratively
equations defining these new \rss using again \textsc{EqnForRestriction}.

The termination of~\textsc{Specification} is easy to prove.
Indeed, for all the \rss $\wc^{\delta}\langle E\rangle(A)$ that are considered in the inner while loop of \textsc{Specification}, every permutation in the sets $E$ and $A$ is a pattern of some element of the basis $B$ of $\C$. And since $B$ is finite, there is a finite number of such restrictions.
Consequently, the algorithm produces an unambiguous system (\emph{i.e.} a combinatorial specification)
which is the result of a finite number of iterations of computing equations followed by their disambiguation.

As for~\textsc{AmbiguousSystem} (Algo.~\ref{alg:sys-ambigu}), the complexity of~\textsc{Specification} depends on the number of equations produced, that we discuss in Section~\ref{ssec:bounds}.

\subsection{Size of the specification obtained}
\label{ssec:bounds}

The complexity of~\textsc{Specification} (Algo.~\ref{alg:final}) depends on the number of equations in the computed specification, which may be quite large.\footnote{
The same goes for the complexity of \textsc{AmbiguousSystem}.
The bounds given in Proposition~\ref{prop:upper}, Corollary~\ref{cor:upper} and Proposition~\ref{prop:lower}
also hold for the number of equations of the system output by \textsc{AmbiguousSystem}.}
We were not able to determine exactly how big it can be, and could only provide in Proposition~\ref{prop:upper} and Corollary~\ref{cor:upper} upper bounds on its size (\emph{i.e.}, number of equations) which seems to be (very) overestimated.  
We leave open the question of improving the upper bound on the size of the specification produced by our method.
However, we point out that such an upper bound cannot be less than an exponential (in the sum of the sizes of the excluded patterns).
Indeed, we give in Proposition~\ref{prop:lower} a generic example where our method produces such an exponential number of equations in the specification.
Nevertheless, this convoluted example was created on purpose, and in many cases the number of equations obtained is not so high.

\begin{prop}\label{prop:upper}
Let $\C = Av(B)$ and $\Bstar$ be the set of non-simple permutations of $B$. 
Let~$P^\star$ be the set of normalized blocks of permutations of $\Bstar$.
The number of equations in the specification of \C computed by the procedure~\textsc{Specification} is at most $3^{|P^\star|}$. 
\end{prop}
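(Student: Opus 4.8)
The plan is to deduce the bound from the structural description of the output of \textsc{Specification} together with a direct enumeration of the possible left-hand sides of its equations.

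First I would invoke Theorem~\ref{thm:systemeNonAmbigu}: the system produced by \textsc{Specification} is complete, and every equation has as left-hand side a restriction $\wc^{\delta}\langle E\rangle(A)$ with $\delta\in\{~,+,-\}$ and $E,A$ containing only normalized blocks of elements of $\Bstar$, that is, $E,A\subseteq P^\star$. Since \textsc{Specification} adds an equation for a restriction only when none is present yet, the number of equations of the output system equals the number of distinct restrictions occurring as left-hand sides; so it is enough to bound the number of admissible triples $(\delta,E,A)$.

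Next I would normalize using Remark~\ref{rem:conventions}, which lets us assume that every restriction appearing in the system satisfies $E\cap A=\emptyset$ and $1\notin E\cup A$ (a restriction violating one of these is either empty or already equal to one satisfying them, so it does not create an additional equation). I would then record that $1\in P^\star$: because \textsc{Specification} is applied to a class $\C$ that is not substitution-closed, Remark~\ref{rem:characterization_substitution_closed} gives $\Bstar\neq\emptyset$, and the normalized block of a singleton interval of any element of $\Bstar$ is precisely the permutation $1$. Hence $E$ and $A$ range over disjoint subsets of $P^\star\setminus\{1\}$, a set with $|P^\star|-1$ elements.

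The count is then immediate: there are $3$ possibilities for $\delta$, and the number of ordered pairs $(E,A)$ of disjoint subsets of a fixed set of cardinality $|P^\star|-1$ is $3^{|P^\star|-1}$, since each element goes into $E$, into $A$, or into neither; this gives a total of $3\cdot 3^{|P^\star|-1}=3^{|P^\star|}$ possible left-hand sides, hence at most $3^{|P^\star|}$ equations. The only mildly delicate point is the normalization step: one must check that restricting to restrictions with $E\cap A=\emptyset$ and $1\notin E\cup A$ discards no equation, and that $1$ genuinely lies in $P^\star$ — it is the latter fact that lowers the naive bound $3^{|P^\star|+1}$ to the claimed $3^{|P^\star|}$. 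Everything else is elementary combinatorics. The same reasoning applies to \textsc{AmbiguousSystem}, whose left-hand sides by Theorem~\ref{thm:systemeAmbigu} have the even more restricted form $\wc^{\delta}\langle E\rangle$, so the same bound holds a fortiori for it as well.
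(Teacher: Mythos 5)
Your proof is correct and follows essentially the same route as the paper's: bound the number of equations by the number of triples $(\delta,E,A)$ with $E,A$ disjoint subsets of $P^\star\setminus\{1\}$, giving $3\cdot 3^{|P^\star|-1}=3^{|P^\star|}$. Your explicit verification that $1\in P^\star$ (needed so that $|P^\star\setminus\{1\}|=|P^\star|-1$) is a point the paper leaves implicit, but otherwise the arguments coincide.
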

\begin{proof}
In the specification we obtain, every equation is of the form
\[
\wc^{\delta}\langle E \rangle(A) = \dots \text{, where } \delta \in \{~, +, -\}, E \cup A \subseteq P^\star \text{.}
\]
As explained in Remark~\ref{rem:conventions} (p.\pageref{rem:conventions}), we can further assume that $E \cup A \subseteq P^\star \setminus \{1\}$ 
and that $E \cap A = \emptyset$. 
The number of equations is then bounded by the number of such triplets $(\delta, E, A)$ which is $3^{1+(|P^\star|-1)}$.
\end{proof}

Using the previous proposition and the fact that the number of blocks of a permutation of size $k$ is less than $k^2$, we have the following consequence:
\begin{cor}
\label{cor:upper}
Let $\C = Av(B)$ and $t = \sum_{\pi \in B} |\pi|$.
The number of equations in the specification of \C computed by~\textsc{Specification} is at most $3^{t^2}$.
\end{cor}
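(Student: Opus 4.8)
The plan is to obtain the bound as an immediate arithmetic consequence of Proposition~\ref{prop:upper}, which already reduces the problem to estimating the size of $P^\star$, the set of normalized blocks of permutations of $\Bstar$. Since $\Bstar \subseteq B$, it suffices to bound $|P^\star|$ in terms of $t = \sum_{\pi \in B}|\pi|$. The first step I would carry out is a count of the normalized blocks contributed by a single permutation. Recall that a normalized block of a permutation $\pi$ is $\pi_I$ for $I$ an interval of $\pi$, that is, a set $\{i,\dots,j\}$ of consecutive indices. For $\pi$ of size $k$ there are exactly $\binom{k+1}{2} = \tfrac{k(k+1)}{2}$ such index sets, and $\tfrac{k(k+1)}{2} \le k^2$ for all $k \ge 1$. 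Hence each $\pi \in \Bstar$ contributes at most $|\pi|^2$ distinct permutations to $P^\star$.

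Next I would sum over the basis: $|P^\star| \le \sum_{\pi \in \Bstar} |\pi|^2 \le \sum_{\pi \in B} |\pi|^2$. The final ingredient is the elementary inequality $\sum_{\pi \in B} |\pi|^2 \le \bigl(\sum_{\pi \in B} |\pi|\bigr)^2 = t^2$, which holds because expanding the square on the right produces the sum of the squares plus the cross terms $\sum_{\pi \ne \pi'} |\pi|\,|\pi'|$, all of which are nonnegative. Combining these inequalities gives $|P^\star| \le t^2$, and plugging this into the bound $3^{|P^\star|}$ of Proposition~\ref{prop:upper} yields that the specification computed by \textsc{Specification} has at most $3^{t^2}$ equations.

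The statement is purely arithmetic once Proposition~\ref{prop:upper} is granted, so there is no genuine obstacle to overcome; the only place warranting an explicit word is the step $\sum |\pi|^2 \le (\sum |\pi|)^2$, which is trivial but should be stated since it is exactly what turns the linear parameter $|P^\star| \le t$ -- which is \emph{false} -- into the correct quadratic parameter $|P^\star| \le t^2$. If one wanted a slightly sharper (still exponential) bound, one could instead keep $\sum_{\pi \in \Bstar}\binom{|\pi|+1}{2}$ without further coarsening, but the clean form $3^{t^2}$ is what the corollary asserts and what I would prove.
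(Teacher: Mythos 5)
Your proof is correct and follows essentially the same route as the paper: bound the number of normalized blocks of each $\pi\in\Bstar$ by $|\pi|^2$ (via the count of intervals), deduce $|P^\star|\le\sum_{\pi\in B}|\pi|^2\le t^2$, and plug this into the bound $3^{|P^\star|}$ of Proposition~\ref{prop:upper}. The paper's own argument is exactly this, stated in one line.
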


However, \textsc{Specification} is designed to compute only the equations we need,  and the number of equations produced is in practice much smaller. 
See for instance the example of Section~\ref{sec:ex}, where $\Bstar = \{1243, 2341\}$ and $P^\star = \{1, 12, 21, 123, 132, 1243, 2341\}$: 
the upper bound of Proposition~\ref{prop:upper} is $3^7 = 2187$, but only $16$ equations are effectively computed. 
But as shown by the following proposition, 
\textsc{Specification} produces in the worst case a specification with 
a number of equations that is exponential in $t$. 

\begin{prop}\label{prop:lower}
For each $n \geq 4$, there exists a class $\C_n = Av(B_n)$ whose specification computed by the procedure~\textsc{Specification} (Algo.~\ref{alg:final}) has at least $2^{s}$ equations, 
where the sum $t$ of the sizes of the elements of $B_n$ is approximately $s$, in the sense that $ 1 < t/s \ll s$.
\end{prop}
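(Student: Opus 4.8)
The plan is to exhibit an explicit family of bases $B_n$ for which the recursive propagation of avoidance and containment constraints in \textsc{Specification} is forced to distinguish exponentially many restrictions. First I would design $B_n$ so that it contains a single non-simple excluded pattern $\beta_n$ (plus whatever simple permutations are needed to pin down $\s_{\C_n}$, e.g.\ a small fixed set of simple permutations, or none at all so that $\wc = \C_n$ would be trivial — so in fact we need at least one simple permutation in $\C_n$ to have an interesting substitution structure). The key requirement on $\beta_n$ is that it has many distinct normalized blocks, all pairwise incomparable for the pattern order, arranged so that the embeddings of $\beta_n$ (and of its blocks) into the simple permutation(s) of $\C_n$ produce, after propagation, restrictions $\wc^{\delta}\langle E\rangle(A)$ indexed by essentially arbitrary subsets of a linearly-sized set of blocks. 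A natural candidate is to take $\beta_n$ built by substitution from a fixed simple permutation $\sigma$ (say $\sigma = 2413$ or $3142$) into which we inflate $s \approx n$ increasing runs of distinct lengths, e.g.\ $\beta_n = \sigma[\,1,\,12,\,123,\,\dots\,]$ suitably arranged so that the blocks $1, 12, 123, \dots$ are the normalized blocks and they are all pairwise incomparable with appropriate other blocks; one must check that such a $\beta_n$ is genuinely non-simple and that its intervals realize all the desired blocks.

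The main work is then to track the execution of \textsc{Specification} on this input and show that the inner \texttt{while} loop genuinely generates $\geq 2^s$ distinct restrictions (hence $\geq 2^s$ equations, since by Theorem~\ref{thm:systemeNonAmbigu} every restriction that appears is the left part of exactly one equation). Concretely, I would argue: Step~2 (propagation of avoidance constraints, Proposition~\ref{prop:restrictions=blocs}) applied to the term $\sigma[\wc,\dots,\wc]\langle \beta_n\rangle$ produces terms whose components carry sets $E_{i,\mathbf k}$ built from the blocks of $\beta_n$; because $\sigma$ is simple and the blocks of $\beta_n$ are arranged along $\sigma$ in distinct positions, the tuples $\mathbf k \in K^\sigma_{\beta_n}$ can be chosen to place each of the $s$ proper blocks into a position of our choosing, so that the collection of sets $\{E_{i,\mathbf k}\}$ that can occur as a single component's constraint ranges over (at least) all singletons, and hence iterating the propagation — or, more directly, the disambiguation Step~4 via Equations~\eqref{eq:DisambiguateRoot}, \eqref{eq:ComplementTerm}, \eqref{eq:14} — forces the appearance of restrictions $\wc\langle E\rangle(A)$ for all $2^s$ partitions $(E,A)$ of these $s$ blocks into an avoided part and a contained part. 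The cleanest line is probably to use the complement/disambiguation step: once two ambiguous restriction terms appear whose components differ in $s$ independent blockwise constraints, Equation~\eqref{eq:14} manufactures all $2^{|E|+|A|}$ restrictions, and one checks none of them collapse (they are pairwise distinct because, by the incomparability of the blocks, no two of the partitions give the same set of permutations — here one uses that distinct incomparable blocks yield distinct restrictions $\wc\langle\cdot\rangle(\cdot)$, which follows since each block $1^k$ is realized by an explicit permutation distinguishing it from the others).

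Finally I would do the bookkeeping on sizes: if $\beta_n = \sigma[\,1^{1}, 1^{2}, \dots, 1^{|\sigma|}\,]$-style with $s$ nontrivial run-blocks of lengths $1,\dots,s$, then $|\beta_n| = \Theta(s^2)$, so writing $t = \sum_{\pi\in B_n}|\pi|$ we get $t = \Theta(s^2)$ while the number of equations is $\geq 2^s$; this gives $2^s$ equations with $t$ polynomially (here quadratically) related to $s$, i.e.\ $1 < t/s \ll s$ exactly as claimed — in fact one gets a bound of the form $2^{\Omega(\sqrt t)}$. If one wants $t/s$ as close to $1$ as possible one can instead use $s$ blocks all of bounded size but made pairwise incomparable by a fixed-size gadget (so $|\beta_n| = \Theta(s)$), which would give $t = \Theta(s)$ and $2^{\Omega(t)}$ equations; I would state the proposition with whichever construction makes the ratio condition cleanest, and remark that the example is artificial.

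\textbf{Expected main obstacle.} The delicate point is \emph{not} producing many restriction terms — propagation and disambiguation obviously create exponentially many syntactic terms — but proving that the restrictions $\wc^{\delta}\langle E\rangle(A)$ labelling the equations are pairwise \emph{genuinely distinct as sets of permutations}, so that the algorithm, which by Theorem~\ref{thm:systemeNonAmbigu} creates one equation per restriction, really outputs $\geq 2^s$ equations and not fewer after identifications. This requires choosing the blocks of $\beta_n$ so carefully that every one of the $2^s$ partitions $(E,A)$ yields a set $\wc\langle E\rangle(A)$ not equal to any other, which in turn forces the blocks to be pairwise incomparable \emph{and} to avoid ``accidental'' coincidences of the form $\wc\langle E\rangle(A) = \wc\langle E'\rangle(A')$; verifying this for an explicit construction, together with checking that each such restriction actually \emph{gets visited} by the reachability of the inner \texttt{while} loop (not just that it is consistent), is the technical heart of the argument.
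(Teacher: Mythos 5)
Your high-level strategy is the same as the paper's: take a basis with a single non-simple element $\gamma$ whose proper normalized blocks form a large antichain, and use the embeddings of $\gamma$ into a simple permutation of the class to realize every subset $E$ of those blocks as an avoidance set $E_{i,\mathbf{k}}$ in Proposition~\ref{prop:restrictions=blocs}, forcing $2^{s}$ distinct restrictions and hence $2^{s}$ equations. You also correctly isolate the real difficulty (pairwise distinctness of the resulting restrictions). However, both concrete constructions you offer fail on exactly that point, so the proof does not go through as written.

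First, the blocks $1, 12, 123, \dots$ are pairwise \emph{comparable}: they form a chain under $\preccurlyeq$, so by the simplification of Proposition~\ref{prop:simplification_restriction} every restriction $\wc^{\delta}\langle E\rangle$ with $E$ a subset of these blocks collapses to $\wc^{\delta}\langle \min E\rangle$, leaving at most $s+1$ distinct restrictions instead of $2^{s}$ — this contradicts the antichain requirement you yourself state. Second, your proposed fix, ``$s$ pairwise incomparable blocks all of bounded size,'' is impossible for $s\to\infty$, since there are only finitely many permutations of any bounded size. Third, you do not ensure that $\C_n$ contains finitely many simple permutations, which is a hypothesis of the entire framework; ``a small fixed set of simple permutations'' in the basis will generically leave infinitely many simples in the class. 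The paper resolves all three issues at once: it takes the blocks to be \emph{all} $s=s_{2n-1}$ simple permutations of size $2n-1$ (automatically an antichain, of super-exponential cardinality in $n$ while each element has size only $2n-1$), sets $\gamma=\oplus[\tau_1,\dots,\tau_s]$, and puts all of $\s_{2n}$ into the basis to force $\s_{\C_n}$ finite (via Schmerl--Trotter). This yields $t=2n\,s_{2n}+(2n-1)\,s_{2n-1}$, so $t/s\sim 4n^2$, which is polylogarithmic in $s$ and hence $\ll s$. Your own bookkeeping ($|\beta_n|=\Theta(s^2)$, so $t=\Theta(s^2)$ and $t/s=\Theta(s)$) does not satisfy the stated condition $t/s\ll s$. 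Finally, note that the paper obtains the $2^{s}$ restrictions purely from the avoidance-propagation step (\textsc{AddConstraints}), with no need for the disambiguation/complement route you suggest as ``the cleanest line''; distinctness of the $2^s$ sets $\wc\langle E\cup\{\gamma\}\rangle$ is then immediate because each $\tau_i$ lies in $\wc\langle E\cup\{\gamma\}\rangle$ if and only if $\tau_i\notin E$.
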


\begin{proof}
For any $n\geq 4$, denote by $\s_n$ the set of simple permutations of size $n$ and by $s_n$ its cardinality. 
Remember from~\cite{AAK03} that $s_n \sim \frac{n!}{e^2}$. 
Fix some $n \geq 4$, let $s=s_{2n-1}$, and define $\gamma = \oplus[\tau_{1}, \dots, \tau_{s}]$ with $\{ \tau_{i} : 1 \leq i \leq s\}=\s_{2n-1}$ 
and $B_n = \s_{2n} \cup \{\gamma \}$. 
Note that $B_n$ is an antichain, and consider the class $\C_n = Av(B_n)$. 
The sum of the sizes of the elements of $B_n$ is $t = 2n\cdot s_{2n} + (2n-1)\cdot s_{2n-1}$. 
Thus $s < t$ and $t/s \sim 4n^2$. 
Using Stirling formula, $t$ and $s$ are both of order $(\frac{2n}{e})^{2n+\text{ constant}}$.

It is not hard to see that $\C_n$ contains a finite number of simple permutations. 
Indeed, it follows from \cite{SchmerlTrotter1993} (see details in~\cite{PR11} for instance) 
that $\C_n$ contains no simple permutations of size $2n$ or more. 
Note also that the simple permutation $\pi = 246135$ is small enough that it belongs to $\C_n$. 
Moreover, $B_n^\star = \{\gamma\}$ and from Proposition~\ref{prop:system}, $\C_n = \hat{\C_n} \langle \gamma \rangle$. 

We claim that the computations performed by \textsc{AddConstraints}$(\pi[\wc_n, \ldots, \wc_n] , \gamma)$ 
in~\textsc{EqnForRestriction}$(B_n)$ (Algo.~\ref{alg:ComputeEquationForRestriction} p.\pageref{alg:ComputeEquationForRestriction}) 
will create at least $2^{s}$ right-only terms, 
thus giving rise to at least $2^{s}$ additional equations in the specification of $\C_n$. 
More precisely, with notation from Proposition~\ref{prop:restrictions=blocs} (p.\pageref{prop:restrictions=blocs}),
we prove that that for each subset $E$ of $\s_{2n-1}$, 
there exists a tuple $(k_1, \ldots, k_{\ell}) \in K^\pi_{\gamma}$ such that 
$E_{2,k_1, \ldots k_{\ell}} = E \cup \{\gamma\}$,
ensuring that $\hat{\C_n} \langle E \cup \{\gamma\} \rangle$ appear in the system of equations.

Let us start by classifying the embeddings of $\gamma$ in $\pi$ into three categories. 
For all $i \in [1..s]$, let us denote by $\alpha_{i}$ the embedding of $\gamma$ in $\pi$ that sends $\tau_{i}$ to $\pi_2$,
$\oplus[\tau_{1}, \dots, \tau_{i-1}]$ to $\pi_1$ and $\oplus[\tau_{i+1}, \dots, \tau_{s}]$ to $\pi_3$; 
and let $\alpha_{s+1}$ be the embedding of $\gamma$ in $\pi$ that sends $\gamma$ to $\pi_2$. 
The remaining embeddings of $\gamma$ in $\pi$ are denoted $\{\alpha_{i}\ |\ i \in [s\!+\!2\,..\,\ell] \}$.

Note that for any $i \neq s+1$, there exists some $k_{i} \in [1..6]$, with $k_{i} \neq 2$, 
such that $\gamma_{\alpha_{i}(k_{i})} \neq \varepsilon$ and $\gamma_{\alpha_{i}(k_{i})} \neq 1$ 
(since for each $j$, $\tau_j$ is simple and $|\pi| < |\tau_j|$, thus $\tau_j$ has to be entirely embedded in some $\pi_k$). 
This remark allows to consider, for each subset $E$ of $\s_{2n-1}$, a tuple $(k_1, \ldots k_{\ell})$ defined as follows. 
For $i \in [1..s]$, we set $k_i =2$ if $\tau_i \in E$, and otherwise we choose $k_i \neq 2$ such that 
$\gamma_{\alpha_{i}(k_{i})} \neq \varepsilon$ and $\gamma_{\alpha_{i}(k_{i})} \neq 1$. 
We set $k_{s+1} =2$. 
For $ i \in [s\!+\!2\,..\,\ell]$, we choose $k_i \neq 2$ such that 
$\gamma_{\alpha_{i}(k_{i})} \neq \varepsilon$ and $\gamma_{\alpha_{i}(k_{i})} \neq 1$. 
Consequently, the following properties hold: \\
\hspace*{0.8cm} - $(k_1, \ldots k_{\ell}) \in K^\pi_{\gamma}$ (defined in Equation~\eqref{eq:propagate} p.\pageref{eq:propagate}); \\
\hspace*{0.8cm} - $k_i = 2$ if and only if $\tau_i \in E$ or $i=s+1$; \\
\hspace*{0.8cm} - for $\tau_i \in E$, $\gamma_{\alpha_i(k_i)} = \tau_i$; \\
\hspace*{0.8cm} - and for $i=s+1$, $\gamma_{\alpha_i(k_i)} = \gamma$. \\
This ensures that $E_{2,k_1, \ldots k_{\ell}} = E \cup \{\gamma\}$ as claimed.
\end{proof}

\subsection{Simplifications on the fly}
\label{ssec:simplifications}

During the computation of the equations by~\textsc{Specification} (Algo.~\ref{alg:final}), 
many restriction terms appear, that may be redundant or a bit more intricate than necessary.
For instance, in the equations obtained when pushing down the constraints in the subtrees using the rewriting described in 
Propositions~\ref{prop:restrictions=blocs} (p.\pageref{prop:restrictions=blocs}) and~\ref{prop:containment=blocs} (p.\pageref{prop:containment=blocs}), 
some element of a given union may not be useful because it may be included in some other element of the union.
We simplify these unions by deleting useless elements, using Proposition~\ref{prop:simplification_union} below.
Moreover, 
when a restriction $\D$ of the form $\wc^{\delta} \langle E \rangle (A)$ arise, 
it can often be written as $\D = \wc^{\delta} \langle E' \rangle (A')$ 
with $E'$ (resp.\ $A'$) having fewer elements than $E$ (resp.\ $A$).
We use the description having as few elements as possible, thanks to Proposition~\ref{prop:simplification_restriction}.
Proposition~\ref{prop:simplification_trivial2} further allows some trivial simplifications. 

\medskip

For any sets $E$ and $A$ of patterns, 
$\min E$ (resp.\ $\max A$) denote the subset of $E$ (resp.\ $A$) 
containing all minimal (resp.\ maximal) elements of $E$ (resp.\ $A$) for the pattern order $\preccurlyeq$. 

\begin{prop}
In any equation, every restriction $\wc^{\delta}\langle E\rangle(A)$ may be replaced by \\
$\wc^{\delta} \langle \min E\rangle (\max A)$ 
without modifying the set of permutations described. 
\label{prop:simplification_restriction} 
\end{prop}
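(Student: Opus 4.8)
The statement to prove is Proposition~\ref{prop:simplification_restriction}: that in any equation, the restriction $\wc^{\delta}\langle E\rangle(A)$ may be replaced by $\wc^{\delta}\langle \min E\rangle(\max A)$. The plan is to show the stronger set-theoretic equality $\wc^{\delta}\langle E\rangle(A) = \wc^{\delta}\langle \min E\rangle(\max A)$ as subsets of $\wc^{\delta}$; once this is established, substituting one notation for the other in any equation obviously leaves the family of permutations described unchanged.

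The key steps are as follows. First I would unfold the definition (Definition~\ref{dfn:restriction}): a permutation $\sigma \in \wc^{\delta}$ lies in $\wc^{\delta}\langle E\rangle(A)$ iff $\sigma$ avoids every $\pi \in E$ and contains every $\pi \in A$. The core observation is a monotonicity property of the pattern order $\preccurlyeq$: if $\rho \preccurlyeq \pi$ and $\sigma$ avoids $\rho$, then $\sigma$ avoids $\pi$ (contrapositive: containing $\pi$ forces containing $\rho$); dually, if $\rho \preccurlyeq \pi$ and $\sigma$ contains $\pi$, then $\sigma$ contains $\rho$. For the avoidance side: since $\min E \subseteq E$, avoiding all of $E$ trivially implies avoiding all of $\min E$; conversely, if $\sigma$ avoids all of $\min E$, then for any $\pi \in E$ there is some $\rho \in \min E$ with $\rho \preccurlyeq \pi$ (because $\preccurlyeq$ restricted to the finite — or even just well-founded — set $E$ has minimal elements below every element), and avoiding $\rho$ forces avoiding $\pi$. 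Hence the avoidance condition ``avoid $E$'' is equivalent to ``avoid $\min E$''. For the containment side: since $\max A \subseteq A$, containing all of $A$ implies containing all of $\max A$; conversely, if $\sigma$ contains all of $\max A$, then for any $\pi \in A$ there is some $\rho \in \max A$ with $\pi \preccurlyeq \rho$, and containing $\rho$ forces containing $\pi$. Hence ``contain $A$'' is equivalent to ``contain $\max A$''. Combining the two equivalences gives the desired set equality.

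One should note that in this paper the relevant sets $E$ and $A$ are always finite (they consist of normalized blocks of elements of $\Bstar$, and $B$ is finite, per Theorem~\ref{thm:systemeNonAmbigu}), so $\min E$ and $\max A$ are well-defined and every element of $E$ lies above some element of $\min E$ and every element of $A$ lies below some element of $\max A$; but in fact only the absence of infinite descending (resp.\ ascending) chains is needed, which holds for $\preccurlyeq$ on permutations of bounded size. This is the only point requiring a moment's care, but it is not really an obstacle.

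Honestly, I do not expect a main obstacle here: the proposition is a straightforward consequence of the definition of a restriction together with the transitivity and monotonicity of the pattern containment order. The only thing to be slightly careful about is phrasing the equivalences so that each implication is stated cleanly, and remarking (as above) that $\min$ and $\max$ behave as expected because we are in a finite or well-founded setting. The proof is short enough that writing ``$\sigma$ avoids every pattern of $E$ iff it avoids every pattern of $\min E$, and contains every pattern of $A$ iff it contains every pattern of $\max A$; in both cases this is immediate from the fact that $\preccurlyeq$ is a partial order and that avoidance (resp.\ containment) propagates downward (resp.\ upward) along $\preccurlyeq$'' essentially constitutes the whole argument.
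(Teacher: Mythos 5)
Your proposal is correct and matches the paper's proof: the paper likewise reduces the claim to the set identity $\wc^{\delta}\langle E\rangle(A) = \wc^{\delta}\langle \min E\rangle(\max A)$ and derives it from the same two monotonicity facts (avoidance propagates upward and containment propagates downward along $\preccurlyeq$). Your extra remark on finiteness/well-foundedness making $\min E$ and $\max A$ cofinal is a harmless elaboration the paper leaves implicit.
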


\begin{proof}
The proposition is an immediate consequence of $\wc^{\delta}\langle E\rangle(A) = \wc^{\delta}\langle \min E\rangle(\max A)$. 
This identity follows easily from two simple facts: 
if a permutation $\sigma$ avoids $\pi$ then $\sigma$ avoids all patterns containing $\pi$; 
and if $\sigma$ contains $\pi$ then $\sigma$ contains all patterns contained in $\pi$. 
\end{proof}

\begin{prop}
The restriction term $\pi[\C_1, \ldots, \C_n]$ is the empty set if and only if 
there exists $i$ such that $\C_i = \emptyset$. 
In this case, $\pi[\C_1, \ldots, \C_n]$ may be removed from any union of restriction terms 
without modifying the set of permutations described by this union. 
\label{prop:simplification_trivial2}
\end{prop}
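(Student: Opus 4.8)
The statement to prove is Proposition~\ref{prop:simplification_trivial2}: a restriction term $\pi[\C_1,\ldots,\C_n]$ is empty if and only if some $\C_i$ is empty, and in that case it may be deleted from any union without changing what the union describes. Here is how I would proceed.

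\textbf{Proof plan.} The proof rests entirely on the fact that $\pi[\sigma_1,\ldots,\sigma_n]$ is a well-defined permutation as soon as \emph{all} of $\sigma_1,\ldots,\sigma_n$ are (nonempty) permutations; this is the classical substitution operation recalled in Section~\ref{sec:PermClasses}. Recall also our convention (stated just after the definition of permutation classes) that every restriction $\wc^{\delta}\langle E\rangle(A)$, being a set of permutations of size $\geq 1$, never contains $\varepsilon$; in particular each $\C_i$ appearing in a restriction term consists only of nonempty permutations.

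First I would prove the implication ``some $\C_i=\emptyset$ $\Rightarrow$ $\pi[\C_1,\ldots,\C_n]=\emptyset$''. This is immediate: by definition $\pi[\C_1,\ldots,\C_n]=\{\pi[\sigma_1,\ldots,\sigma_n]\mid \sigma_j\in\C_j \text{ for all } j\}$, so if $\C_i=\emptyset$ there is no admissible choice of $\sigma_i$, hence the set is empty. (For $\pi=\oplus$ or $\ominus$ the first component ranges over $\C_1^+$ or $\C_1^-$, but the same argument applies verbatim, since if $\C_1=\emptyset$ then $\C_1^\pm=\emptyset$.)

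For the converse, ``all $\C_i\neq\emptyset$ $\Rightarrow$ $\pi[\C_1,\ldots,\C_n]\neq\emptyset$'', I would pick, for each $i$, some $\sigma_i\in\C_i$; these exist by hypothesis and are nonempty permutations by the convention above. Then $\pi[\sigma_1,\ldots,\sigma_n]$ is a genuine permutation lying in $\pi[\C_1,\ldots,\C_n]$, so the set is nonempty. For the linear roots, one uses that $\C_1^+$ (resp.\ $\C_1^-$) is nonempty whenever $\C_1$ is --- indeed every nonempty permutation either is $\oplus$-indecomposable or can be written $\oplus[\tau^1,\tau^2]$ with $\tau^1$ $\oplus$-indecomposable (Theorem~\ref{thm:decomp_perm_AA05}), so $\C_1$ downward closedness is not even needed; one just needs one element and its leftmost indecomposable ``piece'', which still lies in $\wc^\delta$ and satisfies the same restriction since restrictions in the system are closed under taking the relevant indecomposable part (as noted for $(\cA\langle E\rangle)^+=\cA^+\langle E\rangle$ right after Proposition~\ref{prop:system}). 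Actually, more simply: in the statement each $\C_i$ is a restriction of the form $\wc^{\delta_i}\langle E_i\rangle(A_i)$, and for a linear root the first component is already written with the appropriate $\delta_1\in\{+,-\}$, so a nonempty $\C_1$ already provides the needed witness.

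Finally, the ``in this case'' clause: if $\pi[\C_1,\ldots,\C_n]=\emptyset$, then for any union $U = \bigcup_j t_j$ of restriction terms with $t_{j_0}=\pi[\C_1,\ldots,\C_n]$, we have $U = \bigcup_{j\neq j_0} t_j$ since adding the empty set to a union changes nothing. This is a triviality about set union and needs no further argument.

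\textbf{Main obstacle.} There is essentially no obstacle: the only point requiring a moment's care is the behaviour of the linear roots $\oplus,\ominus$, where the first slot ranges over an ``indecomposable'' subset rather than the full restriction. I would handle this by invoking the identity $(\cA\langle E\rangle)^{\pm}=\cA^{\pm}\langle E\rangle$ together with the fact that $\wc^{+}$ and $\wc^{-}$ are nonempty (they contain the permutation $1$), so that a nonempty restriction $\wc^{\delta}\langle E\rangle(A)$ in the first slot is automatically nonempty of the required type, given the conventions of Remark~\ref{rem:conventions}. Everything else is a direct unwinding of the definition of the substitution operation.
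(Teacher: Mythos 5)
Your proof is correct and coincides with the argument the paper leaves implicit (Proposition~\ref{prop:simplification_trivial2} is stated without proof): a substitution $\pi[\sigma_1,\ldots,\sigma_n]$ exists exactly when every slot can be filled, and the convention $\varepsilon\notin\C_i$ guarantees that any such choice yields a genuine permutation, so the term is nonempty iff all the $\C_i$ are. One caveat on your digression: the intermediate claim that $\C_1^{+}$ is nonempty whenever $\C_1$ is fails for restrictions with containment constraints (e.g.\ $\wc\langle 21\rangle(12)$ consists of the increasing permutations of size at least $2$, all of which are $\oplus$-decomposable, so its $\oplus$-indecomposable part is empty), and the justification via closure under taking the indecomposable part is exactly what breaks when $A\neq\emptyset$; fortunately your final reading --- that for a linear root the first slot already \emph{is} the set $\C_1=\wc^{\delta_1}\langle E_1\rangle(A_1)$ with $\delta_1\in\{+,-\}$, so the proposition's hypothesis applies to it directly --- is the correct one and renders that claim unnecessary.
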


\begin{prop}
Consider a union of restriction terms containing two terms with the same root 
$\pi[\C_1, \ldots, \C_n]$ and $\pi[\D_1, $ $ \ldots, \D_n]$. 
If for all $i$, $\C_i \subseteq \D_i$, then we may remove $\pi[\C_1, \ldots, \C_n]$ from this union 
without modifying the set of permutations it describes. 
\label{prop:simplification_union} 
\end{prop}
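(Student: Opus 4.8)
The plan is to reduce the statement to the trivial set-theoretic fact that a member of a union which is contained in another member may be deleted without changing the union. Concretely, write the union as $U = t_1 \cup t_2 \cup R$, where $t_1 = \pi[\C_1,\ldots,\C_n]$, $t_2 = \pi[\D_1,\ldots,\D_n]$, and $R$ is the union of the remaining restriction terms. Once we know $t_1 \subseteq t_2$, we have $t_1 \cup t_2 = t_2$, hence $U = t_2 \cup R$, which is exactly $U$ with the term $t_1$ removed. So the whole proof amounts to establishing the inclusion $t_1 \subseteq t_2$ from the hypothesis $\C_i \subseteq \D_i$ for all $i$.

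For that inclusion I would simply unwind the definition of a restriction term (recalled after Theorem~\ref{thm:decomp_perm_AA05}, and Definition~\ref{def:block}). If the root is a simple permutation $\pi$: take $\sigma \in \pi[\C_1,\ldots,\C_n]$, so $\sigma = \pi[\sigma^{(1)},\ldots,\sigma^{(n)}]$ with each $\sigma^{(i)} \in \C_i$; since $\C_i \subseteq \D_i$ we get $\sigma^{(i)} \in \D_i$ for all $i$, hence $\sigma \in \pi[\D_1,\ldots,\D_n]$. For a linear root the restriction term is $\oplus[\C_1^+,\C_2]$ (resp.\ $\ominus[\C_1^-,\C_2]$), so there is one extra bookkeeping step: one first notes that $\C_1 \subseteq \D_1$ implies $\C_1^+ \subseteq \D_1^+$ (a permutation of $\C_1$ that is $\oplus$-indecomposable also lies in $\D_1$ and remains $\oplus$-indecomposable), and likewise $\C_1^- \subseteq \D_1^-$; with this remark the same argument as for simple roots gives $\oplus[\C_1^+,\C_2] \subseteq \oplus[\D_1^+,\D_2]$ and $\ominus[\C_1^-,\C_2] \subseteq \ominus[\D_1^-,\D_2]$.

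I do not expect a real obstacle here: the only point deserving a sentence of care is the linear-root case, where one must check that passing to the $\oplus$- (resp.\ $\ominus$-) indecomposable subset preserves inclusions, which it clearly does. Everything else is a direct application of the definition of the substitution operation together with the elementary observation on unions. I would therefore present the proof in two short steps — first the inclusion $t_1 \subseteq t_2$ (three cases according to the root, the simple case verbatim and the two linear cases with the added remark on $\C_1^\pm$), then the one-line conclusion $t_1 \cup t_2 = t_2$ — and keep it deliberately brief, in the same spirit as the proofs of Propositions~\ref{prop:simplification_restriction} and~\ref{prop:simplification_trivial2}.
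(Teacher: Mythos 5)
Your proof is correct and follows exactly the paper's argument: the paper's own proof is the one-liner ``if for all $i$, $\C_i \subseteq \D_i$, then $\pi[\C_1, \ldots, \C_n] \subseteq \pi[\D_1, \ldots, \D_n]$, giving the result immediately.'' Your additional care with the linear-root cases ($\C_1^\pm \subseteq \D_1^\pm$) is a harmless elaboration of the same idea.
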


\begin{proof}
If for all $i$, $\C_i \subseteq \D_i$, then $\pi[\C_1, \ldots, \C_n] \subseteq \pi[\D_1, \ldots, \D_n]$, 
giving the result immediately. 
\end{proof}

Performing the simplifications of Proposition~\ref{prop:simplification_restriction} requires 
that we can compute $\min E$ and $\max A$ effectively. 
This can be done naively 
by simply checking all pattern relations between all pairs of elements of $E$ (resp.\ $A$). 
Similarly, to perform all simplifications induced by Propositions~\ref{prop:simplification_trivial2} and \ref{prop:simplification_union}, 
we would need to be able to decide whether $\C_i$ is empty or $\C_i \subseteq \D_i$, for any restrictions $\C_i$ and $\D_i$. 
Lemmas~\ref{lem:simplification_trivial1} and \ref{lem:simplification} below give sufficient conditions for the emptiness and 
the inclusion of a restriction into another, hence allowing to perform some simplifications. 

\begin{lem}
A restriction $\wc^{\delta}\langle E\rangle(A)$ is the empty set as soon as 
there exist $\pi \in E$ and $\sigma \in A$ such that $\pi \preccurlyeq \sigma$. 
\label{lem:simplification_trivial1}
\end{lem}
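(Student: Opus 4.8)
The statement to prove is Lemma~\ref{lem:simplification_trivial1}: a restriction $\wc^{\delta}\langle E\rangle(A)$ is empty as soon as there exist $\pi \in E$ and $\sigma \in A$ with $\pi \preccurlyeq \sigma$. The plan is a short direct argument by contradiction, relying only on the definition of a restriction (Definition~\ref{dfn:restriction}) and on transitivity of the pattern order $\preccurlyeq$.

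First I would suppose, for contradiction, that $\wc^{\delta}\langle E\rangle(A)$ is nonempty, and pick some $\tau$ in this set. By Definition~\ref{dfn:restriction}, $\tau$ must avoid every pattern of $E$ and contain every pattern of $A$. In particular, since $\sigma \in A$, we have $\sigma \preccurlyeq \tau$; and since $\pi \in E$, we have $\pi \not\preccurlyeq \tau$. On the other hand, by hypothesis $\pi \preccurlyeq \sigma$, and $\preccurlyeq$ is a partial order on permutations (in particular transitive), so from $\pi \preccurlyeq \sigma$ and $\sigma \preccurlyeq \tau$ we deduce $\pi \preccurlyeq \tau$. This contradicts $\pi \not\preccurlyeq \tau$, so no such $\tau$ can exist, i.e.\ $\wc^{\delta}\langle E\rangle(A) = \emptyset$.

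There is essentially no obstacle here: the only ingredient beyond the definition is transitivity of $\preccurlyeq$, which is recalled right after Definition~\ref{def:subsequence} when it is stated that the pattern containment relation is a partial order. The one point worth a word of care is that the conclusion holds for all three values $\delta \in \{~, +, -\}$ uniformly, since $\wc^{\delta}\langle E\rangle(A) \subseteq \wc^{\delta}$ in each case and the avoidance/containment conditions that force the contradiction do not depend on $\delta$; alternatively, one notes $\wc^{\delta}\langle E\rangle(A) \subseteq \wc\langle E\rangle(A)$, so it suffices to treat $\delta = {}$, but the direct argument above already covers every case at once. Hence the lemma follows, and it legitimizes applying the emptiness simplification of Proposition~\ref{prop:simplification_trivial2} whenever this criterion is met.
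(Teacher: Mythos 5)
Your proof is correct: the paper states this lemma without proof (treating it as immediate), and your argument via transitivity of $\preccurlyeq$ — any $\tau$ in the restriction would contain $\sigma$, hence contain $\pi \preccurlyeq \sigma$, contradicting avoidance of $\pi$ — is exactly the intended justification. Your remark that the argument is uniform in $\delta$ is a fine extra precaution but not needed beyond what you already wrote.
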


\begin{lem}\label{lem:simplification}
Let $\delta \in \{~, +, -\}$  and $E_1$, $E_2$, $A_1$ and $A_2$ be any sets of permutations.
Then $\wc^{\delta}\langle E_1\rangle(A_1) \subseteq \wc^{\delta}\langle E_2\rangle(A_2)$ as soon as:
\begin{itemize}
\item for each $\pi \in E_2$, there exists $\tau \in E_1$ such that $\tau \preccurlyeq \pi$, and

\item for each $\pi \in A_2$, there exists $\tau \in A_1$ such that $\pi \preccurlyeq \tau$.
\end{itemize}
\end{lem}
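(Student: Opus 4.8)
\textbf{Plan for the proof of Lemma~\ref{lem:simplification}.}
The statement has two symmetric halves (one about avoidance sets $E_i$, one about containment sets $A_i$), and the plan is to prove that both hypotheses together force $\wc^{\delta}\langle E_1\rangle(A_1) \subseteq \wc^{\delta}\langle E_2\rangle(A_2)$ by a direct membership argument. First I would take an arbitrary $\sigma \in \wc^{\delta}\langle E_1\rangle(A_1)$; by definition $\sigma \in \wc^{\delta}$, $\sigma$ avoids every pattern of $E_1$, and $\sigma$ contains every pattern of $A_1$. It remains to show $\sigma$ avoids every $\pi \in E_2$ and contains every $\pi \in A_2$, which will place $\sigma$ in $\wc^{\delta}\langle E_2\rangle(A_2)$.

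For the avoidance part, fix $\pi \in E_2$. By the first hypothesis there is $\tau \in E_1$ with $\tau \preccurlyeq \pi$. Since $\sigma$ avoids $\tau$, and since any permutation containing $\pi$ would (by transitivity of $\preccurlyeq$, from $\tau \preccurlyeq \pi \preccurlyeq \sigma$) also contain $\tau$, we conclude that $\sigma$ avoids $\pi$. Symmetrically, for the containment part, fix $\pi \in A_2$; by the second hypothesis there is $\tau \in A_1$ with $\pi \preccurlyeq \tau$. Since $\sigma$ contains $\tau$, i.e.\ $\tau \preccurlyeq \sigma$, transitivity gives $\pi \preccurlyeq \tau \preccurlyeq \sigma$, so $\sigma$ contains $\pi$. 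This is exactly the observation already used in the proof of Proposition~\ref{prop:simplification_restriction} (``if a permutation avoids $\pi$ then it avoids all patterns containing $\pi$; if it contains $\pi$ then it contains all patterns contained in $\pi$''), now applied with different elements on the two sides of the comparison.

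Having checked both conditions, $\sigma \in \wc^{\delta}\langle E_2\rangle(A_2)$, and since $\sigma$ was arbitrary the inclusion follows. I do not expect any real obstacle here: the only ingredients are the definition of a restriction (Definition~\ref{dfn:restriction}) and transitivity of the pattern order $\preccurlyeq$, which was noted to be a partial order right after its definition. The one point worth stating carefully is that the condition ``$\sigma \in \wc^{\delta}$'' is preserved trivially because $E_1,E_2,A_1,A_2$ only constrain which patterns $\sigma$ avoids or contains, not membership in $\wc^{\delta}$ itself; in particular the same $\delta$ appears on both sides, so no $\oplus$- or $\ominus$-indecomposability issue arises.
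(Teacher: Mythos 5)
Your proof is correct and follows exactly the same route as the paper's: take an arbitrary $\sigma$ in $\wc^{\delta}\langle E_1\rangle(A_1)$, then use transitivity of $\preccurlyeq$ to transfer each avoidance constraint from some $\tau\in E_1$ up to $\pi\in E_2$ and each containment constraint from some $\tau\in A_1$ down to $\pi\in A_2$. Nothing to add.
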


\begin{proof}
Assume the two conditions of the statement are satisfied.
Consider $\sigma \in \wc^{\delta}\langle E_1\rangle(A_1)$, and let us 
prove that $\sigma \in \wc^{\delta}\langle E_2\rangle(A_2)$. 

Let $\pi \in E_2$. By assumption, there exists $ \tau \in E_1$ such that $\tau \preccurlyeq \pi$.
Since $\sigma \in \wc^{\delta}\langle E_1\rangle$, 
$\sigma$ avoids $\tau$. 
From $\tau \preccurlyeq \pi$, we conclude that $\sigma$ also avoids $\pi$. 
Therefore $\sigma \in \wc^{\delta}\langle E_2\rangle$.

Similarly, let $\pi \in A_2$. There exists $\tau \in A_1$ such that $\pi \preccurlyeq \tau$.
Since $\sigma \in \wc^{\delta}\langle E_1\rangle(A_1)$, 
we know that $\sigma$ contains $\tau$. Consequently, $\sigma$ also contains $\pi$. 
Hence $\sigma \in \wc^{\delta}\langle E_2\rangle(A_2)$.
\end{proof}

Note that the condition given by Lemma~\ref{lem:simplification} is sufficient, but it is not necessary. 
Indeed for $E_1 = \{34152\}$, $A_1=\{364152\}$,
we have $\wc^{\delta}\langle E_1\rangle(A_1) = \emptyset$.
In particular
$\wc^{\delta}\langle E_1\rangle(A_1) \subseteq \wc^{\delta}\langle 123\rangle(132)$
even though the first condition of Lemma~\ref{lem:simplification} is not satisfied.
We may however wonder if the condition of Lemma~\ref{lem:simplification} could be necessary 
under the assumption that $E_1$, $E_2$, $A_1$ and $A_2$ satisfy additional conditions, 
like $\wc^{\delta}\langle E_1\rangle(A_1)$ being non-empty and $E_2$ and $A_2$ being antichains. 

Similarly, the condition given by Lemma~\ref{lem:simplification_trivial1} is also sufficient, but it is not necessary. 
Indeed $\wc^{\delta}\langle 132, 213, 231, 312\rangle(12,21)$ is empty
even though the condition of Lemma~\ref{lem:simplification_trivial1} is not satisfied.

\smallskip

Any of the simplifications explained above can be performed on the fly, each time a new restriction term appears while running the procedure~\textsc{Specification}. 
This allows to compute systems of equations significantly more compact than the raw output of our algorithm. 
We do not claim that these simplifications constitute an exhaustive list of all possible simplifications. 
But Propositions~\ref{prop:simplification_restriction} to~\ref{prop:simplification_union} and Lemmas~\ref{lem:simplification_trivial1} and \ref{lem:simplification} 
cover all simplifications that are performed in our implementation of~\textsc{Specification}, and in the examples of Section~\ref{sec:ex}. 

\section{Examples}
\label{sec:ex}

We apply the method described in this article to find a combinatorial specification for three permutation classes. 
The first two are very small and easy examples,
and are the examples of non-substitution-closed permutation classes dealt with in~\cite{BHV08a} (Examples 4.3 and 4.4 of~\cite{BHV08a}). 
They use only a fraction of the machinery we have developed, but 
have been chosen to illustrate some of the differences between our method and the one of~\cite{BHV08a}. 
The third example is more involved, is meant to illustrate all of the auxiliary procedures defined,
and shows better what our methodology brings.

\subsection{Two examples from~\cite{BHV08a}}
\label{ssec:petits_ex}

\subsubsection{The class $\C = Av(132)$}

First, we note that \C is not substitution-closed (indeed, the excluded pattern $132$ is not simple). 
Next, we remark that \C is a subclass of the separable permutations, 
that is to say it contains no simple permutations. (Equivalently, only nodes $\oplus$ and $\ominus$ appear in the decomposition trees.) 
Alternatively, this could be determined from steps 1 and 2 of the algorithmic chain presented in Section~\ref{subsec:chain}.  

Because \C is not substitution-closed, a specification for \C cannot be simply derived as in Section~\ref{sec:substitution-closed}, 
but can be obtained as the result of our algorithm~\textsc{Specification}. 
More precisely, the first step is to call \textsc{EqnForRestriction}$(\wc,\{132\},\emptyset)$
(which works exactly like \textsc{EqnForClass} on an input where the last part is $\emptyset$).
It starts with Equation \eqref{eqn:Wc1} of Proposition~\ref{prop:sys_wc} (p.\pageref{prop:sys_wc}), 
describing the substitution closure $\wc$ of \C, which rewrites in our case as:
\[
\wc =  {1}\ \uplus\ \oplus[\wc^{+},\wc]\ \uplus\  \ominus[\wc^{-},\wc]. 
\]
Then, we propagate into each term of this equation the constraint of avoiding the pattern $132$,
using the embeddings of $132$ into $\oplus$ and $\ominus$.
There is only one non-trivial embedding in this case, corresponding to $132 = \oplus[1,21]$.
This results in the equation 
\[
 \C = \wc\langle 132 \rangle = 1  \cup \oplus[ \wc^+\langle 132 \rangle,  \wc\langle 21 \rangle] \cup \ominus[ \wc^-\langle 132 \rangle,  \wc\langle 132 \rangle].
\]
Because this equation is already non-ambiguous, calling the disambiguation procedure~\textsc{Disambiguate} does not modify it 
(except for changing the union symbols $\cup$ into disjoint union symbols $\uplus$). 
This is the first equation of our specification for $\C$. 

This equation contains right-only restrictions, like $\wc^+\langle 132 \rangle$, $\wc\langle 21 \rangle$, \ldots 
To complete the specification, \textsc{Specification} next computes an equation for each of them, in the same fashion as above, 
that is to say running \textsc{EqnForRestriction} followed by \textsc{Disambiguate}. 
All right-only restrictions that appear are processed iteratively in the same way by~\textsc{Specification}, until none is left. 
Termination is guaranteed since the number of possible restrictions considered is finite (see Section~\ref{ssec:proof_main_Th}). 

The final result of calling~\textsc{Specification} on $\C = Av(132)$ is the following specification: 
\begin{align*}
\C = \wc\langle 132 \rangle & = 1 \uplus \oplus[ \wc^+\langle 132 \rangle,  \wc\langle 21 \rangle] \uplus \ominus[ \wc^-\langle 132 \rangle,  \wc\langle 132 \rangle] \\
\wc^+\langle 132 \rangle & = 1 \uplus \ominus[ \wc^-\langle 132 \rangle,  \wc\langle 132 \rangle] \\
\wc \langle 21 \rangle & = 1 \uplus \oplus[ \wc^+\langle 21 \rangle,  \wc\langle 21 \rangle] \\
\wc^-\langle 132 \rangle & = 1 \uplus \oplus[ \wc^+\langle 132 \rangle,  \wc\langle 21 \rangle] \\
\wc^+ \langle 21 \rangle & = 1.
\end{align*}
An important remark is that this is exactly the result produced by our general method, 
without applying any \emph{ad hoc} argument about this specific $\C$. 
Note that this specification is actually also the result of applying~\textsc{AmbiguousSystem} to $\C$, 
which happens to be a non-ambiguous system already in this case.

\medskip

We now turn to the specifications for $\C = Av(132)$ given in~\cite{BHV08a}. 
A specification for $\C$ may be derived following the proofs of~\cite{BHV08a}. 
As reminded in Section~\ref{subsec:QCS}, the proof of their Lemma 2.1 shows that the
set $\cP_\C = \{Av(12), Av(21), Av(132), \mathcal{I}_\oplus, \mathcal{I}_\ominus\}$ is query-complete,
where $\mathcal{I}_\oplus$ (resp.~$\mathcal{I}_\ominus$) is the set of $\oplus$-indecomposable (resp.~$\ominus$-indecomposable) permutations.
Starting from this query-complete set, the proof of their Theorem 1.1 shows that there exists a specification for \C having $1+2^4=17$ equations.
To obtain this specification, we would need to compute the sets $E_{\X,\si}$ defined in Section~\ref{subsec:QCS},
which is not addressed in general in~\cite{BHV08a}.

This specification is however not the one presented in Example 4.3 of~\cite{BHV08a}.
Using \emph{ad hoc} arguments and constructions, this example shows that 
the set $\mathcal{P} = \{Av(21), Av(132), \mathcal{I}_\oplus, \mathcal{I}_\ominus\}$ is query-complete. 
This allows to derive a specification for $\C$ with significantly fewer equations, namely,  
\begin{align*}
\C & = \C_{Av(21), \mathcal{I}_\oplus, \mathcal{I}_\ominus} \uplus \C_{Av(21), \mathcal{I}_\ominus} \uplus 
\C_{\mathcal{I}_\oplus} \uplus \C_{\mathcal{I}_\ominus} \\
\C_{Av(21), \mathcal{I}_\oplus, \mathcal{I}_\ominus} & = 1 \\
\C_{Av(21), \mathcal{I}_\ominus}& = \oplus[\C_{Av(21), \mathcal{I}_\oplus, \mathcal{I}_\ominus} \,,\, \C_{Av(21), \mathcal{I}_\oplus, \mathcal{I}_\ominus} \uplus \C_{Av(21), \mathcal{I}_\ominus}] \\
\C_{\mathcal{I}_\oplus} & =\ominus[ \C_{Av(21), \mathcal{I}_\oplus, \mathcal{I}_\ominus} \uplus \C_{Av(21), \mathcal{I}_\ominus} \uplus \C_{\mathcal{I}_\ominus} \,,\,
\C_{Av(21), \mathcal{I}_\oplus, \mathcal{I}_\ominus} \uplus \C_{Av(21), \mathcal{I}_\ominus} \uplus \C_{\mathcal{I}_\oplus} \uplus \C_{\mathcal{I}_\ominus} ]\\
\C_{\mathcal{I}_\ominus} & = \oplus[\C_{\mathcal{I}_\oplus} \,,\, \C_{Av(21), \mathcal{I}_\oplus, \mathcal{I}_\ominus} \uplus \C_{Av(21), \mathcal{I}_\ominus}].
\end{align*}
In the system above, for $\X \subset \mathcal{P}' = \{Av(21), \mathcal{I}_\oplus, \mathcal{I}_\ominus\}$, 
the set $\C_\X$ represents permutations of \C satisfying all properties in $\X$ and none in $\mathcal{P}' \setminus \X$. 
Note that the specification above is not exactly the one that would be obtained from $\mathcal{P}$ applying to the letter the proof of~\cite[Theorem 1.1]{BHV08a}, 
which would have $1+2^3 = 9$ equations,
since the empty terms like $\C_{\mathcal{I}_\oplus, \mathcal{I}_\ominus}$ or $\C_{Av(21), \mathcal{I}_\oplus}$ have been removed. 

This specification has the same number of equations as the one we obtained. 
However, ours was derived as the result of a general and algorithmic method, 
whereas that of~\cite{BHV08a} made use of \emph{ad hoc} arguments specific to $\C$.

\medskip

Finally, note that in both our construction and the specification given by~\cite[Example 4.3]{BHV08a}, 
the constraint of avoiding the pattern $21$ appears because of the embedding of $132$ into $\oplus$ (corresponding to $132 = \oplus[1,21]$), 
which is the only non-trivial embedding in this example. 
However, our specification and the one of~\cite{BHV08a} are not the same.
For example, \cite{BHV08a} has an equation for $\C_{\mathcal{I}_\ominus} = \C^-(21)$ 
whereas we do not have an equation for this set, but for $\wc^-\langle 132 \rangle = \C^-$.
Note that beyond the difference of notation, the equations of~\cite{BHV08a} and ours have a different form.
For example, our equation for $\wc^+\langle 132 \rangle = \C^+$ correspond to the equation of~\cite{BHV08a} for 
$\C_{\mathcal{I}_\oplus} = \C^+(21) = \C^+ \setminus \{1\}$,
but since we look at a coarser level, we express this later set as
$\ominus[ \wc^-\langle 132 \rangle,  \wc\langle 132 \rangle]$
while it is expressed in~\cite{BHV08a} as
$\ominus[ \C_{Av(21), \mathcal{I}_\oplus, \mathcal{I}_\ominus} \uplus \C_{Av(21), \mathcal{I}_\ominus} \uplus \C_{\mathcal{I}_\ominus} \,,\,
\C_{Av(21), \mathcal{I}_\oplus, \mathcal{I}_\ominus} \uplus \C_{Av(21), \mathcal{I}_\ominus} \uplus \C_{\mathcal{I}_\oplus} \uplus \C_{\mathcal{I}_\ominus} ]$.

\subsubsection{The class $\C = Av(2413,3142,2143)$} 

This second example is another non-substitution-closed subclass of the set of separable permutations. 
A specification for $\C$ can again be obtained using our algorithm~\textsc{Specification}. 
The starting point is Equation \eqref{eqn:Wc1} of Proposition~\ref{prop:sys_wc} (p.\pageref{prop:sys_wc}), 
in which the $2143$-avoidance constraint has been propagated by \textsc{EqnForRestriction} (or equivalently \textsc{EqnForClass} in this simple case). 
This equation is
\[
\C = \wc\langle 2143 \rangle =  1 \cup \oplus[ \wc^+\langle 2143 \rangle,  \wc\langle 21 \rangle] \cup \oplus[ \wc^+\langle 21 \rangle,  \wc\langle 2143 \rangle] 
\cup \ominus[ \wc^-\langle 2143 \rangle,  \wc\langle 2143 \rangle], 
\]
which is ambiguous. 
Indeed, the two terms $\oplus[ \wc^+\langle 21 \rangle,  \wc\langle 2143 \rangle]$ and $\oplus[ \wc^+\langle 2143 \rangle,  \wc\langle 21 \rangle]$ 
have a non-empty intersection (for instance, $12$ belongs to both terms). 
The reason for this ambiguity is the embedding corresponding to $2143=\oplus[21,21]$, 
which has two components different from $\varepsilon$ and $1$. 
(This is the only non-trivial embedding for the class \C considered.) 

\medskip

Calling the procedure~\textsc{Disambiguate} on the above equation (and performing the simplifications of Section~\ref{ssec:simplifications}), \textsc{Specification} 
obtains the first equation of a specification for $\C$: 
\begin{align*}
\C = & \ \wc\langle 2143 \rangle = 1  \uplus \oplus[ \wc^+\langle 2143 \rangle (21),  \wc\langle 21 \rangle] \uplus \oplus[ \wc^+\langle 21 \rangle,  \wc\langle 2143 \rangle(21)] \uplus \oplus[ \wc^+\langle 21 \rangle,  \wc\langle 21 \rangle] \\
& \hspace*{2cm} \ \uplus \ominus[ \wc^-\langle 2143 \rangle,  \wc\langle 2143 \rangle].
\end{align*}
Note that restrictions with pattern containment constraints, like $\wc^+\langle 2143 \rangle (21)$, have appeared when running~\textsc{Disambiguate}. 
The full specification is then obtained calling iteratively the procedures \textsc{EqnForRestriction} and \textsc{Disambiguate} (with simplifications) 
on all right-only restrictions. 
The final result produced by~\textsc{Specification} for $\C= Av(2413,3142,2143)$ is then 
{\small 
\begin{eqnarray*}
\C = \wc\langle 2143 \rangle & = & 1  \uplus \oplus[ \wc^+\langle 2143 \rangle (21),  \wc\langle 21 \rangle] \uplus \oplus[ \wc^+\langle 21 \rangle,  \wc\langle 2143 \rangle(21)] \uplus \oplus[ \wc^+\langle 21 \rangle,  \wc\langle 21 \rangle]  \label{eqn_ex_21} \\
& ~ & \ \ \uplus \ominus[ \wc^-\langle 2143 \rangle,  \wc\langle 2143 \rangle] \\ 
\wc^+\langle 2143 \rangle (21) & = & \ominus[ \wc^-\langle 2143 \rangle,  \wc\langle 2143 \rangle] \\ 
\wc \langle 21 \rangle & = & 1 \uplus \oplus[ \wc^+\langle 21 \rangle,  \wc\langle 21 \rangle] \label{eqn_ex_24} \\
\wc^+ \langle 21 \rangle & = & 1  \label{eqn_ex_25} \\
\wc\langle 2143 \rangle (21) & = & \oplus[ \wc^+\langle 2143 \rangle (21),  \wc\langle 21 \rangle] \uplus \oplus[ \wc^+\langle 21 \rangle,  \wc\langle 2143 \rangle(21)] 
\uplus \ominus[ \wc^-\langle 2143 \rangle,  \wc\langle 2143 \rangle] \label{eqn_ex_26} \\
\wc^-\langle 2143 \rangle & = &  1 \uplus 
\oplus[ \wc^+\langle 21 \rangle,  \wc\langle 21 \rangle] \uplus \oplus[ \wc^+\langle 2143 \rangle (21),  \wc\langle 21 \rangle] \uplus \oplus[ \wc^+\langle 21 \rangle,  \wc\langle 2143 \rangle(21)]. 
\end{eqnarray*}}

This specification can again be compared with those obtained from~\cite{BHV08a}.
The proofs therein show that the set
$$\cP_\C = \{Av(12), Av(21), Av(132), Av(213), Av(231), Av(312), Av(2413), Av(3142), Av(2143), \mathcal{I}_\oplus, \mathcal{I}_\ominus\}$$
is query-complete, so that there exists a specification for \C having $1+2^8=257$ equations.

Example 4.4 of~\cite{BHV08a} however provides a specification with fewer equations. 
It is derived from the query-complete set $\mathcal{P} = \{Av(21), Av(2143), \mathcal{I}_\oplus, \mathcal{I}_\ominus\}$.
There is however no hint of how this set was found, and especially not of a general method which would have resulted in this query-complete set. 
The specification for $\C$ that follows from this set through the proof of~\cite[Theorem 1.1]{BHV08a} would then have $1+2^3 = 9$ equations. 
Getting rid of empty terms, the specification for \C obtained from this set which is given in~\cite[Example 4.4]{BHV08a} is:
\begin{align*}
\C & = \C_{Av(21), \mathcal{I}_\oplus, \mathcal{I}_\ominus} \uplus \C_{Av(21), \mathcal{I}_\ominus} \uplus 
\C_{\mathcal{I}_\oplus} \uplus \C_{\mathcal{I}_\ominus} \\
\C_{Av(21), \mathcal{I}_\oplus, \mathcal{I}_\ominus} & = 1 \\
\C_{Av(21), \mathcal{I}_\ominus}& = \oplus[\C_{Av(21), \mathcal{I}_\oplus, \mathcal{I}_\ominus} \,,\,  \C_{Av(21),\mathcal{I}_\oplus, \mathcal{I}_\ominus} \uplus \C_{Av(21), \mathcal{I}_\ominus}] \\
\C_{\mathcal{I}_\oplus} & = \ominus[\C_{Av(21), \mathcal{I}_\oplus, \mathcal{I}_\ominus} \uplus \C_{Av(21), \mathcal{I}_\ominus} \uplus \C_{\mathcal{I}_\ominus}  \,,\,  
\C_{Av(21), \mathcal{I}_\oplus, \mathcal{I}_\ominus} \uplus \C_{Av(21), \mathcal{I}_\ominus} \uplus \C_{\mathcal{I}_\oplus} \uplus \C_{\mathcal{I}_\ominus}]\\
\C_{\mathcal{I}_\ominus} & = \oplus[\C_{Av(21), \mathcal{I}_\oplus, \mathcal{I}_\ominus} \,,\,   \C_{\mathcal{I}_\oplus} \uplus \C_{\mathcal{I}_\ominus}]
\uplus \oplus[\C_{\mathcal{I}_\oplus}  \,,\, \C_{Av(21), \mathcal{I}_\oplus, \mathcal{I}_\ominus} \uplus \C_{Av(21), \mathcal{I}_\ominus}].
\end{align*}
In the system above, for $\X \subset \mathcal{P}' = \{Av(21), \mathcal{I}_\oplus, \mathcal{I}_\ominus\}$, 
the set $\C_\X$ represents permutations of \C satisfying all properties in $\X$ and none in $\mathcal{P}' \setminus \X$. 
\smallskip

In this case, our specification obtained as the result of a general method has one
more equation than the one of~\cite{BHV08a} obtained using \emph{ad hoc} constructions. 

\medskip

The non-substitution closed classes addressed in the examples of~\cite{BHV08a} have no simple permutations.
We now turn to the study of a more involved example.

\subsection{The class $\C = Av(1243, 2341, 2413, 41352, 531642)$}
\label{ssec:grand_ex}

With this third example, we apply the method described in this article in a more complicated case. 
Namely, we derive a combinatorial specification for the permutation class $\C = Av(B)$ 
where $B=\{1243, 2341, 2413, 41352, 531642\}$.\footnote{The reader interested in more details about this example can find them in~\cite[Section 3.5]{theseAdeline}.}
From this description we derive its generating function and furthermore generate at random large permutations of the class. 
We follow the different steps described in the diagram of Figure~\ref{fig:schema2} (p.\pageref{fig:schema2}). 

First, note that $Av(B)$ is not substitution-closed as $1243$ and $2341$ are non-simple permutations and belong to the basis of the class. 
Then we test whether the class contains a finite number of simple permutations,
and if it is the case, we compute the set $\s_{\C}$ of simple permutations in \C.
In our case, there is only one simple permutation in \C: $\s_{\C} =\{3142\}$.

Now we have all we need to run our algorithm~\textsc{Specification},
or our algorithm~\textsc{AmbiguousSystem} if we want to compute the generating function using inclusion-exclusion.
Their first step is to compute an equation for \C.

\medskip

\subsubsection{An equation for \C} ~ 

\noindent {\it An equation for the substitution-closure $\wc$ of \C.} 
An equation for the substitution closure $\wc$ of \C immediately follows from $\s_{\C}$. 
Specifically, Equation \eqref{eqn:Wc1} of Proposition~\ref{prop:sys_wc} (p.\pageref{prop:sys_wc}) is in our case:
\begin{equation}
\wc =  {1}\ \uplus\ \oplus[\wc^{+},\wc]\ \uplus\  \ominus[\wc^{-},\wc]\ \uplus\ 3142[\wc,\wc,\wc,\wc]. \label{eq:C} \\
\end{equation}

\noindent {\it From $\wc$ to $\C = Av(B)$.}
Since $\C = \wc \langle \Bstar \rangle$ where $\Bstar$ denotes the non-simple permutations of $B$,
the algorithm~\textsc{AmbiguousSystem} calls \textsc{EqnForClass}$( \wc,\Bstar)$.
Similarly, the algorithm~\textsc{Specification} calls \textsc{EqnForRestriction}$( \wc,\Bstar,\emptyset)$,
which works exactly like \textsc{EqnForClass}$( \wc,\Bstar)$.
Starting from Equation~\eqref{eq:C}, it consists in adding the non-simple pattern avoidance constraints imposed by the avoidance of $B$.
In our case, $1243$ and $2341$ are the only two non-simple patterns in $B$, 
and we compute an equation for $\C = \wc\langle 1243,2341\rangle$ by adding these two constraints one after the other.

\smallskip

To compute $\wc\langle1243\rangle$, we propagate the constraint of avoiding $1243$ into each term of Equation~\eqref{eq:C} using the embeddings of $\gamma = 1243$ into $\oplus$, $\ominus$ and $3142$. 
This gives the following equation: 
\noindent
\begin{align}
\wc\langle 1243 \rangle     
=  {1}\ &\cup\ \oplus[\wc^{+}\langle 12 \rangle,\wc\langle 132 \rangle]\ \cup\ \oplus[\wc^{+}\langle 1243 \rangle,\wc\langle 21 \rangle]\ \cup\ \ominus[\wc^{-}\langle 1 2 4 3 \rangle , \wc\langle1 2 4 3 \rangle] \label{eq:12431}\\
&\cup  3 1 4 2 [\wc\langle1 2 4 3 \rangle , \wc\langle1 2 \rangle , \wc\langle2 1 \rangle , \wc\langle1 3 2 \rangle] \cup 3 1 4 2[\wc\langle1 2 \rangle , \wc\langle1 2 \rangle , \wc\langle1 3 2 \rangle , \wc\langle1 3 2 \rangle]. \nonumber
\end{align}

As explained in Section~\ref{ssec:simplifications}, to obtain the above equation, two types of simplifications 
have been performed. 
First, when a union contains two terms such that one is strictly included in the other, then the smaller term has been removed (see Proposition~\ref{prop:simplification_union}). 
Second, when two excluded patterns are such that the avoidance of one implies the avoidance of the other, then the larger one has been removed (see Proposition~\ref{prop:simplification_restriction}) -- for instance we have simplified $\wc\langle1243,12\rangle$ into $ \wc\langle12\rangle$. 

\smallskip

Next we add the constraint of avoiding $2341$ in the above equation as $\wc\langle 1243 \rangle\langle 2341 \rangle $ $ = \wc\langle1243, 2341\rangle$.
We propagate the constraint  $2341$ into the $5$ different terms that appear in Equation~\eqref{eq:12431}, and perform simplifications as described above.
The output of \textsc{EqnForClass}$( \wc,\Bstar)$ or \textsc{EqnForRestriction}$( \wc,\Bstar,\emptyset)$ is then:
\begin{align}
\wc\langle1 2 4 3 ,2 3 4 1 \rangle\ =\ 1\ 
	&\cup\ \oplus [\wc^{+}\langle1 2 4 3 ,2 3 4 1 \rangle , \wc\langle2 1 \rangle] 
	\ \cup\ \oplus [\wc^{+}\langle1 2 \rangle , \wc\langle1 3 2 ,2 3 4 1 \rangle] \label{eq:firstline}\\
	&\cup\ \ominus [\wc^{-}\langle1 2 3 \rangle , \wc\langle1 2 4 3 ,2 3 4 1 \rangle] 
	\ \cup\ 3 1 4 2 [\wc\langle1 2 \rangle , \wc\langle1 2 \rangle , \wc\langle1 2 \rangle , 	\wc\langle1 3 2 ,2 3 4 1 \rangle]. \nonumber
\end{align}

If we run \textsc{AmbiguousSystem}, this is exactly the first equation of the system.
This equation is ambiguous, since the intersection of 
$t_1 = \oplus [\wc^{+}\langle1 2 \rangle , \wc\langle1 3 2 ,2 3 4 1 \rangle] $ and 
$t_2 = \oplus [\wc^{+}\langle1 2 4 3 ,2 3 4 1 \rangle, \wc\langle2 1 \rangle] $ is 
$\oplus[\wc^{+}\langle1 2 \rangle , \wc\langle2 1 \rangle]$ which is not empty.

\medskip

\noindent {\it Disambiguation.}
If we run \textsc{Specification}, the equation is disambiguated using the procedure~\textsc{Disambiguate}.
We can write $(t_1 \cup t_2)$ as $(t_{1} \cap \bar{t_{2}}) \uplus (\bar{t_{1}} \cap t_{2}) \uplus (t_{1} \cap t_{2})$. 
The computation of the complement terms $\bar{t_{1}}$ and $\bar{t_{2}}$ (with Equation~\eqref{eq:ComplementTerm} p.\pageref{eq:ComplementTerm}) 
increases the number of terms in the disjoint union that describes $t_1 \cup t_2$, 
producing in this case a disjoint union of 15 terms. 
After eliminating terms that are empty or strictly included in another one, Equation~\eqref{eq:firstline} is replaced by:
\begin{small}
\begin{align}
\wc\langle1 2 4 3 ,2 3 4 1 \rangle = 1 
&\uplus \oplus [\wc^{+}\langle1 2 4 3 ,2 3 4 1 \rangle(1 2 ) , \wc\langle2 1 \rangle]  
 \uplus \oplus [\wc^{+}\langle1 2 \rangle , \wc\langle1 3 2 ,2 3 4 1 \rangle(2 1 )] 
 \uplus \oplus [\wc^{+}\langle1 2 \rangle , \wc\langle2 1 \rangle] \label{eq:C_non_ambigu}\\
&\uplus \ominus [\wc^{-}\langle1 2 3 \rangle , \wc\langle1 2 4 3 ,2 3 4 1 \rangle] 
	\uplus 3 1 4 2 [\wc\langle1 2 \rangle , \wc\langle1 2 \rangle , \wc\langle1 2 \rangle , \wc\langle1 3 2 ,2 3 4 1 \rangle]. \nonumber
\end{align}	
\end{small}
This is the final version of the first equation of the specification.

\medskip

\subsubsection{The whole system} ~ 

\noindent {\it AmbiguousSystem.}
To obtain a complete system, \textsc{AmbiguousSystem} 
starts with Equation~\eqref{eq:firstline} computed above. 
Using \textsc{EqnForClass}, it iterates the process of computing equations for new classes $\wc^{\delta}\langle E \rangle$ appearing on the right side of this equation, 
and it does it subsequently for any class appearing in one of these new equations.
As already mentioned, this process terminates since there is a finite number of subsets $E$ of patterns of permutations of $B$.

The result of \textsc{AmbiguousSystem} on \C is the following:
\begin{small}
\begin{align*}
\wc\langle1 2 4 3 ,2 3 4 1 \rangle\ =\ 1\ 
	&\cup\ \oplus [\wc^{+}\langle1 2 4 3 ,2 3 4 1 \rangle , \wc\langle2 1 \rangle] 
	\ \cup\ \oplus [\wc^{+}\langle1 2 \rangle , \wc\langle1 3 2 ,2 3 4 1 \rangle]\\
	&\cup\ \ominus [\wc^{-}\langle1 2 3 \rangle , \wc\langle1 2 4 3 ,2 3 4 1 \rangle] 
	\ \cup\ 3 1 4 2 [\wc\langle1 2 \rangle , \wc\langle1 2 \rangle , \wc\langle1 2 \rangle , 	\wc\langle1 3 2 ,2 3 4 1 \rangle] \nonumber\\		
\wc^{+}\langle1 2 4 3 ,2 3 4 1 \rangle\ = \ 1\ 
	&\cup\ \ominus [\wc^{-}\langle1 2 3 \rangle , \wc\langle1 2 4 3 ,2 3 4 1 \rangle] 
	\ \cup\ 3 1 4 2 [\wc\langle1 2 \rangle , \wc\langle1 2 \rangle , \wc\langle1 2 \rangle , 	\wc\langle1 3 2 ,2 3 4 1 \rangle] \nonumber \\		
\wc\langle2 1 \rangle\ =\ 1\ 
	&\cup\ \oplus [\wc^{+}\langle2 1 \rangle , \wc\langle2 1 \rangle]\nonumber \\		
\wc^{+}\langle1 2 \rangle\ = \ 1 \ & \nonumber \\		
\wc\langle1 3 2 ,2 3 4 1 \rangle\ =\ 1\ 
	&\cup\ \oplus [\wc^{+}\langle1 3 2 ,2 3 4 1 \rangle , \wc\langle2 1 \rangle] \ \cup\ \ominus [\wc^{-}\langle1 3 2 ,1 2 3 \rangle , \wc\langle1 3 2 ,2 3 4 1 \rangle] \nonumber \\		
\wc^{-}\langle1 2 3 \rangle\ = \ 1\ &
	\cup\ \oplus [\wc^{+}\langle1 2 \rangle , \wc\langle1 2 \rangle]
	\ \cup\ 3 1 4 2 [\wc\langle1 2 \rangle , \wc\langle1 2 \rangle , \wc\langle1 2 \rangle , \wc\langle1 2 \rangle]  \nonumber \\		
\wc\langle1 2 \rangle\ =\ 1\ 
	&\cup\ \ominus [\wc^{-}\langle1 2 \rangle , \wc\langle1 2 \rangle]\nonumber \\	
\wc^{+}\langle2 1 \rangle\ = \ 1\ & \nonumber \\	
\wc^{+}\langle1 3 2 ,2 3 4 1 \rangle\ = \ 1\ &
	\cup\ \ominus [\wc^{-}\langle1 3 2 ,1 2 3 \rangle , \wc\langle1 3 2 ,2 3 4 1 \rangle] \nonumber \\
\wc^{-}\langle1 3 2 ,1 2 3  \rangle\ = \ 1\ &
	\cup\ \oplus [\wc^{+}\langle1 2 \rangle , \wc\langle2 1 ,1 2 \rangle] \nonumber \\		
\wc^{-}\langle1 2 \rangle\ = \ 1\ & \nonumber \\		
\wc\langle21,12 \rangle\ = \ 1.\ & \nonumber 
\end{align*}
\end{small}
As noticed earlier, the first equation of the system is indeed ambiguous.
This system can be used to compute the generating function of \C using inclusion-exclusion.
Since we want not only to compute the generating function of \C, but also to generate at random uniform permutations of \C,
we rather compute a specification for \C.

\medskip

\noindent {\it Specification.}
The unambiguous equation for \C computed by \textsc{Specification} is Equation~\eqref{eq:C_non_ambigu}.
As noticed in Section~\ref{sec:disambiguate}, right-only terms, possibly involving pattern containment constraints like $\wc\langle1 3 2 ,2 3 4 1 \rangle(2 1)$, appear in this equation. 
These terms are not defined by our system, so we have to compute an equation for each of them (and iteratively so for right-only terms appearing in these new equations).
This is done by iterating~\textsc{EqnForRestriction}.
This procedure works in a fashion similar to \textsc{EqnForClass}, 
except that, in addition, it propagates pattern containment constraints in the subtrees.
For the term $\wc\langle1 3 2 ,2 3 4 1 \rangle(2 1)$, we describe how Algorithm \textsc{EqnForRestriction} computes an additional equation. 
We do not give details for the other right-only terms. 

First, starting from Equation~\eqref{eq:C}, we add the pattern avoidance constraints, namely $132$ and $2341$, and compute an equation for $\wc\langle1 3 2 ,2 3 4 1 \rangle$ like before.
We obtain the following equation:
{\small \begin{equation*}
\wc\langle1 3 2 ,2 3 4 1 \rangle = 1
\ \cup\ \oplus [\wc^{+}\langle1 3 2 ,2 3 4 1 \rangle , \wc\langle2 1 \rangle]
\ \cup\ \ominus [\wc^{-}\langle1 3 2 ,1 2 3 \rangle , \wc\langle1 3 2 ,2 3 4 1 \rangle]. 
\end{equation*}}

Then, we add the pattern containment constraints (here $21$) to this equation, considering embeddings of $21$ in $\oplus$ and $\ominus$.
The equation obtained after simplification is 
\[
\wc\langle1 3 2 ,2 3 4 1 \rangle(21) = 
\oplus [\wc^{+}\langle1 3 2 ,2 3 4 1 \rangle(2 1 ) , \wc\langle2 1 \rangle]
\ \cup\ \ominus [\wc^{-}\langle1 3 2 ,1 2 3 \rangle , \wc\langle1 3 2 ,2 3 4 1 \rangle].
\]
This equation is already unambiguous, so this is its final version.

\smallskip

The process is iterated  until each term that appear on the right hand side of an equation of the system is defined by an unambiguous equation, 
which finally leads to the following specification: 

\begin{footnotesize}
\begin{eqnarray*}
\wc\langle1 2 4 3 ,2 3 4 1 \rangle 
	&=& 1 
	\uplus\oplus [\wc^{+}\langle1 2 \rangle , \wc\langle1 3 2 ,2 3 4 1 \rangle(2 1)]   	
	\uplus \oplus [\wc^{+}\langle1 2 4 3 ,2 3 4 1 \rangle(12) , \wc\langle21\rangle]
	\uplus \oplus [\wc^{+}\langle1 2 \rangle , \wc\langle2 1 \rangle]\\
 	&\uplus & 	   
 	\ominus [\wc^{-}\langle1 2 3 \rangle , \wc\langle1 2 4 3 ,2 3 4 1 \rangle]   
 	\uplus 3 1 4 2 [\wc\langle1 2 \rangle , \wc\langle1 2 \rangle , \wc\langle1 2 \rangle , \wc\langle1 3 2 ,2 3 4 1 \rangle]\\
\wc^{+}\langle1 2 \rangle 
	&=& 1  \uplus\ominus [\wc^{-}\langle1 2 \rangle , \wc\langle1 2 \rangle]\\
\wc\langle1 3 2 ,2 3 4 1 \rangle(2 1 ) 
	&=&  
	\oplus [\wc^{+}\langle1 3 2 ,2 3 4 1 \rangle(2 1 ) , \wc\langle2 1 \rangle]
	\uplus \ominus [\wc^{-}\langle1 3 2 ,1 2 3 \rangle , \wc\langle1 3 2 ,2 3 4 1 \rangle]  \\
\wc^{+}\langle1 2 4 3 ,2 3 4 1 \rangle(1 2 ) 
	&=& 
	\ominus [\wc^{-}\langle1 2 3 \rangle(1 2 ) , \wc\langle1 2 4 3 ,2 3 4 1 \rangle(1 2 )] 
	\uplus \ominus [\wc^{-}\langle1 2 \rangle , \wc\langle1 2 4 3 ,2 3 4 1 \rangle(1 2 )] \\
 	&\uplus&  
 	\ominus [\wc^{-}\langle1 2 3 \rangle(1 2 ) , \wc\langle1 2 \rangle]   
 	\uplus 3 1 4 2 [\wc\langle1 2 \rangle , \wc\langle1 2 \rangle , \wc\langle1 2 \rangle , \wc\langle1 3 2 ,2 3 4 1 \rangle]\\
\wc\langle2 1 \rangle 
	&=& 1  \uplus\oplus [\wc^{+}\langle2 1 \rangle , \wc\langle2 1 \rangle]\\
\wc^{-}\langle1 2 3 \rangle 
	&=& 1   
	\uplus \oplus [\wc^{+}\langle1 2 \rangle , \wc\langle1 2 \rangle]
	\uplus 3 1 4 2 [\wc\langle1 2 \rangle , \wc\langle1 2 \rangle , \wc\langle1 2 \rangle , \wc\langle1 2 \rangle] \\
\wc\langle1 2 \rangle 
	&=& 1  
	\uplus\ominus [\wc^{-}\langle1 2 \rangle , \wc\langle1 2 \rangle]\\
\wc\langle1 3 2 ,2 3 4 1 \rangle 
	&=& 1     
	\uplus \oplus [\wc^{+}\langle1 3 2 ,2 3 4 1 \rangle , \wc\langle2 1 \rangle]
	\uplus\ominus [\wc^{-}\langle1 3 2 ,1 2 3 \rangle , \wc\langle1 3 2 ,2 3 4 1 \rangle]\\
\wc^{-}\langle1 2 \rangle
	 &=& 1\\
\wc^{+}\langle1 3 2 ,2 3 4 1 \rangle(2 1 ) 
	&=& 
	\ominus [\wc^{-}\langle1 3 2 ,1 2 3 \rangle , \wc\langle1 3 2 ,2 3 4 1 \rangle]\\
\wc^{-}\langle1 3 2 ,1 2 3 \rangle 
	&=& 1  	
	\uplus\oplus [\wc^{+}\langle1 2 \rangle , \wc\langle2 1 ,1 2 \rangle]\\
\wc^{-}\langle1 2 3 \rangle(1 2 ) 
	&=& 
	\oplus [\wc^{+}\langle1 2 \rangle , \wc\langle1 2 \rangle]
	\uplus 3 1 4 2 [\wc\langle1 2 \rangle , \wc\langle1 2 \rangle , \wc\langle1 2 \rangle , \wc\langle1 2 \rangle]  \\
\wc\langle1 2 4 3 ,2 3 4 1 \rangle(1 2 ) 
	&=&    
	\oplus [\wc^{+}\langle1 2 \rangle , \wc\langle1 3 2 ,2 3 4 1 \rangle(2 1 )] 
 	\uplus \oplus [\wc^{+}\langle1 2 \rangle , \wc\langle2 1 \rangle]   
 	\uplus \oplus [\wc^{+}\langle1 2 4 3 ,2 3 4 1 \rangle(1 2 ) , \wc\langle2 1 \rangle] \\
 	&\uplus&   
 	\ominus [\wc^{-}\langle1 2 3 \rangle(1 2 ) , \wc\langle1 2 \rangle] 
 	\uplus \ominus [\wc^{-}\langle1 2 \rangle , \wc\langle1 2 4 3 ,2 3 4 1 \rangle(1 2)] 
 	\\
 	&\uplus& 
 	\ominus [\wc^{-}\langle1 2 3 \rangle(1 2 ) , \wc\langle1 2 4 3 ,2 3 4 1 \rangle(1 2 )]
	\uplus 3 1 4 2 [\wc\langle1 2 \rangle , \wc\langle1 2 \rangle , \wc\langle1 2 \rangle , \wc\langle1 3 2 ,2 3 4 1 \rangle]\\
\wc^{+}\langle2 1 \rangle 
	&=& 1\\
\wc^{+}\langle1 3 2 ,2 3 4 1 \rangle 
	&=& 1  
	\uplus\ominus [\wc^{-}\langle1 3 2 ,1 2 3 \rangle , \wc\langle1 3 2 ,2 3 4 1 \rangle]\\
\wc\langle2 1 ,1 2 \rangle 
	&=& 1.
\end{eqnarray*}
\end{footnotesize}

The above specification contains $16$ equations. 
This should be compared to the number of equations in the specification that could be derived 
using the general method described in the proofs of~\cite{BHV08a} and reviewed in Section~\ref{subsec:QCS}
(and which would require to compute the sets $E_{\X,\si}$ defined in Section~\ref{subsec:QCS}).
The number of equations following this method would be \emph{a priori} $1+2^{k+2}$ where $k$ is the number of proper patterns of permutations in $B$ 
(in this case, $k=27$, giving $536\,870\,913$ equations), although we can expect many empty terms and hence fewer equations.

\subsubsection{Byproducts of the combinatorial specification}

The translation of the above specification into a system of equations for the generating function of $\C$ 
produces in this specific case a system that can be solved, giving access to a closed form for the generating function of \C: 

\[C(z) = \frac{(z^6-7z^5+20z^4-28z^3+20z^2-7z+1)z}{1-9z+32z^2-59z^3+62z^4-37z^5+13z^6-2z^7}.\]

We can also translate this specification into (Boltzmann or recursive) uniform random samplers of permutations in \C. 
We have used the recursive sampler\footnote{In this case (and in the one of Figure~\ref{fig:exemple_random}), we chose a recursive sampler over a Boltzmann one 
because it is more suitable when you want to sample many random permutations of the same medium size.} 
so obtained to uniformly generate permutations in \C. 
This is how Figure~\ref{fig:exemple900} (p.\pageref{fig:exemple900}) was obtained, 
as well as Figure~\ref{fig:exemple_random} (p.\pageref{fig:exemple_random}) for other examples of classes $\C$. 

\bigskip

\subsection*{Acknowledgments} 
We are grateful to the anonymous referees and to Mireille Bousquet-Mélou, 
whose comments have helped us make the relation between our work and~\cite{BHV08a} clearer. 

This work was completed with the support of the ANR project MAGNUM number 2010\_BLAN\_0204.

\bibliographystyle{alpha}

\end{document}